\newtheorem{exemple}{Example}
\numberwithin{equation}{section}
\author[1]{Nathan Couchet}
\affil[1]{Université Clermont Auvergne, CNRS, LMBP, F-63000 Clermont-Ferrand, France\\ \texttt{nathan.couchet@uca.fr}}
\author[2]{Robert Yuncken}
\affil[2]{Institut Élie Cartan de Lorraine, Site de Metz 3, rue Augustin Fresnel Technopole Metz 57000 Metz\\ \texttt{robert.yuncken@univ-lorraine.fr}}
\title{On polyhomogeneous symbols and the Heisenberg pseudodifferential calculus.}
\date{}
\newcommand{\R}{\mathbb{R}}
\newcommand{\V}{\mathcal{V}}
\newcommand{\m}{\medbreak} 
\newcommand{\bg}{\bigbreak}
\newcommand{\N}{\mathbb{N}}
\newcommand{\x}{\xi}
\newcommand{\supp}{\mathrm{supp}}
\newcommand{\Hn}{\mathbb{H}_{n}}
\newcommand{\Hom}{\mathrm{Hom}}
\newcommand{\Exp}{\mathbb{E}\mathrm{xp}^{\overline{X}} }
\newcommand{\TM}{\mathbb{T} M}
\newcommand{\THM}{\mathbb{T}_HM}
\newcommand{\kgt}{\tilde{\mathbbm{k}}}
\newcommand{\kg}{\mathbbm{k}}
\newcommand{\Gu}{G^{(0)}}
\newtheorem{theoreme}{Theorem}[section]
\newtheorem{proposition}[theoreme]{Proposition}
\newtheorem{lemma}[theoreme]{Lemma}
\theoremstyle{definition}
\newtheorem{definition}[theoreme]{Definition} 
\begin{document}

\maketitle

 
\textbf{\textsc{Keywords}}: pseudodifferential calculus, classical symbol, homogeneous Schwartz function, tangent groupoid of a Heisenberg manifold, filtered manifold, contact manifold, non-commutative geometry. 

\textbf{\textsc{MSC:}}  Primary: 47G30; secondary: 22A22, 35S05, 58H05, 58J40. 
 
\begin{abstract}

Polyhomogeneous symbols, defined by Kohn-Nirenberg \cite{Kohn1965PsiDOalgebra} and Hörmander \cite{hormander1985analysisIII} in the 60's, play a central role in the symbolic calculus of most pseudodifferential calculi.  We prove a simple characterisation of polyhomogeneous functions which avoids the use of asymptotic expansions.  Specifically, if $U$ is open subset of $\mathbb{R}^d$, then a polyhomogeneous symbol on $U\times\mathbb{R}^d$ is precisely the restriction to $t=1$ of a function on $U\times\mathbb{R}^{d+1}$ which is homogeneous for the dilations of $\mathbb{R}^{d+1}$ modulo Schwartz class functions.  This result holds for arbitrary graded dilations on the vector space $\mathbb{R}^d$.  As an application, using the generalisation $\mathbb{T}_HM$ of A. Connes' tangent groupoid for a filtered manifold $M$, we show that the Heisenberg calculus of Beals and Greiner \cite{Beals2016Heisenbergcalculus} on a contact manifold or a codimension $1$ foliation coincides with the groupoid calculus of \cite{Yuncken2019groupoidapproach}.

\end{abstract}

\section{Introduction: From a theorem on polyhomogeneous symbols to Beal and Greiner's pseudodifferential calculus on Heisenberg manifold.}
\label{sec:introduction}

The notion of a pseudodifferential operator is rather general.  The key idea, due to Kohn and Nirenberg, is to multiply the Fourier transform of a function by a symbol function, then apply the inverse Fourier transform.
In order for this procedure to be well-defined, and further to ensure that the class of pseudodifferential operators has nice properties such as stability under compositions and coordinate changes, one must impose strong restrictions on the class of symbols permitted.  

For this reason, one typically restricts to the class of polyhomogeneous symbols, or some generalization.
These are the symbols which admit an asymptotic expansion:
\[
  a(x,\xi) \sim \sum_{j=0}^\infty a_j(x,\xi),
\]
with each function $a_j$ being smooth and homogeneous in $\xi$ of order $m-j$ on $\R^n \times (\R^d \setminus\{0\})$,
see \cite[Definition 18.1.5]{hormander1985analysisIII}.   The notation $\sim$ here signifies that $a$ differs from the $N$th partial sum of the series by a symbol-class function of order $m-N-1$ outside of a neighbourhood of $\R^n\times\{0\}$.

The notion of polyhomogeneous symbol is powerful, but somewhat cumbersome.  
For instance, the proof that polyhomogeneous pseudodifferential operators are stable under smooth changes of coordinates is relatively technical, and it becomes a serious difficulty when one considers generalizations to more complicated calculi \cite{Beals2016Heisenbergcalculus, Taylor1984microlocalanalysis, melin1982lie}.

The simplest example of a polyhomogeneous symbol is a function which is polynomial in $\xi$, where we have an exact equality:
\begin{equation}
\label{eq:polynomial}
  a(x,\xi) = \sum_{j=0}^m a_j(x,\xi).
\end{equation}
We have a simple way of converting an arbitrary polynomial into a homogeneous polynomial by weighting each $a_j$ by an extra variable.  This can be stated as follows.

\begin{proposition}  \label{96}
Let $a\in C^\infty(\R^n \times\R^d)$.  The following are equivalent:
\begin{enumerate}
\item $a$ is an (inhomogeneous) polynomial in the variable $\xi$ of order $\leq m$. 
\item There exists $u \in  C^\infty(\R^n \times \R^d \times \R)$ a homogeneous polynomial of order $m$ in the variables $(\xi,t)$, such that $u(x,\xi,1)=a(x,\xi)$.
\end{enumerate}
\end{proposition}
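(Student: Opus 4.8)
The plan is to treat the two implications separately, each by an essentially explicit homogenization/dehomogenization recipe, using the graded dilation structure on $\R^d$ (though for a pure polynomial statement one can also just work degree by degree in the classical sense, then re-weight).

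For the implication $(2)\Rightarrow(1)$, I would simply observe that restricting a homogeneous polynomial to $t=1$ produces a polynomial in $\xi$. More precisely, write $u(x,\xi,t)=\sum_{|\alpha|+k=m} c_\alpha(x)\,\xi^\alpha t^k$ (in the graded sense, so that $|\alpha|$ denotes the weighted length of the multi-index $\alpha$ with respect to the given dilations, and the condition of homogeneity of weighted degree $m$ forces all monomials to satisfy $|\alpha|+k=m$). Setting $t=1$ gives $a(x,\xi)=\sum_{|\alpha|\le m} c_\alpha(x)\,\xi^\alpha$, which is a polynomial in $\xi$ of weighted order $\le m$. Smoothness in $x$ is inherited from $u$. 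This direction is routine.

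For the implication $(1)\Rightarrow(2)$, suppose $a(x,\xi)=\sum_{j=0}^m a_j(x,\xi)$ where each $a_j$ is the weighted-homogeneous part of $a$ of degree $j$, so $a_j(x,\x)$ is a finite sum of monomials $c_\alpha(x)\xi^\alpha$ with $|\alpha|=j$. Define
\[
  u(x,\xi,t) := \sum_{j=0}^m a_j(x,\xi)\, t^{m-j}.
\]
Then $u\in C^\infty(\R^n\times\R^d\times\R)$, since each $a_j$ is smooth and the sum is finite. By construction each term $a_j(x,\xi)t^{m-j}$ is weighted-homogeneous of degree $j+(m-j)=m$ in $(\xi,t)$, where $t$ carries weight $1$; hence $u$ is weighted-homogeneous of degree $m$ in $(\xi,t)$. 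Finally $u(x,\xi,1)=\sum_{j=0}^m a_j(x,\xi)=a(x,\xi)$, as required.

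The only point requiring a little care is the very first step of $(1)\Rightarrow(2)$: verifying that an (inhomogeneous) polynomial in $\xi$ of weighted order $\le m$ decomposes uniquely as a finite sum $\sum_{j=0}^m a_j$ of weighted-homogeneous pieces, with smooth coefficients. This is immediate because collecting monomials by their common weighted degree is a finite bookkeeping operation and does not disturb smoothness of the coefficient functions $c_\alpha(x)$. I do not anticipate a genuine obstacle here; the statement is a warm-up for the main theorem, where the same homogenization trick must be upgraded to handle the error terms lying in the Schwartz class rather than vanishing identically.
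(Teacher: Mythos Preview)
Your proof is correct and follows the same approach as the paper: homogenize by weighting each homogeneous component with the appropriate power of $t$. The paper's one-line proof writes $u(x,\xi,t)=\sum_{j=0}^m t^j a_j(x,\xi)$ with $a_j$ homogeneous of degree $m-j$, whereas you index $a_j$ by its degree $j$ and use $t^{m-j}$; these are the same construction up to relabelling, and you additionally spell out the trivial $(2)\Rightarrow(1)$ direction which the paper leaves implicit.
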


\begin{proof}
  With the notation of Equation \eqref{eq:polynomial}, it suffices to put $u(x,\xi) =  \sum_{j=0}^m t^j a_j(x,\xi)$.
\end{proof}

It turns out that there is an analogous theorem for polyhomogeneous functions.  We say that a function $u\in C^\infty(\R^n \times\R^d\times\R)$ is \emph{homogeneous modulo Schwartz} of order $m$ in $(\xi,t)$ if, for all $s\in\R^*_+$ the function
\(
  (x,\xi,t) \mapsto u(x,s\xi,st) - s^mu(x,\xi,t)  
\)
is Schwartz-class in the variables $(\xi,t)$ (see Definition \ref{8} for the precise definition).

\begin{theoreme}
\label{intro_polyhomogeneous}
Let $a\in C^\infty(\R^n \times\R^d)$.  The following are equivalent:
\begin{enumerate}
\item $a$ is a polyhomogeneous symbol of order $\leq m$.
\item There exists $u\in C^\infty(\R^n \times\R^d\times\R)$, homogeneous modulo Schwartz of order $m$ in the variables $(\xi,t)$, such that $u(x,\xi,1)=a(x,\xi)$.
\end{enumerate}
\end{theoreme}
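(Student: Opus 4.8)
The plan is to organise both implications around the \emph{forced homogeneous extension} of $a$, namely $\mathbf{a}(x,\xi,t):=t^m\,a(x,\delta_{1/t}\xi)$ for $t>0$. This is the unique function on $\R^n\times\R^d\times(0,\infty)$ that is genuinely homogeneous of degree $m$ for the dilations $(\xi,t)\mapsto(\delta_s\xi,st)$ and restricts to $a$ at $t=1$. The point of the theorem is then that condition (1) is exactly what is needed to glue $\mathbf{a}$ across the ``cone vertex'' $(\xi,t)=(0,0)$ into a function that is $C^\infty$ on all of $\R^n\times\R^d\times\R$ at the price of a Schwartz-class error, and conversely that any such gluing forces $a$ to be polyhomogeneous.

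For $(1)\Rightarrow(2)$, I would first show that polyhomogeneity of $a$ is equivalent to $\mathbf{a}$ extending smoothly from $\{t>0\}$ to $\{t\ge 0\}\cap\{\xi\neq 0\}$ with Taylor expansion $\mathbf{a}(x,\xi,t)\sim\sum_{j\ge 0}t^j\,a_{m-j}(x,\xi)$ at $t=0$: this is just the asymptotic expansion of $a$ re-expressed through $a_{m-j}(x,\delta_{1/t}\xi)=t^{-(m-j)}a_{m-j}(x,\xi)$, the symbol estimates on the remainders becoming Taylor-remainder estimates in $t$ (proved uniformly on the homogeneous unit sphere $\{|\xi|=1\}$ and then spread out by homogeneity). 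I would then set $u:=(1-\phi)\,\mathbf{a}+\phi\,a$, where $\phi\in C_c^\infty(\R^{d+1})$ is a bump equal to $1$ near $(\xi,t)=(0,0)$ and $a$ is read as a $t$-independent function (extending $\mathbf a$ across $t\le 0$ by a Borel summation, a technical point I set aside). The factor $1-\phi$ kills the vertex singularity of $\mathbf{a}$, so $u\in C^\infty$, and $u(x,\xi,1)=a(x,\xi)$ since $\mathbf{a}(x,\xi,1)=a(x,\xi)$. It remains to check that $u(x,\delta_s\xi,st)-s^m u(x,\xi,t)$ is Schwartz in $(\xi,t)$: because $\mathbf{a}$ is honestly homogeneous, this difference collapses to terms built from $(\text{cutoff}_s-\text{cutoff}_1)$ times $(\mathbf{a}-a)$, which are compactly supported in $(\xi,t)$ for $s$ in bounded sets, together with a tail term one controls via the $s\to\infty$ behaviour.

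For $(2)\Rightarrow(1)$, I would \emph{define} $a_{m-j}$ to be the homogeneous function of degree $m-j$ on $\R^d\setminus\{0\}$ whose restriction to $\{|\xi|=1\}$ is $\tfrac{1}{j!}\partial_t^j\big|_{t=0}u(x,\xi,t)$; this is smooth on $\R^d\setminus\{0\}$ since $u\in C^\infty$. Evaluating the homogeneity-modulo-Schwartz relation with $s=\rho$ at $(\xi,t)=(\omega,1/\rho)$, $|\omega|=1$, gives $a(x,\delta_\rho\omega)=\rho^m u(x,\omega,1/\rho)+R_\rho(x,\omega,1/\rho)$ with $R_\rho:=u\circ\delta_\rho-\rho^m u$; Taylor-expanding $u(x,\omega,\cdot)$ at $0$ to order $N$ then yields $a(x,\delta_\rho\omega)-\sum_{j<N}a_{m-j}(x,\delta_\rho\omega)=\rho^{m-N}r_N(x,\omega,1/\rho)+R_\rho(x,\omega,1/\rho)$. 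One checks the first term is a symbol of order $m-N$ by converting the sphere-coordinate bounds on $r_N$ into weighted $\xi$-symbol bounds via homogeneity, and the second is a symbol of order $-\infty$ using the uniformity built into Definition \ref{8}; a separate, easier estimate (again from smoothness of $u$ plus the homogeneity relation) gives the basic order-$m$ bounds on $a$ itself. Together these say precisely that $a\sim\sum_j a_{m-j}$ is polyhomogeneous of order $\le m$.

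The main obstacle, in both directions, is the bookkeeping at the cone vertex: reconciling the requirement that $u$ be $C^\infty$ on all of $\R^n\times\R^{d+1}$ with the fact that the homogeneous components $a_{m-j}$ are generically singular at $\xi=0$. In $(1)\Rightarrow(2)$ this is the verification that the patched $u$ is genuinely homogeneous-modulo-Schwartz — the cutoff $\phi$ must be chosen so that its ``commutator'' with the dilations produces only compactly supported (hence Schwartz) errors — together with the extension across $t\le 0$. In $(2)\Rightarrow(1)$ it is exactly the claim that $R_\rho(x,\omega,1/\rho)$ is a symbol of order $-\infty$, which is where the precise uniform-in-$s$ formulation of ``homogeneous modulo Schwartz'' in Definition \ref{8} does the work; a naïve ``$R_s$ Schwartz for each $s$'' would not be enough. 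The graded-dilation generality contributes only notational overhead (homogeneous norms on $\R^d$, weighted orders of differentiation), since every estimate is dilation-covariant; in particular the expansion always proceeds in integer steps $m,m-1,m-2,\dots$ because the $t$-variable carries weight $1$.
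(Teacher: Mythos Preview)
Your plan for $(2)\Rightarrow(1)$ has a genuine gap.  You define $a_{m-j}$ to be the homogeneous-of-degree-$(m-j)$ extension of $\tfrac{1}{j!}\partial_t^j u(x,\omega,0)$ from the unit sphere $\{|\omega|=1\}$.  But $\partial_t^j u(\cdot,\cdot,0)$ is only homogeneous \emph{modulo Schwartz} in $\xi$ (this follows from differentiating the defining relation in $t$ at $t=0$), not homogeneous on the nose, so extending its sphere-values homogeneously does not recover the correct term.  Concretely, take $d=1$, $m=0$, and $u(\xi,t)=e^{-\xi^2-t^2}$: this is Schwartz, hence trivially in $\mathcal{HS}_G^0$, and $a(\xi)=u(\xi,1)=e^{-\xi^2-1}$ is Schwartz, so every term of its polyhomogeneous expansion is zero.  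Your recipe, however, gives $a_0\equiv e^{-1}\neq 0$.  Correspondingly your key claim that $R_\rho(x,\omega,1/\rho)$ is a symbol of order $-\infty$ is false in this example: $R_\rho(\omega,1/\rho)=e^{-\rho^2-1}-e^{-1-1/\rho^2}\to -e^{-1}$.  There is no ``uniform-in-$s$'' clause in the paper's definition of homogeneous-mod-Schwartz (Definition~\ref{10}; your reference to Definition~\ref{8} is to the wrong object), and even invoking the structure theorem $u=u'+u''$ (Theorem~\ref{1}) does not save the estimate: the term $\rho^m u''(x,\omega,1/\rho)$ is evaluated at points of bounded homogeneous norm, so Schwartz decay of $u''$ gives nothing.

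The paper fixes exactly this by never extending from the sphere.  It works inductively: set $u_0=u$, $a_j:=u_j|_{t=0}\in\mathcal{HS}_G^{m-j}$, build an extension $b_j\in\mathcal{HS}_G^{m-j}$ of $a_j$ (Theorem~\ref{29}), and put $u_{j+1}:=(u_j-b_j)/t$ via the isomorphism of Lemma~\ref{22}.  Only at the very end does it invoke Theorem~\ref{1} to split each $a_j$ as (honestly homogeneous)$\,+\,$(Schwartz); the Schwartz pieces go into the $S^{-\infty}$ remainder.  Your argument becomes correct if you \emph{first} apply Theorem~\ref{1} to $u$ itself, reducing to $u$ honestly homogeneous (then the Taylor coefficients at $t=0$ are genuinely homogeneous in $\xi$ and your sphere-extension is the identity); but as written the reduction is missing.

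For $(1)\Rightarrow(2)$, ``extending $\mathbf a$ across $t\le 0$ by a Borel summation'' is not an innocuous technicality you can set aside.  The even extension $|t|^m a(x,\delta_{|t|^{-1}}\xi)$ has one-sided Taylor expansions $\sum_j(\pm t)^j a_{m-j}(x,\xi)$ at $t=0^\pm$, which disagree at odd $j$, so it is not $C^\infty$ across $t=0$; and a Borel/Whitney extension destroys the homogeneity you need.  The paper's construction (equations~\eqref{60}--\eqref{61} and Theorem~\ref{68}) sidesteps this by building $b=\sum_j t^j b_j$ from cut-off homogeneous pieces $a_j'=\phi(\delta_{\epsilon_j}\cdot)a_j$, with the $\epsilon_j$ chosen carefully enough that the defect from homogeneity is provably in $S_G^{-\infty}$; the resulting $b$ is smooth for all $t\in\R$ by design.
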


This theorem, and its generalisation in the context of graded dilations on $\R^d$, is the first main result of the present paper.  For the statement in the presence of graded dilations, see Theorem \ref{2}.

One reason to be interested in Theorem \ref{intro_polyhomogeneous} is that it can be used to prove that the definition of classical polyhomogeneous pseudodifferential operators, due to Kohn-Nirenberg \cite{Kohn1965PsiDOalgebra} and developed notably by H\"ormander \cite{hormander1985analysisIII}, coincides with recent definitions using Connes' tangent groupoid based on work by Debord-Skandalis \cite{debord2014adiabatic}. The groupoid approach has certain advantages over the classical approach, in particular because the stability under compositions and changes of coordinates are obtained naturally from the structure of the tangent groupoid.

The equivalence of the two approaches for the classical unfiltered calculus was proven both by Debord-Skandalis \cite{debord2014adiabatic} and Van Erp with the second author \cite{Yuncken2019groupoidapproach} in different ways.  In this article, we are aiming to systematise the relationship between symbolic calculi and the groupoid calculus.

As an application of Theorem \ref{intro_polyhomogeneous}, we will prove the analogous result for the Heisenberg calculus on a contact manifold.

\begin{theoreme}
\label{contact_manifold}
Let $M$ be a contact manifold.  Beals and Greiner's algebra of Heisenberg pseudodifferential operators on $M$ \cite{Beals2016Heisenbergcalculus} coincides with the algebra of pseudodifferential operators defined via the Heisenberg tangent groupoid in \cite{Yuncken2019groupoidapproach}.
\end{theoreme}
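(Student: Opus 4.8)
The plan is to reduce the equality of the two algebras to a local, kernel-level identity and then feed it into the graded version of Theorem~\ref{intro_polyhomogeneous}, namely Theorem~\ref{2}, which supplies the precise dictionary between Heisenberg-polyhomogeneous symbols in the frequency picture and essentially homogeneous distributions on the tangent groupoid in the space picture. Throughout one works modulo smoothing operators, so the question becomes one about Schwartz kernels.

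\emph{Localization.} Both the Beals--Greiner calculus and the calculus of \cite{Yuncken2019groupoidapproach} contain the smoothing operators and are stable under multiplication by functions in $C_c^\infty(M)$, so a partition of unity reduces the claim to the following: in a chart $U\subseteq M$ carrying coordinates adapted to the contact filtration (a Darboux chart, in which the osculating group bundle is trivialised as $U\times\Hn$), a properly supported operator $A$ on $U$ lies in the Beals--Greiner class of order $m$ if and only if it lies in the class of order $m$ defined through $\THU$. Over such a chart $\THU$ is the deformation to the normal cone $(U\times U\times\R^\times)\sqcup(U\times\Hn\times\{0\})$; and since the Beals--Greiner calculus is invariant under changes of Heisenberg chart while the groupoid calculus is by construction intrinsic to $\THM$, the choice of adapted chart is immaterial on either side.

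\emph{The symbol--distribution dictionary.} Let $A$ on $U$ have Heisenberg symbol $a(x,\xi)$, polyhomogeneous of order $\le m$ for the parabolic graded dilations $\delta_\lambda$ on $\R^d$. By Theorem~\ref{2} there is $u\in C^\infty(U\times\R^d\times\R)$, homogeneous modulo Schwartz of order $m$ in $(\xi,t)$ for the graded dilations (the deformation variable $t$ carrying weight one, as in Theorem~\ref{2}), with $u(x,\xi,1)=a(x,\xi)$. Reading $U\times\R^d\times\R$ as $\THU$ in exponential coordinates and taking the partial inverse Fourier transform of $u$ in $\xi$, the slices $t\ne0$ reproduce the (suitably rescaled) Schwartz kernel of $A$, the slice $t=0$ is a convolution kernel on $G_xU\cong\Hn$, and together they define a distribution $\mathbb{P}$ on $\THU$. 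Smoothness of $u$ makes $\mathbb{P}$ conormal to the unit space and properly supported, i.e. an admissible distribution for the groupoid calculus; the Fourier transform carries the graded dilation on $(\xi,t)$-space to the dual dilation, up to the standard sign change and shift by the homogeneous dimension in passing from symbols to kernels, so that ``homogeneous modulo Schwartz of order $m$'' becomes ``essentially homogeneous under the zoom action of the weight corresponding to order $m$ in the convention of \cite{Yuncken2019groupoidapproach}'', a Schwartz error in $(\xi,t)$ going to a smooth error on $\THU$; and $\mathbb{P}|_{t=1}$ is the kernel of $A$. Running the same steps backwards --- start from an essentially homogeneous, conormal, properly supported $\mathbb{P}$ on $\THU$, Fourier transform in the group variable to obtain $u(x,\xi,t)$, and apply the implication (2)$\Rightarrow$(1) of Theorem~\ref{2} --- shows that $a:=u(\cdot,\cdot,1)$ is a Heisenberg-polyhomogeneous symbol of order $\le m$ whose operator is $\mathbb{P}|_{t=1}$. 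This gives the equality of the two classes over $U$, hence over $M$.

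\emph{Matching the privileged coordinates; the main obstacle.} The one genuinely geometric input is that the exponential coordinates on $\THM$ of \cite{Yuncken2019groupoidapproach} restrict, over a contact chart, to a deformation of the Heisenberg-privileged coordinates used by Beals--Greiner: the two differ by a filtration-preserving diffeomorphism equal to the identity along the diagonal and at $t=0$, whose further jet contributes only errors that are simultaneously rapidly decaying in the frequency variables and smooth near $t=0$ on the groupoid. I would settle this exactly as the independence-of-choices statements already in place for the two calculi taken separately, since both classes of kernels are stable under such changes, so the construction of the previous paragraph is unaffected; the same argument, with $\Hn$ replaced by the abelian graded group modelling a codimension-$1$ foliation, covers that case as well. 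I expect this last point, together with the careful bookkeeping of the graded weights and of the weight carried by $t$ in the passage between Theorem~\ref{2} and the zoom action, to be the main technical burden --- not because it is deep, but because it requires assembling (and in places re-deriving) the precise invariance properties of the Beals--Greiner and groupoid kernels and checking that the coordinate mismatch always lands in the common null ideal.
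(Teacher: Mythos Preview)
Your strategy is the paper's: localize via Darboux, then on the local model use Theorem~\ref{2} together with the fibrewise Fourier transform to translate between Heisenberg-polyhomogeneous symbols and essentially homogeneous $r$-fibred distributions on the tangent groupoid. The paper isolates the local step as a standalone result (Theorem~\ref{157}) for the flat model $M=\Hn\times\R^m$ and then deduces the contact case in a few lines from Darboux plus the locality of both calculi.

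The one place where the paper is sharper than your outline is precisely what you flag as the ``main obstacle''. You propose to absorb the discrepancy between the groupoid exponential coordinates and the Beals--Greiner privileged coordinates into a general invariance argument (both calculi are stable under filtration-preserving changes, so a mismatch lands in the common null ideal). The paper avoids this entirely: Darboux sends you \emph{exactly} to the model Heisenberg group, and there an explicit computation (Proposition~\ref{140}, Proposition~\ref{116}, Theorem~\ref{108}) shows that the exponential chart $\Exp|_{t=1}$ and the Beals--Greiner change of variable $\overline{\sigma}$ differ by a fixed linear map $\phi_y$ in each fibre, so that the fibrewise Fourier transform carries the pulled-back Schwartz kernel \emph{on the nose} to the BG $f$-symbol, and the intertwining of the zoom action with $\tilde\delta_s$ (Proposition~\ref{123}) is an exact identity. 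Thus what you expected to be the main technical burden is in fact resolved by elementary linear algebra on the model; the general invariance argument you sketch would also work, but is not needed here --- it is exactly the sort of argument one would need for non-constant osculating groups, which the paper explicitly leaves aside.
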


The Heisenberg tangent groupoid was defined independently by Van Erp \cite{VanErp2010AtiyahSingerI} and Ponge \cite{ponge2006tangentgroupoid}, see also \cite{VanErp2017groupoid, choi2019tangent, Higson2018Euler, Mohsen2021groupoid}.  

We also obtain the analogous theorem for codimension $1$ foliations, see Theorem \ref{foliations}.  We expect that the groupoid calculus of \cite{Yuncken2019groupoidapproach} coincides with the Heisenberg calculus of \cite{Beals2016Heisenbergcalculus} even when the osculating groups are non-constant, and indeed that it coincides with Melin's calculus \cite{melin1982lie} for arbitrary filtered manifolds.  However, the proof in these cases becomes far more technical due to the delicate nature of privileged coordinate systems, see \cite{ponge2008heisenberg, choi2019tangent}.  This is outside the scope of the present paper.

\bg
\bg
\bg
\bg
\bg
\bg
\bg

\subsection{Homogeneous, polyhomogeneous, homogeneous functions modulo Schwartz with respect to dilations and the main theorem.}
In this section we introduce the notation and terminology we will need for the rest of the article.
We consider the one-parameter family of dilations, on $\R^d$:

\begin{equation} \label{3}
\delta_s : \R^d \rightarrow \R^d, (\xi_1,...,\xi_d) \mapsto (s^{\rho_1} \xi_1,...,s^{\rho_d} \xi_d), ~~ s \in \R_+^*,
\end{equation}

where the d-tuple $(\rho_1,...,\rho_d) \in (\N^*)^d$. We consider also the dilations on $\R^{d+1}$:

\begin{equation} \label{4}
\tilde{\delta}_s : \R^d \times \R \rightarrow \R^{d+1}, (\xi,t) \mapsto (\delta_s(\xi),st),
\end{equation}

where we augment $\delta_s$ by one dimension with weight  $\rho_{d+1}=1$. 

The notions of homogeneous and polyhomogeneous functions (consult classical references : \cite{Taylor1982PsiDo}, \cite{Treves1980PsiDointroduction}, \cite{Petersen1983PsiDO} and more recent books like  \cite{abels2011pseudodifferential} chapter I.3, \cite{Wong2014PsiDO} ) admit generalisations in the presence of graded dilations as in \eqref{3}. We begin with the definition and properties of a homogeneous quasi-norm $|.|$,  following for instance the book \cite{FischerRuzhansky2016Quantization} by Fischer and Ruzhansky, see section 3.1.6.
\begin{definition}
A homogeneous quasi-norm is a function $|.| : \R^d \rightarrow \R_+$ satisfying:
\begin{enumerate}
\item{Symmetry: $\forall ~ x \in \R^d $, $|-x|=|x|$.}
\item{Homogeneity: $\forall ~ x \in \R^d, \forall ~ s>0$ $|\delta_s x|=s|x|$.}
\item{Positivity: $|x|=0 \Leftrightarrow x=0$.}
\item{Triangle inequality: there exists $C>0$ such that $\forall ~ (x,y) \in \R^d$ we have:
$$|x+y| \leq C(|x|+|y|).$$}
\end{enumerate}

\end{definition}
In what follows, $|.|$ refers to a  homogeneous quasi-norm on $\R^d$. For instance we could set $|\xi|:=\sum_{k=1}^{d} |\xi_k|^{\frac{1}{\rho_k}}$ or $|\xi|:=(\sum_{k=1}^{d} |\xi_k|^{\frac{2a}{\rho_k}})^{\frac{1}{2a}}$, where $a=lcm(\rho_j)$.
The results in \cite{FischerRuzhansky2016Quantization} show that  homogeneous quasi-norms behave mostly like norms on $\R^d$. Any two homogeneous quasi-norms on $\R^d$ are equivalent in the usual sense. Finally if $||.||$ is the euclidian norm on $\R^d$, then there exists $a,b,C_1,C_2 > 0$ such that $\forall ~ x \in \R^d$:

\begin{equation} \label{148}
C_1||x||^a \leq |x| \leq C_2 ||x||^b.
\end{equation}

We can now move on to  the definitions of the spaces of functions involved in this article. In the following, $U$ is an open subset of $\R^n$. 

\begin{definition} \label{8}
We write $\mathcal{H}_{G}^m(U \times \R^d)$ for the set of  homogeneous functions of order $m$. This consists of smooth functions on $U \times (\R^{d} \setminus \{ 0 \})$ that satisfy:

\begin{equation}
\forall ~  \xi \in \R^d, \forall ~ x \in U, \forall ~ s \in \R_+^*, ~~~~ f(x, \delta_s \xi)=s^m f(x,\xi),
\end{equation}

Sometimes we will say that $f$ is homogeneous of order $m$ on the nose. Note that despite the notation, elements of $\mathcal{H}_{G}^m(U \times \R^d)$ are only defined on $U \times (\R^{d} \setminus \{ 0 \})$.

\end{definition}

\begin{definition} \label{9} 
We write $\mathcal{S}_G(U \times \R^d)$ for the set of smooth functions which are Schwartz class in the variable  $\R^d$. More precisely, it consists of functions $f \in \mathcal{C}^{\infty}(U \times \R^{d})$ which satsify that for every compact set $K$ in $U$, for every $k \in \mathbb{N}$ and for every pair of multi-indices $(\alpha, \beta)$, there exists a constant $C_{\alpha,\beta,k,K} >0$ such that $\forall ~ \xi \in \R^d $, $\forall ~ x \in K$:

\begin{equation} \label{149}
|D_x^{\alpha} D_\xi^{\beta} f(x,\xi)| \leq C_{\alpha,\beta,k,K} (1 + |\xi|)^{-k}.
\end{equation}
\end{definition}
\textbf{Remark:} 
The choice of using the homogeneous quasinorm $|\xi|$ instead of the usual euclidean norm $||\xi||$ has no effect, other than a change in the constant $C_{\alpha,\beta,k,K}$,  thanks to the inequalities \eqref{148} and \eqref{149}.

\begin{definition} \label{10} 
We write $ \mathcal{H} \mathcal{S}_G^m(U \times \R^d)$  for the set of homogeneous functions of order $m$ modulo Schwartz. It consists of functions $u \in \mathcal{C}^{\infty}(U \times \R^{d})$  satisfying for all $ s \in \R_+^* $:
\begin{equation} \label{150}
(x,\xi) \mapsto u(x,\delta_s \xi) - s^m u(x,\xi) \in \mathcal{S}_G(U \times \R^d).
\end{equation}
\end{definition}

To show that $b \in \mathcal{H} \mathcal{S}_G^m(U \times \R^d)$, it suffices to show \eqref{150}, for all $s \in [1,2]$. This is true since the set of all $s$ for which \eqref{150} is true is a subgroup of $\R_+^*$. 


\textbf{Remark:} In particular, every Schwartz function from definition \ref{9} is trivially a homogeneous function of order $m$ modulo Schwartz. 
We also have the following lemma, whose proof is left to the reader:

\begin{lemma} \label{42}
Let $f \in \mathcal{H}_G^m(U \times \R^d)$ and let $\chi \in C_c^\infty(\R^d)$ satisfying $\chi =0$ in a neighbourhood  of $0$ and $\chi = 1$ outside a compact set, then $\chi f \in \mathcal{H} \mathcal{S}_G^m(U \times \R^d)$.
\end{lemma}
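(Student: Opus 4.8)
The plan is to reduce, using the fact that $f$ is homogeneous of order $m$ on the nose, the verification of \eqref{150} to the observation that a certain explicit function is smooth, compactly supported in $\xi$, and hence trivially Schwartz in $\xi$. First I would note that $\chi f$ is a genuine element of $\mathcal{C}^\infty(U \times \R^d)$: although $f$ is defined only on $U \times (\R^d \setminus \{0\})$, the cutoff $\chi$ vanishes on a neighbourhood of $0$, so $\chi f$ extends by zero to a smooth function on all of $U \times \R^d$. Now fix $s \in \R_+^*$. For $\xi \neq 0$ the relation $f(x,\delta_s\xi) = s^m f(x,\xi)$ gives
\[
  \chi(\delta_s\xi)\,f(x,\delta_s\xi) - s^m\,\chi(\xi)\,f(x,\xi) = s^m\, f(x,\xi)\bigl(\chi(\delta_s\xi) - \chi(\xi)\bigr),
\]
and this identity extends to all of $U \times \R^d$, since near $\xi = 0$ every $\chi$-factor vanishes and both sides are $0$ (with the extension-by-zero convention).

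Next I would analyse the factor $\psi_s(\xi) := \chi(\delta_s\xi) - \chi(\xi)$. Choose $0 < r < R$ with $\chi \equiv 0$ on $\{\,|\xi| \leq r\,\}$ and $\chi \equiv 1$ on $\{\,|\xi| \geq R\,\}$, using a homogeneous quasi-norm (recall that a quasi-norm ball is bounded by \eqref{148}). Since $|\delta_s\xi| = s|\xi|$, the function $\psi_s$ vanishes on $\{\,|\xi| \leq \min(r,r/s)\,\}$ and on $\{\,|\xi| \geq \max(R,R/s)\,\}$; hence $\psi_s \in C_c^\infty(\R^d)$ and $\psi_s$ vanishes on a neighbourhood of $0$. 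Consequently $s^m f\cdot\psi_s$ is smooth on $U \times (\R^d\setminus\{0\})$, vanishes on a neighbourhood of $U \times \{0\}$, and is supported in $U \times \{\,|\xi| \leq \max(R,R/s)\,\}$; in particular it is a well-defined smooth function on $U \times \R^d$.

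Finally, any $g \in \mathcal{C}^\infty(U \times \R^d)$ which (for our fixed $s$) is supported in $\xi$ inside a fixed ball belongs to $\mathcal{S}_G(U \times \R^d)$: given a compact $K \subseteq U$ and multi-indices $\alpha,\beta$, the function $D_x^\alpha D_\xi^\beta g$ is bounded on the compact set $K \times \{\,|\xi| \leq \max(R,R/s)\,\}$ and vanishes outside it, so \eqref{149} holds with $C_{\alpha,\beta,k,K} = (1+\max(R,R/s))^k\,\sup|D_x^\alpha D_\xi^\beta g|$. This establishes \eqref{150} for every $s \in \R_+^*$, hence $\chi f \in \mathcal{H}\mathcal{S}_G^m(U \times \R^d)$.

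There is no real obstacle here; the only points needing a little care are that $f$ is not defined at $\xi = 0$, so one must check that the products in question extend smoothly by zero, and that the cutoff is composed with the dilation, so one must track how the support region depends on $s$ through the homogeneous quasi-norm. If one prefers, one may restrict to $s \in [1,2]$ by the subgroup remark following Definition \ref{10}, but this is not necessary.
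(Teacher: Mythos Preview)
The paper does not actually prove this lemma; it states it with ``proof is left to the reader.'' Your argument is correct and is exactly the computation one expects: exploit the exact homogeneity of $f$ to factor the defect as $s^m f(x,\xi)\bigl(\chi(\delta_s\xi)-\chi(\xi)\bigr)$, then observe that $\chi(\delta_s\cdot)-\chi$ is smooth, vanishes near $0$, and has compact support in $\xi$, so the product is compactly supported in $\xi$ and hence Schwartz in $\xi$ locally uniformly in $x$. Your handling of the two minor subtleties (extension by zero across $\xi=0$, and tracking the $s$-dependent support via the homogeneous quasi-norm) is fine.
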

The following definition is the generalisation of Hörmander's and Kohn-Nirenberg's symbol class of functions to spaces with graded dilations, see  \cite{hormander1983analysis} Definition 7.8.1 p236 or  the original articles of Hörmander \cite{Hormander1967PsiDohypoellipticeq} from 1967, or of Kohn and Nirenberg \cite{Kohn1965PsiDOalgebra} from 1965.
\begin{definition} \label{11} 

We write $S_G^m(U \times \R^{d})$ for the space of symbols of order $m$. It consists of functions $a \in C^\infty(U \times \R^{d})$ which satisfy that for every compact set $K$ in $U$, for every pair of multi-indices $(\alpha,\beta)$, there exists a constant $C_{K,\alpha,\beta} >0$ such that $\forall ~ (x,\xi) \in K \times \R^{d}$ the following condition is fulfilled:

\begin{equation} \label{151}
|D_x^\alpha D_\xi^\beta a(x,\xi)| \leq C_{K,\alpha,\beta} (1+|\xi|)^{m - | \beta | },
\end{equation}

where $|\beta|$ denotes the homogeneous order of $\beta$, namely $|\beta|=\rho_1 \beta_1+...+\rho_d \beta_d$.
Moreover we set $S_G^{- \infty}(U \times \R^{d}):= \cap_{m \in \R} S_G^m(U \times \R^{d})$.
\end{definition}
\textbf{Remark:} As usual it is still true that:

\begin{equation}
S_G^{- \infty}(U \times \R^{d})=\mathcal{S}_G(U \times \R^d).
\end{equation}
The following result can be found in \cite{Taylor1982PsiDo} p37. 

\begin{proposition} \label{92}
Suppose $f \in \mathcal{H}_G^m(U \times \R^d)$ and $\phi \in C_c^\infty(\R^d)$ such that $\phi(\xi)=0$ on a compact neighbourhood of $0$ and $\phi(\xi)=1$ near infinity, then $\phi f \in S_G^m(U \times \R^d)$.
\end{proposition}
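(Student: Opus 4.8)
The plan is to split $\R^d$ into two pieces: a compact shell $\{\,|\xi|\le R\,\}$, where mere smoothness of $\phi f$ already gives the estimate \eqref{151}, and the open region $\{\,|\xi|>R\,\}$, where $\phi\equiv 1$ and on-the-nose homogeneity does all the work. First I would observe that $\phi f$ is smooth on all of $U\times\R^d$: on a neighbourhood of $\xi=0$ it vanishes identically because $\phi$ does, and away from $\xi=0$ it is a product of smooth functions. Next, fix $R\ge 1$ large enough that $\phi(\xi)=1$ whenever $|\xi|\ge R$; such an $R$ exists because $\phi$ equals $1$ outside a compact set and compact sets are bounded for the quasi-norm by \eqref{148}.

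The one substantive ingredient is the elementary lemma that differentiation shifts the homogeneous order in the expected way: if $g\in\mathcal{H}_G^k(U\times\R^d)$ then $D_x^\alpha D_\xi^\beta g\in\mathcal{H}_G^{k-|\beta|}(U\times\R^d)$, where $|\beta|=\rho_1\beta_1+\cdots+\rho_d\beta_d$. This follows by differentiating the homogeneity identity $g(x,\delta_s\xi)=s^k g(x,\xi)$: by the chain rule each $\xi_j$-derivative produces a factor $s^{\rho_j}$ on the left-hand side, while $x$-derivatives are harmless, so $(D_x^\alpha D_\xi^\beta g)(x,\delta_s\xi)=s^{k-|\beta|}(D_x^\alpha D_\xi^\beta g)(x,\xi)$. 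I expect this to be the only step that needs any care, and even it is routine.

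On the compact set $K\times\{\,|\xi|\le R\,\}$ the continuous function $D_x^\alpha D_\xi^\beta(\phi f)$ is bounded by some constant $C_1$, while $(1+|\xi|)^{m-|\beta|}$ is bounded below by a positive constant $c_1$ there; hence \eqref{151} holds on this region with constant $C_1/c_1$. On the open set $\{\,|\xi|>R\,\}$ we have $\phi\equiv 1$, so $D_x^\alpha D_\xi^\beta(\phi f)=D_x^\alpha D_\xi^\beta f$, which by the lemma lies in $\mathcal{H}_G^{m-|\beta|}(U\times\R^d)$. Writing $\xi=\delta_{|\xi|}\omega$ with $\omega:=\delta_{1/|\xi|}\xi$ lying on the quasi-sphere $S:=\{\,|\xi|=1\,\}$ — a compact subset of $\R^d\setminus\{0\}$ by \eqref{148} and continuity of $|\cdot|$ — homogeneity gives
\[
  |D_x^\alpha D_\xi^\beta f(x,\xi)| = |\xi|^{m-|\beta|}\,|D_x^\alpha D_\xi^\beta f(x,\omega)| \;\le\; |\xi|^{m-|\beta|}\sup_{K\times S}|D_x^\alpha D_\xi^\beta f|,
\]
the supremum being finite since $K\times S$ is a compact subset of $U\times(\R^d\setminus\{0\})$, on which $D_x^\alpha D_\xi^\beta f$ is continuous. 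Since $|\xi|$ and $1+|\xi|$ are comparable up to a constant for $|\xi|\ge R\ge 1$, this yields \eqref{151} on $\{\,|\xi|>R\,\}$ with some constant $C_2$. Taking $C_{K,\alpha,\beta}=\max(C_1/c_1,\,C_2)$ finishes the proof.
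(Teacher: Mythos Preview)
Your argument is correct and is precisely the standard one. The paper does not actually supply a proof of this proposition; it simply points to \cite[p.~37]{Taylor1982PsiDo}, and the argument there is essentially the one you give: split into a compact region near the origin (where smoothness and compactness give the bound for free) and the region where $\phi\equiv 1$ (where homogeneity reduces the estimate to a supremum over the compact quasi-sphere). Your ``one substantive ingredient'' about derivatives shifting the homogeneous order is already recorded in the paper as Lemma~\ref{15}(1), so you may simply cite that rather than reprove it.

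One cosmetic remark: the hypothesis ``$\phi\in C_c^\infty(\R^d)$'' in the statement is a typo (a compactly supported function cannot equal $1$ near infinity); the intended hypothesis, which you correctly use, is that $\phi\in C^\infty(\R^d)$ with $1-\phi$ compactly supported and $\phi=0$ on a neighbourhood of the origin.
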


The following definition is based on the definition of classical symbol. 
The generalisation to graded dilations appears in the work of Beals and Greiner \cite{Beals2016Heisenbergcalculus}, Melin \cite{melin1982lie} and \cite{Taylor1984microlocalanalysis}, amongst others.

\begin{definition} \label{12} 
We write $S_{phg,G}^m(U \times \R^{d})$ for the set of  polyhomogeneous symbols of order $m$. It consists of functions $a \in S_G^m(U \times \R^{d})$ which admit an asymptotic expansion $a \sim \sum_{j \in \mathbb{N}} a_{j}$ with $a_{j} \in \mathcal{H}_{G}^{m-j}(U \times \R^d)$ meaning that for every integer $N$, for any compact set $K$ in $\R^d$ containing $0$ 
  :

\begin{equation}
a - \sum_{j=0}^{N} \chi_K ~ a_{j} \in S_G^{m-N-1}(U \times \R^{d}),
\end{equation}

where $\chi_K \in C_c^\infty(\R^d)$ is any smooth function such that:

\begin{equation}
\chi_K(\xi) = \left\{
    \begin{array}{ll}
        0 &  ~ \text{in} ~  K \\
        1 &  ~ \text{near  infinity.}
    \end{array}
\right.
\end{equation}
\end{definition}

\bg

We recall that the symbol of a differential operator with smooth coefficients on $U$ is always polyhomogeneous since it is polynomial in $\xi$. In light of this, Proposition   \ref{96} fuels the desire to find a more general result concerning extension of polyhomogeneous symbols. As such, we state the main result of this article.

\begin{theoreme} \label{2}
Let $a \in C^{\infty}(U \times \R^d)$ where U refers to an open subset of $\R^n$. We equip the vector spaces $\R^d$ and $\R^d \times \R$ respectively with the graded dilations $(\delta_s)$ and $(\tilde{\delta}_s)$  (see equations \eqref{3}, \eqref{4}). Then the following are equivalent:
\begin{enumerate}
\item{$a \in S_{phg,G}^m(U \times \R^d) $. }
\item{There exists $u \in  \mathcal{H} \mathcal{S}_{G}^m(U \times \R^d \times \R)$ such that $u(x,\xi,1)=a(x,\xi)$.}
\end{enumerate}
\end{theoreme}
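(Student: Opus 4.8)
The plan is to prove both implications by exploiting the fact that a polyhomogeneous symbol is, up to a Schwartz error, a locally finite sum $\sum_j \chi\, a_j$ of (cut-off) homogeneous functions, and that the map $a_j \mapsto t^{m-j} a_j(x, \delta_{1/t}\xi)$ provides the weighting-by-$t$ trick from Proposition \ref{96} in the homogeneous-modulo-Schwartz world.

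\textbf{(2) $\Rightarrow$ (1).} Suppose $u \in \mathcal{H}\mathcal{S}_G^m(U\times\R^d\times\R)$ and set $a(x,\xi) = u(x,\xi,1)$. First I would observe that restriction to $t=1$ of a homogeneous-modulo-Schwartz function is a symbol: applying $\tilde\delta_s$ and using \eqref{150} with any compact $K\ni 0$, one can "blow up" away from the origin, and the Schwartz remainder, being of order $-\infty$, is harmless; the derivative estimates \eqref{151} follow by differentiating the homogeneity relation and using \eqref{148}. So $a\in S_G^m(U\times\R^d)$. To produce the asymptotic expansion, I would Taylor-expand $u$ in the last variable around $t=0$ along the rays: more precisely, define $a_j(x,\xi)$ to be the order-$(m-j)$ homogeneous component extracted from $u$ by $a_j(x,\xi) := \frac{1}{j!}\partial_s^j\big|_{s=0}\big(s^{-m} u(x,\delta_{1/s}\xi, \cdot)\big)$-type formulas, or more cleanly, restrict $u$ to the hypersurface $\{|\xi|=1\}$ (for $\xi\neq 0$), expand in $t$ as an honest smooth function, and re-homogenise each coefficient. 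One then checks that $u(x,\xi,1) - \sum_{j=0}^N \chi_K(\xi) a_j(x,\xi)$ lies in $S_G^{m-N-1}$, using \eqref{150} to control the tail. By Proposition \ref{92} each $\chi_K a_j \in S_G^{m-j}$, so $a$ is polyhomogeneous of order $m$.

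\textbf{(1) $\Rightarrow$ (2).} Given $a\sim\sum_j a_j$ with $a_j\in\mathcal{H}_G^{m-j}$, the idea is to reverse-engineer a homogeneous-modulo-Schwartz function on $U\times\R^{d+1}$ whose restriction to $t=1$ is $a$. For each homogeneous piece I set $\tilde a_j(x,\xi,t) := t^{m-j}\, a_j(x,\delta_{1/t}\xi)$ for $t>0$; this is homogeneous of order $m$ for $\tilde\delta_s$ on the nose (away from $\{\xi = 0\}$) and restricts at $t=1$ to $a_j$. The subtlety is that these are only defined for $t>0$ and are singular on $\{\xi = 0\}\cup\{t=0\}$, so I would multiply by a cutoff $\tilde\chi(\xi,t)$ that is $1$ near infinity in $\R^{d+1}$ and vanishes near the axis $\{\xi=0\}$ and near $\{t\leq 0\}$; by Lemma \ref{42} (in $d+1$ variables) each $\tilde\chi \tilde a_j \in \mathcal{H}\mathcal{S}_G^m(U\times\R^{d+1})$. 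Then I form a convergent sum $u := \sum_j \varepsilon_j \, \tilde\chi\, \tilde a_j$ using a Borel-type summation: choose cutoffs supported further and further out so that the series converges in $C^\infty$ and, crucially, so that $u(x,\xi,1) - a(x,\xi)$ is Schwartz in $\xi$ — this is exactly the classical Borel construction adapted to the graded setting, and the asymptotic-expansion hypothesis on $a$ is what makes the difference lie in $S_G^{-\infty}=\mathcal{S}_G$. Finally, since adding a Schwartz function (trivially in $\mathcal{H}\mathcal{S}_G^m$ by the Remark after Definition \ref{10}) preserves the class, I can correct $u$ on $\{t=1\}$ and conclude.

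\textbf{Main obstacle.} The genuinely delicate point is the behaviour near the locus $\{\xi=0\}$ together with $\{t=0\}$: the naive weighting $t^{m-j}a_j(x,\delta_{1/t}\xi)$ is singular there, and one must check that after cutting off and Borel-summing, the resulting $u$ is (a) globally smooth on $U\times\R^{d+1}$, (b) genuinely homogeneous modulo Schwartz — i.e. that $u(x,\tilde\delta_s(\xi,t)) - s^m u(x,\xi,t)$ is Schwartz in $(\xi,t)$ \emph{jointly}, including as $t\to 0$ with $\xi$ bounded, which is where the one-dimensional weight $\rho_{d+1}=1$ and the estimates \eqref{148}, \eqref{151} must be combined carefully — and (c) that the restriction map and the summation interact correctly so that only a Schwartz error is introduced at $t=1$. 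Controlling the Schwartz seminorms uniformly through the Borel summation, in the mixed homogeneous quasi-norm on $\R^{d+1}$, is the technical heart of the argument.
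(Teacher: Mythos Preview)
Your overall strategy matches the paper's: for $(2)\Rightarrow(1)$ one peels off homogeneous pieces from $u$ using the extra $t$-variable, and for $(1)\Rightarrow(2)$ one weights each $a_j$ by a power of $t$ and Borel-sums. You also correctly identify the technical heart as controlling the Schwartz seminorms uniformly through the Borel summation near $\{t=0\}$. However, there is a genuine error in your $(1)\Rightarrow(2)$ construction.

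Your formula $\tilde a_j(x,\xi,t):=t^{m-j}a_j(x,\delta_{1/t}\xi)$ does \emph{not} do what you claim. Since $a_j\in\mathcal{H}_G^{m-j}$ is homogeneous on the nose, $a_j(x,\delta_{1/t}\xi)=t^{-(m-j)}a_j(x,\xi)$, so $\tilde a_j(x,\xi,t)=a_j(x,\xi)$ identically in $t$: it is constant in $t$ and homogeneous of order $m-j$, not $m$, under $\tilde\delta_s$. (A direct check: $\tilde a_j(x,\delta_s\xi,st)=(st)^{m-j}a_j(x,\delta_{1/t}\xi)=s^{m-j}\tilde a_j(x,\xi,t)$.) Consequently no cutoff will turn $\tilde a_j$ into something of $\tilde\delta_s$-order $m$, and your sum $\sum_j \varepsilon_j\,\tilde\chi\,\tilde a_j$ cannot be in $\mathcal{HS}_G^m$; moreover the scalar coefficients $\varepsilon_j$ should not appear in the sum at all (they belong inside the cutoffs, not as multiplicative constants --- otherwise the restriction at $t=1$ is $\sum_j\varepsilon_j a_j$, not $a$). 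The paper's fix is exactly the missing $t^j$: one first builds, from the cut-off $a_j':=\phi(\delta_{\epsilon_j}\cdot)a_j\in\mathcal{HS}_G^{m-j}(U\times\R^d)$, an extension $b_j\in\mathcal{HS}_G^{m-j}(U\times\R^{d+1})$ via the interpolation
\[
b_j(x,\xi,t)=\chi_0(t)\,a_j'(x,\xi)+\chi_1(t)\,|t|^{m-j}a_j'(x,\delta_{|t|^{-1}}\xi),
\]
(this is Theorem~\ref{29}; note it is defined and smooth for \emph{all} $t\in\R$, avoiding your cutoff near $t\le 0$), and then sets $b:=\sum_j t^{j}b_j$. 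The factor $t^j$ supplies the missing $j$ orders so that each $t^jb_j$ is (modulo Schwartz) of order $m$; the Borel parameters $\epsilon_j$ live inside $a_j'$ and are refined to force the series to land in $\mathcal{HS}_G^m$ (Theorem~\ref{68}).

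For $(2)\Rightarrow(1)$ your idea is correct in spirit but your formula $\frac{1}{j!}\partial_s^j|_{s=0}(s^{-m}u(x,\delta_{1/s}\xi,\cdot))$ does not make sense as written (the $\xi$-argument blows up as $s\to 0$). The paper implements the ``Taylor expansion at $t=0$'' cleanly and inductively: set $u_0=u$, $a_j(x,\xi):=u_j(x,\xi,0)\in\mathcal{HS}_G^{m-j}(U\times\R^d)$, extend $a_j$ to $b_j\in\mathcal{HS}_G^{m-j}(U\times\R^{d+1})$ as above, and define $u_{j+1}:=(u_j-b_j)/t$, which lies in $\mathcal{HS}_G^{m-j-1}$ by the isomorphism $M_t:\mathcal{HS}_G^{m-1}\to I_0^m$ (Lemma~\ref{22}). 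Then $u_{N+1}|_{t=1}=a-\sum_{j\le N}a_j\in S_G^{m-N-1}$ by Proposition~\ref{18}, which is exactly the asymptotic expansion.
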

\subsection{Outline of the paper.} 

In the second section we recall some preliminaries including an important result, Theorem \ref{1}, on the structure of homogeneous functions  modulo Schwartz. 

The next two sections are dedicated to the proof of Theorem \ref{2} :

In the third and fourth sections we prove $2) \Rightarrow 1)$ and $1) \Rightarrow 2)$ of this theorem, respectively. 

In the fifth section, we give an application to the Heisenberg pseudodifferential calculus of Beals and Greiner \cite{Beals2016Heisenbergcalculus}. In their book, they define a pseudo-differential calculus on a Heisenberg manifold, see definitions \ref{97} and \ref{112}. In \cite{Yuncken2017tangentgroupoidfilteredmanifold}, the authors used an analog of Connes' tangent groupoid,  see \cite{Yuncken2017tangentgroupoidfilteredmanifold},\cite{Yuncken2019groupoidapproach},\cite{ponge2008heisenberg}, to define pseudodifferential calculus on a filtered manifold. We will use Theorem \ref{2} to show that this groupoidal calculus coincides with Beals and Greiner's calculus in the case of the model Heisenberg manifold $M=\Hn \times \R^m$ of dimension $d+1$, with $d=2n+m$. This model manifold can be seen as an intermediary between an abelian and a non-abelian structure.

In the sixth section, we generalise this previous result to contact manifolds, using Darboux's theorem, and to foliations. 


For brevity, we omit the proofs of "common" results, but the reader will find fuller details of this article in the future memoir \cite{couchet2023thesis}.

\subsection*{Acknowledgements.}
We would like to thank Dominique Manchon, "Chargé de recherches" CNRS at UCA, Clermont Ferrand, France, for his precious advice and his kindness.

\section{Preliminaries.}
\label{sec:section1}

\subsection{ On homogeneous functions modulo Schwartz.}
\label{sec:Taylorthm}

We shall now recall a fundamental result about homogeneous functions modulo Schwartz which will be used repeatedly in what follows. We recall the notation $\mathcal{H} _{G}^m(U \times \R^d)$ and $\mathcal{H} \mathcal{S}_{G}^m(U \times \R^d)$ for functions which are homogeneous and homogeneous modulo Schwartz, respectively.   Versions of this theorem can be found in  proposition (12.72) p113 from Beals and Greiner \cite{Beals2016Heisenbergcalculus}, or in   proposition 2.2 from Taylor \cite{Taylor1984microlocalanalysis}. One will also find a revisited proof in \cite{couchet2023thesis}.

\begin{theoreme} \label{1}  
Let $u \in C^\infty(U \times \R^d)$. The following are equivalent:
\begin{enumerate}
\item{$u \in \mathcal{H} \mathcal{S}_{G}^m(U \times \R^{d})$.}
\item{For any compact neighbourhood $K$ of $0$ in $\R^d$, there exists $u' \in \mathcal{H}_G^m(U \times \R^d)$ and $u'' \in \mathcal{S}_G(U \times \R^d)$ so that, in $U \times K^c$ (where $K^c=\R^{d} \setminus K$), the following equality is true: 
$$u=u'+u''.$$}
\end{enumerate}
\end{theoreme}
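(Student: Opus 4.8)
The plan is to prove the two implications separately; $(2)\Rightarrow(1)$ is the soft direction, so I would do it first. Suppose that on $U\times K^c$ one has $u=u'+u''$ with $u'\in\mathcal{H}_G^m(U\times\R^d)$ and $u''\in\mathcal{S}_G(U\times\R^d)$. Choose $\chi\in C^\infty(\R^d)$ that vanishes on $K$ and equals $1$ outside a compact set; by Lemma \ref{42}, $\chi u'\in\mathcal{H}\mathcal{S}_G^m(U\times\R^d)$. Since $\chi$ is supported in $K^c$, the identity $\chi u'=\chi u-\chi u''$ holds on all of $U\times\R^d$ (both sides vanish on $U\times K$ and agree on $U\times K^c$). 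Therefore $u-\chi u'=(1-\chi)u+\chi u''$: the first summand is smooth and, for each $x$, compactly supported in $\xi$, while the second is Schwartz times a bounded smooth function with bounded derivatives, so both lie in $\mathcal{S}_G(U\times\R^d)\subset\mathcal{H}\mathcal{S}_G^m(U\times\R^d)$. As this space is a vector space, $u=\chi u'+(u-\chi u')$ belongs to it.

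For $(1)\Rightarrow(2)$, set $v(x,\xi):=u(x,\delta_2\xi)-2^mu(x,\xi)$, which lies in $\mathcal{S}_G(U\times\R^d)$ by hypothesis. Iterating $u(x,\delta_{2^k}\xi)=2^mu(x,\delta_{2^{k-1}}\xi)+v(x,\delta_{2^{k-1}}\xi)$ gives the telescoping identity
\[
 2^{-mk}u(x,\delta_{2^k}\xi)=u(x,\xi)+\sum_{j=1}^{k}2^{-mj}\,v(x,\delta_{2^{j-1}}\xi).
\]
I would then set $u'(x,\xi):=\lim_{k\to\infty}2^{-mk}u(x,\delta_{2^k}\xi)$ on $U\times(\R^d\setminus\{0\})$, i.e. $u'=u+\sum_{j\ge1}2^{-mj}v(\cdot,\delta_{2^{j-1}}\cdot)$. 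Given a compact neighbourhood $K$ of $0$, fix $c>0$ small enough that $\{|\xi|\le 2c\}\subset K$ and a cutoff $\psi\in C^\infty(\R^d)$ with $\psi\equiv0$ on $\{|\xi|\le c\}$ and $\psi\equiv1$ on $\{|\xi|\ge 2c\}$, and put $u'':=-\psi(\xi)\sum_{j\ge1}2^{-mj}v(\cdot,\delta_{2^{j-1}}\cdot)$, extended by $0$. On $U\times K^c\subset U\times\{|\xi|>2c\}$ we have $\psi\equiv1$, so the telescoping identity gives $u=u'+u''$ there; it remains to check that $u'\in\mathcal{H}_G^m(U\times\R^d)$ and $u''\in\mathcal{S}_G(U\times\R^d)$.

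The heart of the matter — and the step I expect to be the main obstacle — is the estimate making the series converge, since the hypothesis only controls $u$ one dilation parameter at a time and the telescoping is what turns this into a convergent construction. Using $D_\xi^\beta[v(x,\delta_{2^{j-1}}\xi)]=2^{(j-1)|\beta|}(D_\xi^\beta v)(x,\delta_{2^{j-1}}\xi)$ together with the Schwartz bounds on $v$, one gets, for every $N$ and uniformly for $x$ in compacts of $U$,
\[
 \bigl|D_x^\alpha D_\xi^\beta\bigl(2^{-mj}v(x,\delta_{2^{j-1}}\xi)\bigr)\bigr|\le C_{N,\alpha,\beta}\,2^{-mj+(j-1)|\beta|}\,(1+2^{j-1}|\xi|)^{-N}.
\]
On any region $\{|\xi|\ge c>0\}$, splitting the sum at the index where $2^{j-1}|\xi|\sim1$ and choosing $N$ large relative to $\beta$, $m$ (and at least $M$), the geometric factor beats $2^{-mj+(j-1)|\beta|}$ whatever the value of $m$, so $\sum_{j\ge1}2^{-mj}v(\cdot,\delta_{2^{j-1}}\cdot)$ converges there in the topology of $\mathcal{S}_G$ with all $(x,\xi)$-derivatives bounded by $C_M(1+|\xi|)^{-M}$ for every $M$; on compact subsets of $U\times(\R^d\setminus\{0\})$ the same bound gives $C^\infty$-convergence. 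Hence $u'\in C^\infty(U\times(\R^d\setminus\{0\}))$ and $u''\in\mathcal{S}_G(U\times\R^d)$. Finally, $u'$ is homogeneous of order $m$: for $s=2$ this is immediate from the definition of $u'$, and for general $s>0$ one writes $2^{-mk}u(x,\delta_{2^k}\delta_s\xi)=s^m\,2^{-mk}u(x,\delta_{2^k}\xi)+2^{-mk}v_s(x,\delta_{2^k}\xi)$ with $v_s(x,\xi):=u(x,\delta_s\xi)-s^mu(x,\xi)\in\mathcal{S}_G(U\times\R^d)$, and lets $k\to\infty$, the remainder tending to $0$ because $v_s$ is Schwartz. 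Thus $u'\in\mathcal{H}_G^m(U\times\R^d)$, which completes the proof. (Note that no lower bound on $m$ is needed, and the quasi-norm enters only through its homogeneity and its equivalence with $\|\cdot\|$.)
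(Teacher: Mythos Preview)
Your proof is correct. Note, however, that the paper does not actually prove Theorem~\ref{1}: it is stated in the preliminaries as a known structural result, with pointers to Beals--Greiner \cite{Beals2016Heisenbergcalculus} (Proposition~(12.72)), Taylor \cite{Taylor1984microlocalanalysis} (Proposition~2.2), and a detailed proof in \cite{couchet2023thesis}. So there is no in-paper argument to compare against directly.

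That said, your dyadic telescoping construction $u'(x,\xi)=\lim_{k\to\infty}2^{-mk}u(x,\delta_{2^k}\xi)$ is exactly the standard mechanism behind those cited results, and the estimate you isolate is the right one: on $\{|\xi|\ge c\}$ one has $1+2^{j-1}|\xi|\gtrsim 2^{j-1}(1+|\xi|)$, so taking $N$ large relative to $|\beta|$, $m$ and $M$ in the Schwartz bound for $v$ gives both summability of the series and the $(1+|\xi|)^{-M}$ decay needed for $u''$. The phrase about ``splitting the sum at the index where $2^{j-1}|\xi|\sim1$'' is slightly misleading---since you work only on $\{|\xi|\ge c\}$, no splitting is needed and a single geometric bound suffices---but this is cosmetic. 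Your verification that $u'$ is homogeneous for \emph{all} $s>0$, not just $s=2$, via the remainder $v_s(x,\xi)=u(x,\delta_s\xi)-s^mu(x,\xi)\in\mathcal{S}_G$ and the observation that $2^{-mk}v_s(x,\delta_{2^k}\xi)\to0$, is clean and correct.
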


We also recall the following basic fact about derivatives of homogeneous functions.

\begin{lemma} \label{15}
Let $m \in \R$ and fix a $d$-tuple $\beta:=(\beta_1,...,\beta_d)$.
\begin{enumerate}
\item{Let $u \in \mathcal{H}_G^m(U \times \R^d)$. Then $\partial_\xi^\beta u \in \mathcal{H}_G^{m-|\beta|}(U \times \R^d)$. }
\item{Let $u \in \mathcal{H} \mathcal{S}_{G}^m(U \times \R^{d})$. Then $\partial_\xi^\beta u \in \mathcal{H} \mathcal{S}_{G}^{m - |\beta|}(U \times \R^{d})$.}
\end{enumerate}
\end{lemma}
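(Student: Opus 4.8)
The plan is to reduce everything to the case of a single partial derivative $\partial_{\xi_j}$ and then iterate, since $\partial_\xi^\beta = \partial_{\xi_1}^{\beta_1}\cdots\partial_{\xi_d}^{\beta_d}$ and the homogeneous weight $|\beta| = \rho_1\beta_1 + \cdots + \rho_d\beta_d$ is additive in exactly the way needed. The only inputs are the chain rule for the diagonal dilation $\delta_s$ and the (immediate) fact that $\mathcal{S}_G(U\times\R^d)$ is stable under $\partial_{\xi_j}$, which follows at once from the estimates in Definition \ref{9}.

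For part (1), I would differentiate the homogeneity identity $u(x,\delta_s\xi) = s^m u(x,\xi)$ with respect to $\xi_j$. As the $j$-th component of $\delta_s\xi$ equals $s^{\rho_j}\xi_j$, the chain rule gives $s^{\rho_j}(\partial_{\xi_j}u)(x,\delta_s\xi) = s^m(\partial_{\xi_j}u)(x,\xi)$, hence $(\partial_{\xi_j}u)(x,\delta_s\xi) = s^{m-\rho_j}(\partial_{\xi_j}u)(x,\xi)$ for all $\xi \in \R^d\setminus\{0\}$, $x \in U$ and $s > 0$. This says $\partial_{\xi_j}u \in \mathcal{H}_G^{m-\rho_j}(U\times\R^d)$, and iterating over the entries of $\beta$ yields $\partial_\xi^\beta u \in \mathcal{H}_G^{m-|\beta|}(U\times\R^d)$.

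For part (2), fix $s > 0$ and set $v_s(x,\xi) := u(x,\delta_s\xi) - s^m u(x,\xi)$, which lies in $\mathcal{S}_G(U\times\R^d)$ by hypothesis; then $\partial_\xi^\beta v_s \in \mathcal{S}_G(U\times\R^d)$ as well. Applying $\partial_\xi^\beta$ and using the chain rule as above (each $\partial_{\xi_j}$ contributes a factor $s^{\rho_j}$, for a total factor $s^{|\beta|}$ on the first term) gives
\[
\partial_\xi^\beta v_s(x,\xi) = s^{|\beta|}(\partial_\xi^\beta u)(x,\delta_s\xi) - s^m(\partial_\xi^\beta u)(x,\xi).
\]
Dividing by the fixed positive constant $s^{|\beta|}$ — which keeps us inside the vector space $\mathcal{S}_G(U\times\R^d)$ — shows that $(x,\xi)\mapsto (\partial_\xi^\beta u)(x,\delta_s\xi) - s^{m-|\beta|}(\partial_\xi^\beta u)(x,\xi)$ belongs to $\mathcal{S}_G(U\times\R^d)$ for every $s > 0$, i.e. $\partial_\xi^\beta u \in \mathcal{H}\mathcal{S}_G^{m-|\beta|}(U\times\R^d)$.

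I do not expect a genuine obstacle: the definition of the homogeneous weight $|\beta| = \sum_j\rho_j\beta_j$ is precisely what makes the powers of $s$ bookkeep correctly. The only points to be careful about are that $\mathcal{S}_G$ is closed under $\partial_\xi$ and under multiplication by an $s$-dependent scalar, both clear from Definition \ref{9}. (Alternatively, part (2) could be deduced from Theorem \ref{1}: on $U\times K^c$ write $u = u' + u''$ with $u' \in \mathcal{H}_G^m$ and $u'' \in \mathcal{S}_G$, differentiate, apply part (1) to $u'$, and repackage with Lemma \ref{42} — but the direct computation is shorter.)
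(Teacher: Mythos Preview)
Your proof is correct and is exactly the standard chain-rule argument one would expect. The paper itself does not prove this lemma at all --- it simply states it as a basic fact and moves on --- so there is nothing to compare against; your argument supplies precisely the routine verification the authors left to the reader.
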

 


\section{Restricting functions which are homogeneous modulo Schwartz.}
\label{sec:section3}

\subsection{Relation with symbol class funtion.}
\label{subsection:globalstrat}

In the following sections, let $u \in \mathcal{H} \mathcal{S}_G^m(U \times \R^{d+1}) $. 
Our goal is to show that the restriction of $u$ to $U \times \R^{d} \times \{ 1 \}$ is a polyhomogeneous symbol. First, we shall show that it is a symbol class function. We begin with a lemma.



\begin{lemma} \label{16}
Let $u \in C^\infty(U \times \R^{d+1} \setminus \{0 \})$ homogeneous of order $m$ on the nose in the variable $\xi$ for the action $\tilde{\delta}_s$. Then $a(x,\xi):=u(x,\xi,1) \in S_G^m(U \times \R^d)$.
\end{lemma}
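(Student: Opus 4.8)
The plan is to transfer every derivative onto $u$ \emph{before} estimating, so that we only ever need a crude pointwise bound for a function that is homogeneous on the nose. Since the last variable is held equal to the constant $1$, the chain rule gives, for any multi-indices $\alpha$ in $x$ and $\beta$ in $\xi$,
\[
\partial_x^\alpha\partial_\xi^\beta a(x,\xi)=(\partial_x^\alpha\partial_\xi^\beta u)(x,\xi,1).
\]
By Lemma \ref{15} applied on $\R^{d+1}$ with the dilations $\tilde{\delta}_s$ (differentiating in $\xi_j$ lowers the homogeneous order by $\rho_j$, and differentiating in $x$ does not change it), the function $v:=\partial_x^\alpha\partial_\xi^\beta u$ lies in $C^\infty(U\times(\R^{d+1}\setminus\{0\}))$ and is homogeneous of order $m-|\beta|$ on the nose for $\tilde{\delta}_s$. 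Thus it suffices to prove the following: for every $v\in C^\infty(U\times(\R^{d+1}\setminus\{0\}))$ which is homogeneous of order $\mu$ on the nose and every compact $K\subset U$, there is $C_K>0$ with $|v(x,\xi,1)|\le C_K(1+|\xi|)^{\mu}$ for all $x\in K$, $\xi\in\R^d$. Applying this with $\mu=m-|\beta|$ to each $\partial_x^\alpha\partial_\xi^\beta u$ yields exactly the symbol estimate \eqref{151}, with a constant depending on $K,\alpha,\beta$.

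To prove the claim, fix a (continuous) homogeneous quasi-norm $|\cdot|$ on $\R^d$ and treat $|\xi|\le 1$ and $|\xi|\ge 1$ separately. For $|\xi|\le1$ the set $K\times\big(\{|\xi|\le 1\}\times\{1\}\big)$ is a compact subset of $U\times(\R^{d+1}\setminus\{0\})$ — it avoids the origin since its last coordinate equals $1$ — so $|v|$ is bounded on it, while $(1+|\xi|)^{\mu}$ stays between two positive constants on $\{|\xi|\le 1\}$; this settles that range. For $|\xi|\ge 1$ put $s:=|\xi|\ge 1$ and $\eta:=\delta_{1/s}\xi$, so that $|\eta|=1$ and $\tilde{\delta}_s(\eta,1/s)=(\delta_s\eta,\,s\cdot s^{-1})=(\xi,1)$; homogeneity gives $v(x,\xi,1)=s^{\mu}v(x,\eta,1/s)$. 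The point $(\eta,1/s)$ runs over $\Sigma:=\{(\eta,\tau):|\eta|=1,\ 0\le\tau\le1\}$, which is compact (the unit sphere of $|\cdot|$ is bounded by \eqref{148} and closed) and contained in $\R^{d+1}\setminus\{0\}$. Hence $|v|\le M_K$ on $K\times\Sigma$ and
\[
|v(x,\xi,1)|\le M_K\,s^{\mu}=M_K\,|\xi|^{\mu}\le M_K\,2^{|\mu|}(1+|\xi|)^{\mu},
\]
using $|\xi|\le 1+|\xi|\le 2|\xi|$. Combining the two ranges proves the claim, hence the lemma.

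The argument is essentially bookkeeping; the one structural point that must not be missed is that fixing $t=1$ keeps the argument of $u$ away from the origin of $\R^{d+1}$. This is what makes the sets above compact and lets us exploit the mere continuity of $v$, and it is also where the positivity of the augmenting weight $\rho_{d+1}=1$ enters. Note in particular that no regularity of the quasi-norm is needed, precisely because all differentiations were moved onto $u$ at the outset.
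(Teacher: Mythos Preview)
Your proof is correct and follows essentially the same approach as the paper: reduce to a pointwise estimate by invoking Lemma \ref{15} for the derivatives, then use homogeneity to map $(\xi,1)$ onto a compact subset of $\R^{d+1}\setminus\{0\}$. The only cosmetic difference is that the paper rescales by the quasi-norm $|(\xi,1)|=|\xi|+1$ on $\R^{d+1}$, which lands directly on the unit sphere and thereby avoids your case split $|\xi|\le 1$ versus $|\xi|\ge 1$.
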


\begin{proof}
Fix  a homogeneous norm $|.|$ on $\R^d$ and the associated homogeneous norm $|(\xi,t)|=|\xi|+|t|$ on $\R^{d+1}$.  Fix also a compact set $K \subset U$. Then we have, for all $\xi \in \R^d$ and $x \in K$ :
\begin{align}
|u(x,\xi,1) |& = (|\xi|+1)^m u \Big(x,\frac{\xi}{|(\xi,1)|},\frac{1}{|(\xi,1)|} \Big) \\
& \leq C(|\xi|+1)^m,
\end{align}
where $C=max \{ |u(x,\xi,t)|, ~ x \in K, |(\xi,t)|=1 \}$. This proves the inequality \eqref{151} when $\alpha=\beta=0$. The other cases of \eqref{151} follow using Lemma \ref{15}.
\end{proof}
Thanks to this previous lemma and Theorem \ref{1} we can prove:
\begin{proposition} \label{18}
Let  $u \in \mathcal{H} \mathcal{S}_G^m(U \times \R^{d+1})$ . We set  $a_0(x,\xi)=u(x,\xi,0)$ and $a(x,\xi)=u(x,\xi,1)$.  Then we get: 
\begin{enumerate}
\item{$a_0 \in  \mathcal{H} \mathcal{S}_G^m(U \times \R^{d})$.}
\item{$a \in S_G^m(U \times \R^d)$.}
\end{enumerate}
\end{proposition}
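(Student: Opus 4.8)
The plan is to use Theorem \ref{1} to decompose $u$ on a suitable region and then extract the two restrictions.

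\textbf{Proof of statement (2).} This is essentially Lemma \ref{16} combined with Theorem \ref{1}. Fix a compact neighbourhood $K$ of $0$ in $\R^{d+1}$. By Theorem \ref{1}, there exist $u' \in \mathcal{H}_G^m(U \times \R^{d+1})$ and $u'' \in \mathcal{S}_G(U \times \R^{d+1})$ such that $u = u' + u''$ on $U \times K^c$. Choose $K$ so that $K^c$ contains $U \times \{(\xi,1) : \xi \in \R^d\}$; for instance $K = \{|(\xi,t)| \leq 1/2\}$ works since then $(\xi,1)$ always has $|(\xi,1)| \geq 1 > 1/2$. Restricting to $t=1$, we get $a(x,\xi) = u'(x,\xi,1) + u''(x,\xi,1)$. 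Now $u'(x,\xi,1) \in S_G^m(U \times \R^d)$ by Lemma \ref{16}, and $u''(x,\xi,1) \in \mathcal{S}_G(U \times \R^d) \subset S_G^m(U \times \R^d)$ by restricting the Schwartz estimates to $t=1$ (the homogeneous quasi-norm $|(\xi,1)| = |\xi| + 1$ dominates $1+|\xi|$ up to constants). Hence $a \in S_G^m(U \times \R^d)$.

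\textbf{Proof of statement (1).} For the restriction to $t=0$ we argue directly from the definition of $\mathcal{H}\mathcal{S}_G^m$. Let $s \in \R_+^*$. Since $u \in \mathcal{H}\mathcal{S}_G^m(U \times \R^{d+1})$, the function $(x,\xi,t) \mapsto u(x,\delta_s\xi, st) - s^m u(x,\xi,t)$ lies in $\mathcal{S}_G(U \times \R^{d+1})$, meaning it satisfies rapid-decay estimates in $(\xi,t)$ uniformly for $x$ in compacts. Restricting this to $t = 0$ (using $\tilde\delta_s(\xi,0) = (\delta_s\xi, 0)$), the function $(x,\xi) \mapsto u(x,\delta_s\xi,0) - s^m u(x,\xi,0) = a_0(x,\delta_s\xi) - s^m a_0(x,\xi)$ satisfies the same rapid-decay estimates in $\xi$; more precisely, for every compact $K \subset U$, every $k \in \N$, every $(\alpha,\beta)$, we have $|D_x^\alpha D_\xi^\beta (a_0(x,\delta_s\xi) - s^m a_0(x,\xi))| \leq C (1 + |(\xi,0)|)^{-k} = C(1+|\xi|)^{-k}$, which is exactly the condition for membership in $\mathcal{S}_G(U \times \R^d)$. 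Therefore $a_0 \in \mathcal{H}\mathcal{S}_G^m(U \times \R^d)$.

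The only mild subtlety is matching the homogeneous quasi-norm on $\R^{d+1}$ with the one on $\R^d$ after restriction; since $|\cdot|$ restricted to the hyperplane $t=0$ is (equivalent to) a homogeneous quasi-norm on $\R^d$ for the dilations $\delta_s$, all the Schwartz estimates transfer with at most a change of constants, as noted in the Remark following Definition \ref{9}. No genuine obstacle arises: statement (2) is Lemma \ref{16} plus Theorem \ref{1}, and statement (1) is immediate from the definitions by setting $t=0$.
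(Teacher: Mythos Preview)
Your proof is correct and follows exactly the approach the paper indicates: the text preceding Proposition \ref{18} states only that it follows from Lemma \ref{16} and Theorem \ref{1}, and you have filled in precisely those details for part (2), while part (1) is indeed immediate from the definition by restricting the Schwartz-class defect to $t=0$. The minor notational slip (writing ``$K^c$ contains $U \times \{(\xi,1):\xi\in\R^d\}$'' when $K\subset\R^{d+1}$) is harmless and your intent is clear.
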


\textbf{Remark:}
Note that the restriction of $u$ at $t=1$ is not homogeneous modulo Schwartz. This is because the dilation action $\tilde{\delta_s}$ does not restrict to $U \times \R^d \times \{1\} $.

\subsection{Construction of the asymptotic series.}
\label{subsection:induction}

We start this section with a result, which will be an useful trick in the proof of Theorem \ref{29} to come.

\begin{proposition} \label{20}
Let $f \in \mathcal{S}_G(U \times \R^d)$ and let $g \in C_c^\infty(\R)$ whose support is included in the compact interval $[a,b]$ where $a>0$. Then the function $F : (x,\xi,t) \mapsto f(x,\delta_{t^{-1}} \xi) g(t)$ belongs to $\mathcal{S}_G(U \times \R^{d+1})$.
\end{proposition}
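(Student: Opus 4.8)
\textbf{Proof plan for Proposition \ref{20}.}

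The plan is to reduce the Schwartz estimate \eqref{149} for $F$ to the corresponding estimate for $f$, using the polynomial-type control that the dilations $\delta_s$ exert on derivatives. First I would observe that, since $g$ is supported in $[a,b]$ with $a>0$, the variable $t$ stays in a compact set bounded away from $0$; consequently $t^{-1}$ ranges over the compact interval $[b^{-1},a^{-1}]$, so $\delta_{t^{-1}}$ is a "bounded family" of dilations. The plan is to show that for each compact $K \subset U$, each $k\in\N$ and each triple of multi-indices/orders $(\alpha,\beta,\ell)$ in the variables $(x,\xi,t)$ one has
\begin{equation}
|D_x^\alpha D_\xi^\beta D_t^\ell F(x,\xi,t)| \leq C_{\alpha,\beta,\ell,k,K}\,(1+|(\xi,t)|)^{-k}
\end{equation}
for $(x,\xi,t) \in K \times \R^{d+1}$, which is exactly membership in $\mathcal{S}_G(U \times \R^{d+1})$.

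The key steps, in order: (1) Handle the $D_x^\alpha$ derivatives trivially — they commute with the substitution and with multiplication by $g(t)$, so it suffices to treat $\alpha=0$ and invoke the estimate \eqref{149} for $D_x^\alpha f$. (2) Compute $D_\xi^\beta$ of $f(x,\delta_{t^{-1}}\xi)$: by the chain rule this is $t^{-|\beta|}(D_\xi^\beta f)(x,\delta_{t^{-1}}\xi)$, where $|\beta|=\rho_1\beta_1+\dots+\rho_d\beta_d$ is the homogeneous order; since $t^{-1}\in[b^{-1},a^{-1}]$, the factor $t^{-|\beta|}$ is bounded. (3) Compute $D_t^\ell$: each $t$-derivative either hits $g(t)$ (producing $g^{(j)}(t)$, still compactly supported in $[a,b]$, hence bounded) or hits $f(x,\delta_{t^{-1}}\xi)$; a $t$-derivative of the latter brings down, by the chain rule, terms of the form $(\text{bounded function of }t)\cdot\xi_j\,t^{-1}\cdot(\partial_{\xi_j}f)(x,\delta_{t^{-1}}\xi)$ — i.e. each differentiation in $t$ costs at most one power of some $\xi_j$ times a bounded coefficient and passes to a first-order $\xi$-derivative of $f$. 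After $\ell$ such steps we get a finite sum of terms $p(\xi,t)\,(D_\xi^\gamma f)(x,\delta_{t^{-1}}\xi)\,g^{(j)}(t)$ where $p$ is polynomial in $\xi$ of homogeneous degree $\leq$ (some bound depending on $\ell$) with coefficients bounded in $t\in[a,b]$, and $|\gamma|$ is bounded. (4) Bound each such term: since $g^{(j)}$ confines $t$ to $[a,b]$, we may assume $t\in[a,b]$; then $|\delta_{t^{-1}}\xi|$ is comparable to $|\xi|$ (up to the constants $b^{-1},a^{-1}$ by homogeneity of the quasi-norm), and also $|(\xi,t)| = |\xi|+|t| \leq |\xi| + b$, so $(1+|(\xi,t)|)$ and $(1+|\xi|)$ and $(1+|\delta_{t^{-1}}\xi|)$ are all mutually comparable on the support. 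Applying \eqref{149} to $D_\xi^\gamma f$ with a sufficiently large decay exponent $k'$ absorbs the polynomial factor $p(\xi,t)$ and yields decay $(1+|(\xi,t)|)^{-k}$ for any prescribed $k$. This gives the desired estimate and hence $F \in \mathcal{S}_G(U\times\R^{d+1})$.

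The main obstacle — really the only nontrivial bookkeeping — is step (3): carefully tracking that repeated $t$-differentiation of $f(x,\delta_{t^{-1}}\xi)$ produces only terms of the controlled form above, in particular that each extra $\xi$-power is always accompanied by a higher-order $\xi$-derivative of $f$ (which decays even faster), so that the polynomial growth is always dominated by the Schwartz decay of $f$. A clean way to organize this is to note that $\partial_t\big[f(x,\delta_{t^{-1}}\xi)\big] = -t^{-1}\sum_{j=1}^d \rho_j (\delta_{t^{-1}}\xi)_j (\partial_{\xi_j}f)(x,\delta_{t^{-1}}\xi)$, and that each quantity $(\delta_{t^{-1}}\xi)_j (\partial_{\xi_j}f)(x,\delta_{t^{-1}}\xi)$ is, by homogeneity considerations and \eqref{149}, again rapidly decreasing and stable under further $t$-differentiation; an easy induction on $\ell$ then closes the argument. (Alternatively, one can note that $f \in \mathcal{S}_G$ iff $\xi \mapsto f(x,\xi)$ lies in the ordinary Schwartz space $\mathcal{S}(\R^d)$ locally uniformly in $x$ with all $x$-derivatives, reducing everything to the classical fact that $\mathcal{S}(\R^d)$ is stable under the smooth bounded family of linear changes of variable $\xi \mapsto \delta_{t^{-1}}\xi$ and multiplication by the bump $g(t)$ — but the direct estimate is transparent enough that I would present it as above.)
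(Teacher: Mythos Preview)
Your proof plan is correct and complete; in fact the paper itself does not give a proof of this proposition, leaving it to the reader with only the heuristic remark that one is multiplying a cut-off in $t$ whose support avoids $0$ by a $t$-parametrised family of dilated Schwartz functions. Your argument is precisely the natural way to make that remark rigorous, and the bookkeeping you identify in step~(3) --- controlling the extra powers of $\xi_j$ produced by $t$-differentiation via the accompanying extra $\xi$-derivatives of $f$ --- is the only point requiring care.
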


The proof is left to the reader. We point out that the statement is credible because we multiply a cut-off function in the $t$ variable, whose support avoids $0$, with a $t$-parametrized family of $\delta_{t^{-1}}$-dilated Schwartz functions. \bg
Now we consider a smooth partition of unity $ \chi_0 + \chi_1 = 1$ on $\R$, where the smooth cut-off function $\chi_0$ defined on $\R$ satisfies:

 \begin{equation} \label{153}
\chi_0(t) = \left\{
    \begin{array}{ll}
        0 & \mbox{if } |t| \geq 2,  \\
        1 & \mbox{if } |t| \leq 1.
    \end{array}
\right. 
\end{equation}


The following result will be used several times in the future, consult proof of Proposition \ref{27}.

\begin{theoreme} \label{29}
Given $a\in \mathcal{H} \mathcal{S}^m(U \times \R^{d})$, we define:

\begin{equation}\label{23}
b(x,\xi,t):=\chi_0(t) a(x,\xi)+ \chi_1(t) |t|^m  a(x,\delta_{|t|^{-1}} \xi).
\end{equation}

Then $b \in \mathcal{H} \mathcal{S}_G^m(U \times \R^{d+1})$.

\end{theoreme}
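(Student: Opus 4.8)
The plan is to verify the defining property \eqref{150} for $b$ directly, using the group-subgroup reduction remarked after Definition \ref{10}, so that it suffices to check
\[
  (x,\xi,t) \mapsto b(x,\tilde\delta_s(\xi,t)) - s^m b(x,\xi,t) \in \mathcal{S}_G(U \times \R^{d+1})
\]
for $s$ ranging over, say, $[1,2]$, or in fact for all $s>0$ since that is no harder here. Recall $\tilde\delta_s(\xi,t) = (\delta_s\xi, st)$. First I would expand $b(x,\tilde\delta_s(\xi,t)) = \chi_0(st)\,a(x,\delta_s\xi) + \chi_1(st)\,|st|^m a(x,\delta_{|st|^{-1}}\delta_s\xi)$, and note the crucial simplification $\delta_{|st|^{-1}}\delta_s\xi = \delta_{|t|^{-1}}\xi$ (using $\delta_{s/|st|} = \delta_{1/|t|}$ for $s>0$), so the second term becomes $\chi_1(st)\,s^m|t|^m a(x,\delta_{|t|^{-1}}\xi)$. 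Thus
\[
  b(x,\tilde\delta_s(\xi,t)) = \chi_0(st)\,a(x,\delta_s\xi) + \chi_1(st)\,s^m|t|^m a(x,\delta_{|t|^{-1}}\xi).
\]

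Next I would organize the difference $b(x,\tilde\delta_s(\xi,t)) - s^m b(x,\xi,t)$ by inserting $\pm \chi_0(st) s^m a(x,\xi)$ and regrouping. Writing $r(x,\xi) := a(x,\delta_s\xi) - s^m a(x,\xi)$, which lies in $\mathcal{S}_G(U\times\R^d)$ because $a \in \mathcal{H}\mathcal{S}^m(U\times\R^d)$, one gets a sum of terms of three types: (i) $\chi_0(st)\,r(x,\xi)$; (ii) $\big(\chi_0(st) - \chi_0(t)\big)\,s^m a(x,\xi)$ together with $\big(\chi_1(st) - \chi_1(t)\big)\,s^m |t|^m a(x,\delta_{|t|^{-1}}\xi)$, which using $\chi_0 + \chi_1 = 1$ combine into $\big(\chi_0(t) - \chi_0(st)\big)\, s^m\big(a(x,\xi) - |t|^m a(x,\delta_{|t|^{-1}}\xi)\big)$; (iii) the ``main'' cut-off piece $\chi_1(t)\, s^m |t|^m\big( a(x,\delta_{|t|^{-1}}\delta_s\delta_{|t|}\cdot\delta_{|t|^{-1}}\xi) - \dots\big)$ — more cleanly, from the second term of $b$ itself we have $\chi_1(t)\,s^m|t|^m a(x,\delta_{|t|^{-1}}\xi)$ on both sides once $st$ is replaced by $t$ in the cut-off, so type (iii) contributes nothing extra beyond type (ii). So really the difference is a sum of type-(i) and type-(ii) contributions.

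For type (i): $\chi_0(st)$ is supported in $|t|\le 2/s$, hence in a fixed compact $t$-interval, and $r\in\mathcal{S}_G(U\times\R^d)$; a function on $U\times\R^{d+1}$ that is Schwartz in $\xi$ uniformly on compacta in $x$ and compactly supported (smoothly) in $t$ is clearly in $\mathcal{S}_G(U\times\R^{d+1})$, since decay in $|(\xi,t)|$ follows from decay in $|\xi|$ on the bounded $t$-range. For type (ii): the factor $\chi_0(t) - \chi_0(st)$ is smooth, compactly supported in $t$ \emph{away from} $t=0$ (since $\chi_0(t)=\chi_0(st)=1$ for $t$ near $0$ and $=0$ for $|t|$ large), say supported in $a\le |t|\le b$ with $a>0$. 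On that support we must show $(x,\xi,t)\mapsto a(x,\xi) - |t|^m a(x,\delta_{|t|^{-1}}\xi)$, cut off by $\chi_0(t)-\chi_0(st)$, is Schwartz in $(\xi,t)$ uniformly on $x$-compacta. Here I would write $a = a' + a''$ on $U\times K^c$ via Theorem \ref{1}, with $a'$ homogeneous of order $m$ and $a''\in\mathcal{S}_G$; for large $|\xi|$ (the region that matters for Schwartz decay) we have $|t|^m a'(x,\delta_{|t|^{-1}}\xi) = |t|^m |t|^{-m} a'(x,\xi) = a'(x,\xi)$, so the $a'$ parts cancel exactly, leaving $a''(x,\xi) - |t|^m a''(x,\delta_{|t|^{-1}}\xi)$ times the cut-off, which is Schwartz: the first summand by $a''\in\mathcal{S}_G$ and a bounded $t$-support, the second by Proposition \ref{20} (applied with $f = a''$, $g(t) = (\chi_0(t)-\chi_0(st))\,s^m|t|^m$, whose support lies in $[a,b]$ with $a>0$).

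The main obstacle is bookkeeping: correctly using the dilation identity $\delta_{|st|^{-1}}\delta_s = \delta_{|t|^{-1}}$ (so that only $s>0$, not the sign of $t$, is reduced out), and carefully localizing in the $t$ variable to land in the hypotheses of Theorem \ref{1} and Proposition \ref{20} — in particular observing that $a'$ and $|t|^m a'(x,\delta_{|t|^{-1}}\cdot)$ cancel on the region $\xi\in K^c$ where decay is tested, so that only the Schwartz remainder $a''$ survives. Near $\xi = 0$ everything is smooth and the $t$-support is compact and bounded away from $0$ for the delicate term, so there is no issue there. Assembling the three (really two) pieces and invoking that $\mathcal{S}_G(U\times\R^{d+1})$ is a vector space completes the proof.
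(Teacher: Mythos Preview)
Your approach is essentially identical to the paper's: the same splitting into the Schwartz piece $\chi_0(st)\,[a(x,\delta_s\xi)-s^m a(x,\xi)]$ and the piece $s^m(\chi_0(st)-\chi_0(t))\big(a(x,\xi)-|t|^m a(x,\delta_{|t|^{-1}}\xi)\big)$, followed by the Theorem~\ref{1} decomposition $a=a'+a''$ on $K^c$ (so that the $a'$-parts cancel by exact homogeneity) and Proposition~\ref{20} for the $a''$-part. Two minor remarks: your combining step has a harmless sign slip (it should read $\chi_0(st)-\chi_0(t)$, not $\chi_0(t)-\chi_0(st)$), and the paper is a bit more explicit where you say ``for large $|\xi|$'', enlarging $K$ to a $\tilde K$ so that $\xi\in\tilde K^c$ forces $\delta_{|t|^{-1}}\xi\in K^c$ on the $t$-support of $\chi_0(st)-\chi_0(t)$ --- exactly so that the decomposition of $a$ applies to both arguments simultaneously.
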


\textbf{Remark:} For any  $ |t| \leq 1 $, note that, for any $\xi \in \R^d$ and $x \in U$, we have $b(x,\xi,t)=a(x,\xi)$.

\begin{proof}[Proof of Theorem \ref{29}]
Recall from the remark after Definition \ref{10} that it suffices to show the result for $s \in [1,2]$.
It is obvious that $t \mapsto \chi_0(st) \in C_c^\infty(\R)$ and therefore, we can claim that $(x,\xi,t) \mapsto [a(x,\delta_s \xi) - s^m a(x,\xi)] \chi_0(st) \in \mathcal{S}_G(U \times \R^{d+1})$.  We compute:
\begin{align} 
b(x,\delta_s \xi,st) - s^mb(x,\xi,t) & = \underbrace{ [a(x,\delta_s \xi) - s^m a(x, \xi)] \chi_0(st) }_{\in ~ \mathcal{S}_G(U \times \R^{d+1})} \\
& + s^m \Big( \underbrace{a(x,\xi)[\chi_0(st)- \chi_0(t)]  + a(x,\delta_{|t|^{-1}} \xi) |t|^m [\chi_1(st) - \chi_1(t)]}_{:= \tilde{w}} \Big).  \label{37}
\end{align}

Therefore, it remains to prove that $\tilde{w} \in  \mathcal{S}_G (U \times \R^{d+1})$. Using $\chi_1=1-\chi_0$, we have:

\begin{equation} \label{39}
s^m \tilde{w}(x,\xi,t)= s^m \Big( \chi_0(st) - \chi_0(t) \Big) \Big(a(x,\xi) - |t|^m a(x,\delta_{|t|^{-1}} \xi) \Big).
\end{equation}
Then we apply the decomposition from Theorem \ref{1} to the function $a$ and for the compact $K= \overline{B(0,R)}$, the closed ball of radius $R>0$ for the homogeneous quasi-norm:

\begin{equation} \label{154}
\forall ~ x \in U,~  \forall ~ \xi \in  K^c, ~~ a(x,\xi)=a'(x,\xi)+a''(x,\xi),
\end{equation}

where $a' \in \mathcal{H}_G^m(U \times \R^d)$ and  $a'' \in \mathcal{S}_G(U \times \R^d)$.  

We cannot apply this previous decomposition directly to the function $(x,\xi) \mapsto a(x,\delta_{|t|^{-1}} \xi)$ in \eqref{39}, with the same compact $K$ for all $t$. This motivates the following construction.

Thanks to $\chi_0$ in \eqref{39}, note that for fixed $s \in [1,2]$, there exists $0<M_1 \leq 1<M_2$  such that $\tilde{w}(x,\xi,t)=0$, for all $x \in U, \xi \in \R^d$ and for all 
$t \in \Big( [-M_2,-M_1] \cup [M_1,M_2] \Big)^c$.
Taking the compact $\tilde{K}=\overline{B_0(0,\tilde{R})}$, where $\tilde{R} > M_2R>R$ we have:

\begin{equation} \label{152}
\forall ~  \xi \in \tilde{K}^c, ~ \forall ~t \in [-M_2,-M_1] \cup [M_1,M_2], ~~ \delta_{t^{-1}} \xi \in K^c.
\end{equation}

In particular, it is clear that $K \subset \tilde{K}$ and so decomposition \eqref{154} remains true for $\xi \in  \tilde{K}^c$.

Then in equation \eqref{39} we write, for all $x \in U$, for all $\xi \in \R^d$, for all $t \in \R$ :

\begin{align} 
s^m \chi_{\tilde{K}^c}(\xi) & \tilde{w}(x,\xi,t) =s^m[\chi_0(st) - \chi_0(t)] ~ \chi_{\tilde{K}^c}(\xi) \hspace{-5mm} \underbrace{(a'(x,\xi) - |t|^m a'(x,\delta_{|t|^{-1}} \xi))}_{=0, ~ because ~ a' ~ homogeneous ~ on ~ the ~ nose } \hspace{-5mm} , \nonumber \\
\label{40}
& +\chi_{\tilde{K}^c}(\xi) \underbrace{s^m  [\chi_0(st) - \chi_0(t)] a''(x,\xi)}_{ \in ~ \mathcal{S}_G(U \times \R^{d+1}) } + \chi_{\tilde{K}^c}(\xi)(s|t|)^m [\chi_0(st) - \chi_0(t)] a''(x,\delta_{|t|^{-1}}\xi),
\end{align}
where  $\chi_{\tilde{K}^c} \in C_c^\infty(\R^d)$ is a smooth cut-off function, equal to $1$ on $\tilde{K}^c$ and $0$ on $K$. Using proposition \ref{20} we see that $(x,\xi,t) \mapsto (s|t|)^m [\chi_0(st) - \chi_0(t)] a''(x,\delta_{|t|^{-1}}\xi) \in \mathcal{S}_G(U \times \R^{d+1})$.  
Therefore, with equation \eqref{40}, this proves $\chi_{\tilde{K}^c} s^m \tilde{w} \in \mathcal{S}_G (U \times \R^d)$.
Since $(x,\xi,t) \mapsto (1-\chi_{\tilde{K}}(w)) \tilde{w}(x,w,t)$ has compact support in $(\xi,t)$, the reader can easily use equation \eqref{37} to end the proof.
\end{proof}

\subsection{Proof of $2) \Rightarrow 1)$ of Theorem \ref{2}.}
\label{subsection:endproof}
We now take $u \in \mathcal{H} \mathcal{S}_G^m(U \times \R^{d+1})$ from Theorem \ref{2}. 
Also recall that we put $a(x,\xi)=u(x,\xi,1)$. We already showed in Proposition \ref{18} that $a$ is a  symbol class function.  Our goal is now to show that it is polyhomogeneous.
We begin with a straightforward lemma whose proof is left to the reader.

\begin{lemma} \label{22}
We define the subspace $I_0^m:=\lbrace f \in \mathcal{H} \mathcal{S}_G^m(U \times \R^{d+1}) ~ | ~ f(x,\xi,0)=0 \rbrace$ of $\mathcal{H} \mathcal{S}_G^m(U \times \R^{d+1})$. The following map is a linear isomorphism of (Fréchet) spaces: 
$$M_t : \mathcal{H} \mathcal{S}_G^{m-1}(U \times \R^{d+1}) \rightarrow I_0^m, ~ M_t(f)(x,\xi)= tf(x,\xi,t).$$
\end{lemma}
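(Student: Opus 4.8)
The plan is to check, in order, that $M_t$ is well defined, injective, surjective, and finally a topological isomorphism of Fréchet spaces; only surjectivity requires genuine work, and within it a small sublemma about Schwartz functions.

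\textbf{Well-definedness and injectivity.} If $f\in\mathcal{H}\mathcal{S}_G^{m-1}(U\times\R^{d+1})$, then $tf$ is smooth and vanishes on $\{t=0\}$, and for every $s\in\R_+^*$,
\[
(tf)(x,\tilde{\delta}_s(\xi,t)) - s^m(tf)(x,\xi,t) = st\bigl[f(x,\delta_s\xi,st) - s^{m-1}f(x,\xi,t)\bigr].
\]
The bracket lies in $\mathcal{S}_G(U\times\R^{d+1})$ by Definition \ref{10}, and multiplication by the coordinate $t$ (and the constant $s$) preserves $\mathcal{S}_G(U\times\R^{d+1})$; hence $M_t(f)=tf\in I_0^m$, so $M_t$ is well defined (and obviously linear). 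It is injective, since $tf\equiv 0$ forces $f\equiv 0$ on $\{t\neq 0\}$, hence everywhere by continuity.

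\textbf{Surjectivity.} Let $g\in I_0^m$. Since $g$ is smooth with $g(x,\xi,0)=0$, Hadamard's lemma produces a smooth function $f(x,\xi,t):=\int_0^1(\partial_t g)(x,\xi,\tau t)\,d\tau$ on $U\times\R^{d+1}$ with $tf=g$; it remains to prove $f\in\mathcal{H}\mathcal{S}_G^{m-1}(U\times\R^{d+1})$. Fix $s\in\R_+^*$ and set $h_s(x,\xi,t):=g(x,\delta_s\xi,st)-s^m g(x,\xi,t)$, so that $h_s\in\mathcal{S}_G(U\times\R^{d+1})$ because $g\in\mathcal{H}\mathcal{S}_G^m(U\times\R^{d+1})$ and $h_s(x,\xi,0)=0$ because $g\in I_0^m$. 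For $t\neq 0$ one computes $f(x,\delta_s\xi,st)-s^{m-1}f(x,\xi,t)=\tfrac1s\,h_s(x,\xi,t)/t$, and since the left-hand side is smooth everywhere (as $f$ is), this identity persists across $\{t=0\}$ by continuity. The whole matter thus reduces to the sublemma: if $h\in\mathcal{S}_G(U\times\R^{d+1})$ and $h(x,\xi,0)=0$, then $h/t\in\mathcal{S}_G(U\times\R^{d+1})$. I would prove this by splitting on $|t|\geq 1$ and $|t|\leq 1$: on $|t|\geq 1$, a Leibniz expansion of $D_t^\gamma\bigl((D_x^\alpha D_\xi^\beta h)/t\bigr)$ reduces every term to $|1/t^{j}|\leq 1$ times a Schwartz estimate on a derivative of $h$; on $|t|\leq 1$, the representation $h/t=\int_0^1(\partial_t h)(x,\xi,\tau t)\,d\tau$, together with the elementary inequality $|(\xi,\tau t)|\geq|(\xi,t)|-1$ for $|\tau t|\leq 1$ (using the quasi-norm $|(\xi,t)|=|\xi|+|t|$ on $\R^{d+1}$, as in Lemma \ref{16}), converts the rapid decay of $\partial_t h$ into rapid decay of $h/t$. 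This sublemma --- in effect, that multiplication by $t$ is injective on $\mathcal{S}_G(U\times\R^{d+1})$ with image exactly $\{h\mid h|_{t=0}=0\}$ --- is the one non-routine ingredient and the place where I expect any difficulty to lie, although it is entirely standard.

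\textbf{Topological isomorphism.} Multiplication by the coordinate $t$ is continuous for the natural Fréchet topologies (its action on the defining seminorms being controlled via the Leibniz rule, exactly as above), so $M_t$ is a continuous linear bijection. The subspace $I_0^m$ is closed in $\mathcal{H}\mathcal{S}_G^m(U\times\R^{d+1})$, being the kernel of the continuous restriction map $u\mapsto u(\cdot,\cdot,0)$ (cf.\ Proposition \ref{18}), hence is itself a Fréchet space; the open mapping theorem then upgrades $M_t$ to a topological isomorphism.
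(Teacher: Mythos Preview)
Your argument is correct and is precisely the natural route; the paper itself leaves this lemma to the reader, so there is no proof to compare against, but the Hadamard-quotient approach together with the sublemma that division by $t$ preserves $\mathcal{S}_G$ on the subspace $\{h|_{t=0}=0\}$ is exactly what is expected. The only caveat is that the paper never spells out the Fr\'echet topology on $\mathcal{H}\mathcal{S}_G^m$, so your open-mapping-theorem step is implicitly assuming a choice of seminorms the paper does not make explicit---but since the lemma is only used (in Proposition~\ref{27}) as a linear bijection, this does not affect anything downstream.
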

Now we can obtain the asymptotic expansion of $a$ by induction.
\begin{proposition} \label{27} \m
Let $u \in  \mathcal{H} \mathcal{S}_G^m(U \times \R^{d+1})$ and put $a(x,\xi)=u(x,\xi,1)$.
We can construct functions:
\begin{equation} \label{94}
u_j \in \mathcal{H}\mathcal{S}_G^{m-j}(U \times \R^{d+1}),
\end{equation}

with the property that if we set $a_j(x,\xi)=u_j(x,\xi,0) \in \mathcal{H}\mathcal{S}_G^{m-j}(U \times \R^{d})$, then:

\begin{equation} \label{95}
\forall ~ j \in \mathbb{N}^*, u_j(x,\xi,1)=a(x,\xi)-\sum_{i=0}^{j-1} a_i(x,\xi).
\end{equation}

\end{proposition}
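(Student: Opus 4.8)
The plan is to construct the functions $u_j$ by induction on $j$, starting with $u_0 = u$ (so that $a_0(x,\xi) = u(x,\xi,0)$, which lies in $\mathcal{H}\mathcal{S}_G^m(U\times\R^d)$ by Proposition \ref{18}). Suppose $u_j \in \mathcal{H}\mathcal{S}_G^{m-j}(U\times\R^{d+1})$ has been constructed with $u_j(x,\xi,1) = a(x,\xi) - \sum_{i=0}^{j-1} a_i(x,\xi)$, where $a_i(x,\xi) = u_i(x,\xi,0)$. The key point is to use Theorem \ref{29} applied to the function $a_j := u_j(\cdot,\cdot,0) \in \mathcal{H}\mathcal{S}_G^{m-j}(U\times\R^d)$: this produces a function $b_j \in \mathcal{H}\mathcal{S}_G^{m-j}(U\times\R^{d+1})$ with $b_j(x,\xi,0) = a_j(x,\xi)$ and — by the Remark following Theorem \ref{29} — with $b_j(x,\xi,1) = a_j(x,\xi)$ (since $|1|\leq 1$). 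Then set $v_j := u_j - b_j$. By construction $v_j \in \mathcal{H}\mathcal{S}_G^{m-j}(U\times\R^{d+1})$ and $v_j(x,\xi,0) = a_j(x,\xi) - a_j(x,\xi) = 0$, so $v_j \in I_0^{m-j}$ in the notation of Lemma \ref{22}.

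Now I would invoke Lemma \ref{22}: since $v_j \in I_0^{m-j}$, there is a unique $u_{j+1} \in \mathcal{H}\mathcal{S}_G^{m-j-1}(U\times\R^{d+1})$ with $M_t(u_{j+1}) = v_j$, that is, $t\, u_{j+1}(x,\xi,t) = v_j(x,\xi,t)$ for all $(x,\xi,t)$. Evaluating at $t=1$ gives
\begin{equation}
u_{j+1}(x,\xi,1) = v_j(x,\xi,1) = u_j(x,\xi,1) - b_j(x,\xi,1) = \Big(a(x,\xi) - \sum_{i=0}^{j-1} a_i(x,\xi)\Big) - a_j(x,\xi) = a(x,\xi) - \sum_{i=0}^{j} a_i(x,\xi),
\end{equation}
which is exactly the required relation \eqref{95} at step $j+1$. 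Setting $a_{j+1}(x,\xi) = u_{j+1}(x,\xi,0)$, which lies in $\mathcal{H}\mathcal{S}_G^{m-j-1}(U\times\R^d)$ by Proposition \ref{18} again, completes the inductive step. This produces the whole sequence $(u_j)_{j\in\N}$ with properties \eqref{94} and \eqref{95}.

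The main obstacle is arranging that $v_j$ vanishes at $t=0$ \emph{and} has the correct value at $t=1$ simultaneously; this is precisely what the slightly delicate construction of $b_j$ in Theorem \ref{29} is designed to achieve — the cut-off $\chi_0$ ensures that $b_j$ agrees with $a_j$ both on a neighbourhood of $t=0$ (in particular at $t=0$, giving the vanishing of $v_j$) and, because $\chi_0(1)$ should be read in the regime $|t|\le 1$ where $b_j(x,\xi,t)=a_j(x,\xi)$, at $t=1$. One should double-check the convention on $\chi_0$ near $|t|=1$: from \eqref{153}, $\chi_0(t)=1$ for $|t|\le 1$, so indeed $b_j(x,\xi,1)=a_j(x,\xi)$. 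Apart from this bookkeeping, every step is a direct application of an already-established result (Theorem \ref{29}, Lemma \ref{22}, Proposition \ref{18}), so no further estimates are needed.
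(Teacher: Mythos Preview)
Your proof is correct and follows essentially the same approach as the paper: both set $u_0=u$, then inductively apply Theorem \ref{29} to $a_j=u_j(\cdot,\cdot,0)$ to build $b_j$, observe that $u_j-b_j$ vanishes at $t=0$, and invoke Lemma \ref{22} to divide by $t$ and obtain $u_{j+1}$. The verification that $b_j(x,\xi,1)=a_j(x,\xi)$ (using $\chi_0(1)=1$ from \eqref{153}) is exactly the step the paper uses to close the induction on \eqref{95}.
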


\begin{proof}
For $j=0$, we put $u_0=u$ and the equation \eqref{95} is trivial.
Suppose that we have constructed the desired functions $u_0,...,u_j$. Then we define the function $b_j$ as in equation \eqref{23}:

\begin{equation} \label{71}
b_j(x,\xi,t):=\chi_0(t) a_j(x,\xi)+ \chi_1(t)|t|^m a_j(x,\delta_{|t|^{-1}} \xi).
\end{equation}

As $a_j \in \mathcal{H} \mathcal{S}_G^{m-j}(U \times \R^d)$, Theorem \ref{29} allows us to claim that $b_j \in \mathcal{H}\mathcal{S}_G^{m-j}(U \times \R^{d+1})$.
Note that $u_j(x,\xi,0)-b_j(x,\xi,0)=0$. Thanks to the isomorphism of Lemma \ref{22} and the induction hypothesis,  there exists $u_{j+1}\in \mathcal{H}\mathcal{S}_G^{m-j-1}(U \times \R^{d+1})$ satisfying $tu_{j+1}=u_j - b_j$ and so it is legitimate to write the following equation:

\begin{equation} \label{21}
u_{j+1}(x,\xi,t)=\frac{u_j(x,\xi,t) - b_j(x,\xi,t)}{t} \in \mathcal{H} \mathcal{S}_G^{m-j-1}(U \times \R^{d+1}).
\end{equation}

We now set, $a_{j+1}:=u_{j+1}|_{t=0} $ and it is an element of $\in \mathcal{H}\mathcal{S}_G^{m-j-1}(U \times \R^{d})$.
It remains to show equality \eqref{95}. This follows from \eqref{21} and the inductive hypothesis, after observing:
\begin{equation}
 b_j(x,\xi,1)=a_j(x,\xi).
\end{equation}

Now the induction is complete and so the proposition is done.
\end{proof}


\begin{proof}[Proof of $2) \Rightarrow 1)$ of Theorem \ref{2}.]
First of all, according to Proposition \ref{18} point 2), $a=u|_{t=1} \in S_G^m(U \times \R^d)$. We now use the Proposition \ref{27} to construct the asymtotic expansion of $a$, consult Definition \ref{12}.
Thanks to Proposition \ref{27},  we can write for every natural number $N$:

\begin{equation} \label{70}
a(x,\xi)=\sum_{j=0}^{N} a_j(x,\xi) + u_{N+1}(x,\xi,1),
\end{equation}

where $a_j \in \mathcal{H} \mathcal{S}_G^{m-j}(U \times \R^d)$ and $u_{N+1} \in \mathcal{H} \mathcal{S}_G^{m-N-1}(U \times \R^{d+1})$.
Again, thanks to Proposition \ref{18} point 2), $u_{N+1}|_{t=1} \in S_G^{m-N-1}(U \times \R^d)$.
Finally, with a slight modification of the way we write the conclusion of Theorem \ref{1}, we can write for every $j$:

\begin{equation} \label{50}
a_j(x,\xi)=\chi_K(\xi)a'_j(x,\xi)+a_j''(x,\xi),
\end{equation}
 
where for every natural number $j$, $a'_j \in \mathcal{H}_G^{m-j}(U \times \R^d)$, $ a_j'' \in \mathcal{S}_G(U \times \R^d)$ and $\chi_K$ is any cut-off function as in Definition \ref{12}. 
Therefore according to Definition \ref{12}, $\sum_{j} a_j'$ is an asymptotic expansion for $a$, 
\end{proof}

\section{Extending polyhomogeneous functions.}
\label{sec:section4}

\subsection{Construction of a good candidate.}
\label{subsec:backtopreli}

Given $a \in S_{phg,G}^m(U \times  \R^d)$ we will now build an element $u \in \mathcal{H} \mathcal{S}_G^m(U \times \R^{d+1})$ such that $a$ is the restriction at time $t=1$ of $u$.
We therefore suppose we have an asymptotic expansion $a \sim \sum_j a_j$ in the sense of Definition \ref{12}.




We set, with $|.|$ being the homogeneous quasi-norm, the cut-off function $\phi \in C^\infty(\R^d)$:
\begin{equation} \label{56}
\phi(\xi) = \left\{
    \begin{array}{ll}
        1 & \mbox{if } |\xi| \geq 1 \\
        0 & \mbox{if } |\xi| \leq  \frac{1}{2} 
    \end{array}
\right.
\end{equation}

The following result is a generalisation of a well-known result, see  \cite[Theorem 3.1 p 40]{Taylor1982PsiDo}  and  \cite[Theorem 6.10 p 36]{Wong2014PsiDO} with more details. 

\begin{proposition} \label{57}
There exists a sequence $(\epsilon_j)$ converging to $0$ so that the series 
$(x,\xi) \mapsto \sigma(x,\xi):=\sum_{j=0}^{+ \infty} \phi(\delta_{\epsilon_j} \xi) a_j(x,\xi)$ belongs to  $S_G^m(U \times \R^d)$.
\end{proposition}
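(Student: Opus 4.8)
The plan is to mimic the classical Borel-type summation argument for asymptotic series of symbols, adapted to the graded dilations. The idea is that although the formal series $\sum_j a_j$ need not converge, one can force convergence by inserting cut-off functions $\phi(\delta_{\epsilon_j}\xi)$ that vanish on a larger and larger neighbourhood of the origin as $j\to\infty$; on that neighbourhood the problematic low-frequency behaviour of $a_j$ is killed, while for $\xi$ fixed only finitely many terms are affected, so the sum is locally finite away from $\xi=0$ and hence smooth.

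First I would record the basic estimates. Each $a_j\in\mathcal H_G^{m-j}(U\times\R^d)$, so by Proposition \ref{92} the function $\phi\,a_j$ lies in $S_G^{m-j}(U\times\R^d)$; more precisely, for every compact $K\subset U$ and multi-indices $\alpha,\beta$ there is a constant $C_{K,\alpha,\beta,j}$ with $|D_x^\alpha D_\xi^\beta(\phi a_j)(x,\xi)|\leq C_{K,\alpha,\beta,j}(1+|\xi|)^{m-j-|\beta|}$. Rescaling, one checks that $\phi(\delta_\epsilon\xi)a_j(x,\xi)$ is supported in $|\xi|\geq \frac1{2\epsilon}$ (using homogeneity of the quasi-norm, $|\delta_\epsilon\xi|=\epsilon|\xi|$), and on that region $(1+|\xi|)^{-1}\leq 2\epsilon$; combining with the homogeneity $a_j(x,\delta_\epsilon\xi)=\epsilon^{m-j}a_j(x,\xi)$ and the chain rule for the $\delta_\epsilon$-derivatives, one gets, for each fixed finite set of $\alpha,\beta$ and each compact $K$, a bound of the form $|D_x^\alpha D_\xi^\beta(\phi(\delta_{\epsilon_j}\cdot)a_j)(x,\xi)|\leq C_{K,\alpha,\beta,j}\,\epsilon_j^{\,j-N_0}(1+|\xi|)^{m-N_0-|\beta|}$ for any chosen threshold $N_0$, where the loss $\epsilon_j^{\,j-N_0}$ can be made $\leq 2^{-j}$ once $j>N_0$ by taking $\epsilon_j$ small enough.

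Next I would run the standard diagonal argument to choose the sequence $(\epsilon_j)$. Enumerate pairs $(K,\alpha,\beta)$ — or rather, for the $j$-th term only finitely many constraints matter: require $\epsilon_j$ so small that $\epsilon_j^{\,j-N}\,C_{K,\alpha,\beta,j}\leq 2^{-j}$ for all $|\alpha|+|\beta|\leq j$, all $N\leq j$, and $K=K_j$ from an exhaustion of $U$. Then for any fixed $K,\alpha,\beta,N$, the tail $\sum_{j>\max(|\alpha|+|\beta|,N)}\phi(\delta_{\epsilon_j}\xi)a_j$ is bounded in the $S_G^{m-N}$-seminorm indexed by $(K,\alpha,\beta)$ by $\sum 2^{-j}<\infty$, with the same estimate for all its $x$- and $\xi$-derivatives, so it converges in $S_G^{m-N}(U\times\R^d)$, in particular uniformly with all derivatives on compacta of $U\times\R^d$. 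Adding back the finitely many initial terms (each a genuine $S_G^{m}$ symbol via Proposition \ref{92}) shows $\sigma\in S_G^m(U\times\R^d)$. Finally I would note that $\sigma$ is smooth: near any $(x_0,\xi_0)$ with $\xi_0\neq 0$ only finitely many terms are nonzero (since $\epsilon_j\to0$ forces $\phi(\delta_{\epsilon_j}\xi)=1$ for large $j$ on a neighbourhood of $\xi_0$, but we actually want the complementary observation that for $\xi$ in a fixed ball all but finitely many terms vanish — wait, that is the wrong direction), so in fact one argues that on any set $\{|\xi|\le R\}$ all terms with $\tfrac1{2\epsilon_j}>R$ vanish, giving local finiteness away from $\xi=0$; combined with the just-proved symbol estimates this also controls behaviour as $|\xi|\to\infty$, and smoothness across $\xi=0$ is automatic from $\sigma\in C^\infty$ by the symbol bounds. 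The main obstacle is purely bookkeeping: arranging the single sequence $(\epsilon_j)$ to simultaneously tame all seminorms, all orders of derivative, and all compact exhaustion sets, which is handled by the diagonal choice above; there is no conceptual difficulty beyond the classical case, only the extra care that derivatives must be taken in the graded sense and that the quasi-norm replaces the Euclidean norm, both of which are absorbed into the constants by \eqref{148}.
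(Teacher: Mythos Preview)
Your approach is correct and is essentially the classical Borel-type summation that the paper itself invokes by reference: the paper does not give a proof of this proposition, merely citing \cite[Theorem 3.1]{Taylor1982PsiDo} and \cite[Theorem 6.10]{Wong2014PsiDO} for the ungraded case and leaving the graded adaptation to the reader. Your diagonal choice of $(\epsilon_j)$, exploiting that $\phi(\delta_{\epsilon_j}\xi)$ is supported in $\{|\xi|\ge \tfrac{1}{2\epsilon_j}\}$ so that one may trade $(1+|\xi|)^{-(j-N)}$ for a factor $(2\epsilon_j)^{j-N}$, is exactly the mechanism the paper later spells out in detail in the proof of Theorem \ref{68} (see equations \eqref{76}--\eqref{77}), so you are in complete agreement with the authors' intended argument.

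Two small points of polish. First, the constant $C_{K,\alpha,\beta,j}$ in your bound on $D_x^\alpha D_\xi^\beta\big(\phi(\delta_{\epsilon_j}\cdot)a_j\big)$ must be shown to be uniform in $\epsilon_j\in(0,1]$; this holds because $\partial_{\xi_k}\big(\phi(\delta_{\epsilon_j}\xi)\big)=\epsilon_j^{\rho_k}(\partial_k\phi)(\delta_{\epsilon_j}\xi)$, and on the support of $\partial_k\phi(\delta_{\epsilon_j}\xi)$ one has $|\xi|\sim\epsilon_j^{-1}$, so $\epsilon_j^{\rho_k}\lesssim(1+|\xi|)^{-\rho_k}$, which is precisely the symbol decay required. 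Second, the parenthetical ``wait, that is the wrong direction'' should be cleaned up: the correct statement, which you do reach, is that on any ball $\{|\xi|\le R\}$ only those $j$ with $\epsilon_j\ge\tfrac{1}{2R}$ contribute, so the sum is locally finite and in particular smooth (indeed identically zero near $\xi=0$).
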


Motivated by this result, we set:

\begin{equation} \label{58}
a_j'(x,\xi):=\phi(\delta_{\epsilon_j} \xi) a_j(x,\xi),
\end{equation}

and readily obtain $a_j' \in \mathcal{H} \mathcal{S}_G^{m-j}(U \times \R^d).$
We also introduce the following partition of unity of  $\R$: 

 \begin{equation} 
\chi_0(t) = \left\{
    \begin{array}{ll}
        0 & \mbox{si } |t| \geq 1 \\
        1 & \mbox{si } |t| \leq \frac{1}{2}
    \end{array}
\right.
\end{equation}

\begin{equation}
\chi_1=1-\chi_0.
\end{equation}

We set:

\begin{equation} \label{60}
b_j(x,\xi,t):=\chi_0(t) a_j'(x,\xi)+\chi_1(t) |t|^{m-j} a_j'(x,\delta_{|t|^{-1}} \xi).
\end{equation}

Note that $b_j$ is constructed on the same model as $b_0$ from equation \eqref{23}.
Therefore, thanks to Theorem \ref{29}, $b_j \in \mathcal{H} \mathcal{S}_G^{m-j}(U \times \R^{d+1})$. Finally we set:

\begin{equation} \label{61}
b(x,\xi,t):=\sum_{j=0}^{+ \infty} t^j b_j(x,\xi,t).
\end{equation}

The key point in what follows is to show $b \in \mathcal{H} \mathcal{S}_G^m(U \times \R^{d+1})$: this is exactly Theorem \ref{68}.
Note that the series $b$ defined at equation \eqref{61} is locally finite and therefore is $C^\infty$ on $U \times \R^{d+1}$.

Recall from the remark after Definition \ref{10} that it suffices to consider $s \in [1,2]$.
We have, for all $ s ~ \in [1,2]$, for all $ (\xi,t) \in \R^{d+1}$, for all $ x \in U$:

\begin{align} 
 b(x,\delta_s \xi,st)- s^m b(x,\xi,t) & =\sum_{j=0}^{+ \infty} (st)^j b_j(x, \delta_s \xi,st) -s^m \sum_{j=0}^{+ \infty} t^jb_j(x,\xi,t) \\
&= \sum_{j=0}^{+ \infty} (st)^j f_{j,s}(x,\xi,t),  \label{64}
\end{align}
where we set:

\begin{equation} \label{65}
f_{j,s}(x,\xi,t):=b_j(x,\delta_s \xi,st) - s^{m-j} b_j(x,\xi,t).
\end{equation}
From equation \eqref{65}, the reader can observe that the sequence of functions $(f_{j,s})$ is smooth with compact support in:
\begin{equation} \label{90}
 U \times \overline{B(0,\epsilon_j^{-1}}) \times [-1,1], ~  \text{for fixed} ~ s \in [1,2],	
\end{equation}
where $\overline{B(0,\epsilon_j^{-1}})$ is the closed ball for the homogeneous quasi-norm, since $a_j'$ is homogeneous of order $m-j$ on the nose outside this set.
 
Unfortunately, as one can see in equation \eqref{90},  as $j$ gets bigger, the sequence $(\epsilon_j^{-1})$ goes to $+ \infty$, meaning that the support of the functions $f_{j,s}$ will spread to infinity. Nonetheless, if we redefine the sequence $(\epsilon_j)$ -- reducing each term so that Proposition \ref{57} is still true -- we will get rapid convergence in the series \eqref{64}.
Therefore, the vital cog of this part is to redefine a sufficiently rapidly converging sequence $(\epsilon_j)$. This is what we do in equations \eqref{76} and \eqref{77} below, to obtain the Theorem \ref{68}.

The following simple lemma will be useful:
\begin{lemma}\label{67}
Given $F \in C^{\infty}(U \times \R^{d+1})$. We suppose the following conditions to be true :
\begin{enumerate}
\item{$supp(F) \subset U \times \R^d \times [-1,1]$.}
\item{For every  $(\beta,\gamma)$ multi-indices, $\alpha \in \N$ for every compact K included in $U$, $\exists ~  C_{\beta,\alpha,\gamma,K} >0 $ so that $\forall ~ x,\xi \in K \times \R^d$, $\forall ~ t \in \R$ the following holds: 
$$|\partial_x^\gamma \partial_{\xi}^{\beta} \partial_t^{\alpha} F(x,\xi,t)| \leq C_{\beta,\alpha,\gamma,K}  (1+|\xi|)^{m-|\beta|- \alpha},$$}
\end{enumerate}
then $F \in S_G^m(U \times \R^{d+1})$.
\end{lemma}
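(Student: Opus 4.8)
The goal is to verify the symbol estimate \eqref{151} for the function $a(x,\xi):=F(x,\xi,1)$ on $U\times\R^d$ — wait, no: the statement asserts $F\in S_G^m(U\times\R^{d+1})$ itself, so what must be checked is the estimate \eqref{151} for $F$ on $U\times\R^{d+1}$, i.e. a bound of the form $|D_x^\gamma D_{(\xi,t)}^{(\beta,\alpha)}F(x,\xi,t)|\leq C\,(1+|(\xi,t)|)^{m-|\beta|-\alpha}$, where $|(\xi,t)|$ is the homogeneous quasi-norm on $\R^{d+1}$ (with the last weight equal to $1$, so that $|(\xi,t)|\asymp|\xi|+|t|$). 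The two hypotheses give exactly the $\xi$-decay $(1+|\xi|)^{m-|\beta|-\alpha}$ together with the support restriction $|t|\leq 1$. First I would record the elementary comparison: on the support of $F$ one has $|t|\leq 1$, hence $1+|(\xi,t)|\leq 1+|\xi|+|t|\leq 2(1+|\xi|)$, and conversely $1+|\xi|\leq 1+|(\xi,t)|$ up to the quasi-norm triangle-inequality constant. Consequently $(1+|\xi|)^{m-|\beta|-\alpha}$ and $(1+|(\xi,t)|)^{m-|\beta|-\alpha}$ are comparable \emph{on $\supp F$}, with a constant depending only on $m,|\beta|,\alpha$ and the quasi-norm.

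The one subtlety is that \eqref{151} for $F$ requires controlling \emph{all} partial derivatives $D_x^\gamma D_\xi^{\beta'} D_t^{\alpha'}F$, and condition (2) as stated only provides the bound for derivatives with the correct "matching" loss in $\xi$; but this is precisely the same list of derivatives, since a multi-index in the $(\xi,t)$-variables on $\R^{d+1}$ is a pair $(\beta',\alpha')$ with $\beta'$ a $d$-multi-index and $\alpha'\in\N$, and the homogeneous order of $(\beta',\alpha')$ for $\tilde\delta_s$ is $|\beta'|+\alpha'$ because the $t$-weight is $1$. So hypothesis (2) is in bijection with the required estimates. I would state this matching explicitly to avoid any confusion between the $\R^d$ and $\R^{d+1}$ conventions.

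Putting these together: fix a compact $K\subset U$ and multi-indices $\gamma$, $(\beta,\alpha)$. If $(\xi,t)\notin\supp F$ then $D_x^\gamma D_{(\xi,t)}^{(\beta,\alpha)}F(x,\xi,t)=0$ and there is nothing to prove (here one uses that derivatives of $F$ also vanish off $\supp F$, which holds since $F$ is smooth and $\supp F\subset U\times\R^d\times[-1,1]$ is closed in the $t$-direction — more carefully, all $t$-derivatives vanish where $F\equiv 0$ on a neighbourhood, and on the boundary $|t|=1$ one can instead just use continuity, but the cleanest route is: $\supp(D^\bullet F)\subset\supp F$). For $(\xi,t)\in\supp F$, apply hypothesis (2) to get $|D_x^\gamma D_{(\xi,t)}^{(\beta,\alpha)}F|\leq C_{\beta,\alpha,\gamma,K}(1+|\xi|)^{m-|\beta|-\alpha}\leq C'_{\beta,\alpha,\gamma,K}(1+|(\xi,t)|)^{m-|\beta|-\alpha}$ by the comparison of the first paragraph (in both the case $m-|\beta|-\alpha\geq 0$ and $m-|\beta|-\alpha<0$, absorbing the constant appropriately). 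This is exactly \eqref{151} for $F$ with the quasi-norm on $\R^{d+1}$, so $F\in S_G^m(U\times\R^{d+1})$.

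\textbf{Main obstacle.} There is no serious obstacle; the only thing to be careful about is the bookkeeping between the two homogeneity conventions ($\R^d$ versus $\R^{d+1}$, with the extra coordinate having weight $1$), and the observation that the support constraint $|t|\leq 1$ is what upgrades "$\xi$-decay" to "$(\xi,t)$-decay". I would therefore spend the bulk of the writeup on the quasi-norm comparison on $\supp F$ and on making the multi-index correspondence precise, and dispose of the vanishing-off-support case in one line.
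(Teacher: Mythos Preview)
Your proposal is correct and is precisely the argument the authors intend; the paper itself omits the proof with the remark ``The proof is not hard and is left to the reader.'' Your comparison $1+|(\xi,t)|\asymp 1+|\xi|$ on $\supp F$ (using $|t|\le 1$ and the quasi-norm $|(\xi,t)|=|\xi|+|t|$ the paper fixes in Lemma \ref{16}) together with the multi-index bookkeeping $|(\beta,\alpha)|=|\beta|+\alpha$ is exactly the expected route.
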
 
The proof is not hard and is left to the reader.



\begin{theoreme}\label{68}
We have  $b \in \mathcal{H} \mathcal{S}_G^m(U \times \R^{d+1})$.
\end{theoreme}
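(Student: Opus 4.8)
The plan is to show that $b$ defined in \eqref{61} lies in $\mathcal{H} \mathcal{S}_G^m(U \times \R^{d+1})$ by controlling the remainder in \eqref{64}. Recall that $b$ is locally finite in $(\xi,t)$ and hence smooth; the issue is purely the estimate \eqref{150} for $s \in [1,2]$, i.e.\ that $(x,\xi,t) \mapsto b(x,\delta_s\xi,st) - s^m b(x,\xi,t) = \sum_j (st)^j f_{j,s}$ is Schwartz in $(\xi,t)$, uniformly on compacts in $x$ and $s$. The point is that each $f_{j,s}$, by \eqref{90}, is supported in $U \times \overline{B(0,\epsilon_j^{-1})} \times [-1,1]$, so it vanishes for $|\xi| > \epsilon_j^{-1}$. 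Since $f_{j,s}$ is a difference of two $\mathcal{H}\mathcal{S}_G^{m-j}$-functions evaluated at dilated arguments, it is a symbol of order $m-j$; combined with the compact $t$-support this fits the hypotheses of Lemma \ref{67}, so $f_{j,s} \in S_G^{m-j}(U \times \R^{d+1})$ with seminorm bounds that are uniform in $s \in [1,2]$.

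First I would fix a compact $K \subset U$, multi-indices $\alpha,\beta,\gamma$, and an integer $k \geq 0$, and estimate $\partial_x^\gamma \partial_\xi^\beta \partial_t^\alpha \big[(st)^j f_{j,s}(x,\xi,t)\big]$ for $(x,\xi,t) \in K \times \R^{d+1}$. Using the Leibniz rule in $t$ (the factor $(st)^j$ only contributes when $j \geq$ the number of $t$-derivatives landing on it, and is bounded by $(2)^j$ on $|t|\le 1$) together with the $S_G^{m-j}$-bound for $f_{j,s}$ from Lemma \ref{67}, one gets a bound of the shape $C_{j,\ldots}\,(1+|\xi|)^{m-j-|\beta|-\alpha'}$ for the $j$-th term, for some $\alpha' \le \alpha$; but crucially this term \emph{vanishes unless} $|\xi| \le \epsilon_j^{-1}$, so on its support $(1+|\xi|)^{-k}$ is bounded below by $(1+\epsilon_j^{-1})^{-k}$, and hence $(1+|\xi|)^{m-j-|\beta|-\alpha'} \le (1+\epsilon_j^{-1})^{k}\,(1+\epsilon_j^{-1})^{m-|\beta|}\,(1+|\xi|)^{-k}$ whenever $j$ is large enough relative to $|\beta|+\alpha+k$. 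Thus the $j$-th term is bounded by $C_{j,K,\alpha,\beta,\gamma}\,(1+\epsilon_j^{-1})^{N_j}\,(1+|\xi|)^{-k}$ for an exponent $N_j$ depending on the orders, and the whole remainder series converges in the $S_G^{-k}$ seminorm provided $\sum_j C_{j,K,\alpha,\beta,\gamma}(1+\epsilon_j^{-1})^{N_j} < \infty$.

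The main obstacle --- and the reason \eqref{76} and \eqref{77} are promised in the text --- is that the constants $C_{j,K,\alpha,\beta,\gamma}$ and the support radii $\epsilon_j^{-1}$ both grow with $j$, and one must choose the sequence $(\epsilon_j)$ decreasing fast enough (a diagonal/Borel-type argument) that simultaneously (a) Proposition \ref{57} still holds, i.e.\ $\sigma \in S_G^m$, and (b) for every fixed triple of orders the tail series above converges, with uniformity over $s \in [1,2]$ and over compacts in $x$. Concretely I would, at stage $j$, have already determined finitely many seminorm constants (those of $b_0,\dots,b_{j-1}$ and the finitely many relevant derivatives), and then shrink $\epsilon_j$ so that $\epsilon_j \le \epsilon_{j-1}$, that Proposition \ref{57}'s convergence condition is met, and that $C_{j,K,\alpha,\beta,\gamma}(1+\epsilon_j^{-1})^{N_j} \le 2^{-j}$ for all $(K,\alpha,\beta,\gamma)$ among the first $j$ in some fixed enumeration. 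Then the remainder series converges absolutely in every seminorm of $S_G(U \times \R^{d+1}) = S_G^{-\infty}(U \times \R^{d+1})$, i.e.\ it is Schwartz in $(\xi,t)$, which is precisely \eqref{150}; since this holds for all $s \in [1,2]$, the remark after Definition \ref{10} gives $b \in \mathcal{H}\mathcal{S}_G^m(U \times \R^{d+1})$, as claimed.
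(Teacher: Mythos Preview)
Your overall strategy---fix $s\in[1,2]$, estimate each term of $\sum_j (st)^j f_{j,s}$, and close by a Borel-type choice of $(\epsilon_j)$---is the same as the paper's. But the key step of your diagonal argument goes in the wrong direction and cannot close as written.

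You use the \emph{upper} bound on $|\xi|$ coming from $\operatorname{supp} f_{j,s}\subset\{|\xi|\le\epsilon_j^{-1}\}$ to produce the factor $(1+\epsilon_j^{-1})^{N_j}$, and then propose to ``shrink $\epsilon_j$ so that $C_{j,K,\alpha,\beta,\gamma}(1+\epsilon_j^{-1})^{N_j}\le 2^{-j}$''. This is impossible: shrinking $\epsilon_j$ makes $(1+\epsilon_j^{-1})^{N_j}$ \emph{grow}, while the symbol constant $C_{j,K,\alpha,\beta,\gamma}$ for $a_j'=\phi(\delta_{\epsilon_j}\cdot)a_j$ is essentially independent of $\epsilon_j$ (derivatives hitting the cutoff contribute powers of $\epsilon_j$ that are bounded, not small, after rescaling). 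So the product you want to force below $2^{-j}$ is nondecreasing as $\epsilon_j\to 0$. In fact the factor $(1+\epsilon_j^{-1})^{N_j}$ in your bound is superfluous anyway: once $j$ is large relative to $m,|\beta|,\alpha,k$ you already have $(1+|\xi|)^{m-j-|\beta|-\alpha'}\le (1+|\xi|)^{-k}$ without any support condition. The genuine problem is making $\sum_j C_j\,2^j\,j^{\alpha}$ converge, and the upper bound on $|\xi|$ gives you no leverage on $C_j$.

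The mechanism that works---and this is what the paper does in \eqref{76}--\eqref{79}---uses the \emph{lower} bound on $|\xi|$ on the support of $\phi(\delta_{\epsilon_j}\,\cdot\,)$, namely $|\xi|\ge \tfrac12\epsilon_j^{-1}$. This yields $(1+|\xi|)^{-1}\le 2s\epsilon_j$, so one power of $(1+|\xi|)^{-1}$ can be traded for a factor of $\epsilon_j$. Now shrinking $\epsilon_j$ (to something like $\epsilon_j\le 2^{-2j}/C_{j,\ldots}$) genuinely absorbs the $a_j$-dependent constant, producing a uniform bound $4\cdot 2^{-2j}(1+|\xi|)^{m-j+1-|\beta|}$, and the resulting series is summable. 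Replace your upper-bound step by this lower-bound trick and your argument goes through.
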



\begin{proof}

To show that $b \in \mathcal{H} \mathcal{S}_G^m(U \times \R^{d+1})$, we benefit from the equalities:

\begin{equation} \label{69}
S_G^{- \infty}(U \times \R^{d+1})=\cap_{ N \in \mathbb{N}} S_G^{m -N}(U \times \R^{d+1})=\mathcal{S}_G(U \times \R^{d+1}),
\end{equation}

and show: 

\begin{equation} \label{155}
\forall ~ N \in \mathbb{N}, \forall ~ s \in [1,2], ~~  (x,\xi,t) \mapsto \sum_{j=0}^{+ \infty} (st)^j f_{j,s}(x,\xi,t) \in S_G^{m -N}(U \times \R^{d+1}).
\end{equation}

As usual, we fix $s \in [1,2]$. Let $N \in \N$ and write:

\begin{align} \label{72}
\sum_{j=0}^{+ \infty} (st)^j f_{j,s}(x,\xi,t)=\sum_{j=0}^{N-1} (st)^j f_{j,s}(x,\xi,t) \underbrace{+ \sum_{j=N}^{+ \infty} (st)^j a_j'(x,\delta_s \xi)  - s^m \sum_{j=N}^{+ \infty}  t^ja_j'(x,\xi).}_{(x,\xi,t) \mapsto \sum_{j=N}^{+ \infty} (st)^j f_{j,s}(x,\xi,t)  }
\end{align}

It is easy to see that the first term on the right of the previous equality is smooth and compactly supported. Therefore, thanks to \eqref{72}, it suffices to show that the two infinite series satisfy the conditions of lemma \ref{67}. We will do this for the first of the two, the other being similar. The functions $a_j$ are in $C^\infty(U \times \R^d \setminus \{ 0 \})$, homogeneous of order $m-j$ on the nose, by hypothesis. So thanks to proposition \ref{92},  $a_j'=((\delta_{\epsilon_j})^* \phi) a_j \in  S_G^{m-j}(U \times \R^d)$, where the upper star denotes the pull-back. 
\bg
We fix an exhaustive sequence of compacts $(K_i)$ covering $U$.
We find that, for any natural number $j$, 
for any compact set $K$ of $U$,  $\forall ~ \beta,\gamma$ multi-indices, there exists a natural number $i$ such that $K \subset K_i$ and $ C_{\beta,\gamma,j,K_i} >0$ (independant of $s$) such that $\forall ~ x,\xi \in K \times \R^d$:

\begin{align} \label{75}
 |\partial_x^{\gamma} \partial_{\xi}^{\beta} \phi(\delta_{\epsilon_j s} \xi) a_j(x,\delta_s \xi)| \leq C_{\beta,\gamma,j,K_i} (1 + |\xi|)^{m - j - |\beta |}.
\end{align}

We will replace the sequence $(\epsilon_j)$ by a sequence $(\epsilon_j')$, with  $0 \leq \epsilon_j' \leq \epsilon_j$, for all $j$. Thus $(\epsilon_j')$ still converges to 0. We also demand that $(\epsilon_j')$ satisfyies:

\begin{align}\label{76}
\epsilon_j' & \leq 2^{-2j} ~ \min \left\{ \left. \frac{1}{C_{\beta,\gamma,j,K_i}} ~ \right| ~~ \gamma,\beta,i ~\text{such that}~ |\beta |+|\gamma|+i \leq j \right\}, \\
\epsilon_j' & \leq \frac{1}{4}. \label{77}
\end{align}

Recall equation \eqref{56} defining $\phi$. It follows that:
\begin{itemize}
\item{If $\xi$ is such that $1 + |\xi| \leq \frac{1}{2} (s\epsilon_j')^{-1}$ then in particular $|\xi| \leq \frac{1}{2} (s\epsilon_j')^{-1} $ so $\phi(\delta_{\epsilon_j' s} \xi) =0$.}
\item{If $\xi$ is such that  $1 + |\xi| \geq \frac{1}{2} (s\epsilon_j')^{-1}$ then $(1+|\xi|)^{-1} \leq 2 s \epsilon_j'$.}
\end{itemize}

The right member of equation \eqref{75} can be rewriten as:
\begin{equation}\label{78}
 |\partial_x^{\gamma}  \partial_{\xi}^{\beta} \phi(\delta_{\epsilon_j' s} \xi) a_j(x,\delta_s \xi)| \leq C_{\beta,\gamma',j,K_i} (1 + |\xi|)^{m - j +1 - |\beta |}(1 + |\xi|)^{-1}.
\end{equation}
According to \eqref{76}, \eqref{77} and the remark following them, the equation \eqref{78} can be rewriten as: 
%
\begin{equation}\label{79} 
| \partial_x^{\gamma}  \partial_{\xi}^{\beta} \phi(\delta_{\epsilon_j' s} \xi) a_j(x,\delta_s \xi)| \leq  4 ~ 2^{-2j} ~(1 + |\xi|)^{m - j +1 - |\beta |}.
\end{equation}
From now fix $i$, $\beta_0,\gamma_0$ multi-indices and $\alpha_0 \in \N$.  At this juncture of the proof, we take $j_0$ big enough, depending on $N$,  $\beta_0,\gamma_0$, $\alpha_0$ and $i$ so that:
\begin{align}
j_0 & \geq  |\beta_0| + |\gamma_0| + i, \\
 j_0  & \geq  N+1+ \alpha_0.  \label{81}
\end{align}
Because the following series are locally finite, we can write:
\begin{align}
| \partial_x^{\gamma_0} \partial_t^{\alpha_0} \partial_{\xi}^{\beta_0} & \sum_{j=N}^{+ \infty}  (st)^j \phi(\delta_{\epsilon_j' s} \xi) a_j(x,\delta_s \xi)| \nonumber \\
& = | \partial_x^{\gamma_0} \partial_t^{\alpha_0} \partial_{\xi}^{\beta_0} \sum_{j=N}^{j_0 -1} (st)^j \phi(\delta_{\epsilon_j' s} \xi) a_j(x,\delta_s \xi) + \sum_{j=j_0}^{+ \infty} \partial_x^{\gamma_0} \partial_t^{\alpha_0} \partial_{\xi}^{\beta_0} (st)^j \phi(\delta_{\epsilon_j' s} \xi) a_j(x,\delta_s \xi)| \label{82} 
\end{align}



Again, the finite sum can be ignored.

In the infinite sums we have $j \geq j_0 \geq |\beta_0| + |\gamma_0| + i $, and in particular  $j \geq |\beta_0|$. So according to equation \eqref{79} we have $\forall ~(x,\xi,t) \in K \times \R^d \times [-1,1]$:

\begin{align}
|\sum_{j=j_0}^{+ \infty} \partial_x^{\gamma_0} \partial_t^{\alpha_0} \partial_{\xi}^{\beta_0} (st)^j \phi(\delta_{\epsilon_j' s} \xi) a_j(x,\delta_s \xi)|
& \underbrace{\leq}_{s \in [1,2]}   \sum_{j=j_0}^{+ \infty } 2^j \frac{j!}{(j- \alpha_0)!} | \partial_x^{\gamma_0} \partial_{\xi}^{\beta_0} \phi(\delta_{\epsilon_j' s} \xi) a_j(x,\delta_s \xi)| \nonumber \\ 
& \underbrace{\leq}_{\eqref{79} } 4 ~ \sum_{j=j_0}^{+ \infty } 2^{-j} \frac{j!}{(j- \alpha_0)!} (1+|\xi|)^{m-j+1-|\beta_0|}  \nonumber \\
& \leq 4 ~ (1+|\xi|)^{m-N-|\beta_0|-\alpha_0} ~ \sum_{j=j_0}^{+ \infty } 2^{-j} \frac{j!}{(j- \alpha_0)!}  \underbrace{(1+|\xi|)^{N -j +1 +\alpha_0}}_{ \leq  1 }, \label{85}
\end{align}

where the ineqality in the last underbrace is true because we have at the same time $j \geq j_0$ and $j_0 \geq N+1+\alpha_0$, by equation \eqref{81}.
D'Alembert's rule shows that the series in \eqref{85} is a convergent series. Therefore with equation \eqref{85} it follows that:

\begin{equation}\label{87}
| \partial_x^{\gamma_0} \partial_t^{\alpha_0} \partial_{\xi}^{\beta_0} \sum_{j=j_0}^{+ \infty}  (st)^j \underbrace{\phi(\delta_{\epsilon_j' s} \xi) a_j(x,\delta_s \xi)}_{:=a_j'(x,\delta_s \xi)}|  \leq C_{N,\beta_0,\alpha_0,\gamma_0,K} (1+|\xi|)^{m - N - |\beta_0| - \alpha_0},
\end{equation}

where we set $C_{N,\beta_0,\alpha_0,\gamma_0,K}:= 4 \sum_{j=j_0}^{+ \infty}  2^{-j} \frac{j!}{(j- \alpha_0)!}  > 0$.

This shows that the first infinite series in equation \eqref{72}  satisfies lemma \ref{67}. The second series  is similar.
With this information in mind, equation \eqref{72} and the remark following it shows that for all $N$, the series  $(x,\x,t) \mapsto \sum_{j=j_0}^{+ \infty} (st)^j f_{j,s}(x,\xi,t) \in S_{G}^{m-N}(U \times \R^{d+1})$. The result follows thanks to the discussion at the beginning of the proof.
\end{proof}


We can now conclude the proof of Theorem \ref{2}:

\begin{proof}[Proof $1) \Rightarrow 2)$ of Theorem \ref{2}.]
Let $a \in S_{phg,G}^m(U \times \R^d)$.
 Theorem \ref{68} tells us that $b \in  \mathcal{H}\mathcal{S}_G^{m}(U \times \R^{d+1})$. Moreover we have $b(x,\xi,1)=\sum_{j=0}^{+ \infty} a_j'(x,\xi)= \sum_{j=0}^{+ \infty} \phi(\delta_{\epsilon_j} \xi) a_j(x,\xi)$.
Hence, $a$ and $b$ have the same asymptotic expansion and so differ by a function in  $S_G^{- \infty}(U \times \R^d)=\mathcal{S}_G(U \times \R^d)$. Thus  $l:=a-b(.,.,1) \in \mathcal{S}_G(U \times \R^d)$. 
Now put $u(x,\xi,t)=b(x,\xi,t)+l(x,\xi) \tilde{\phi}(t)$, where $\tilde{\phi} \in C_c^\infty(\R)$ with $\tilde{\phi}=1$ on $[-1,1]$. Then $u(x,\xi,1)=a(x,\xi)$. Since $(x,\xi,t) \mapsto l(x,\xi)\tilde{\phi}(t)$ is Schwartz class, it follows that $u \in \mathcal{H} \mathcal{S}^m(U \times \R^{d+1})$. This completes the proof.

\end{proof}

\section{Application to Heisenberg calculus on contact manifolds.}
\label{sec:section5}

\subsection{Heisenberg manifold of type $\Hn \times \R^m$ and its exponential charts.}

In this section we investigate the Heisenberg calculus. Our goal is to compare the calculus on a Heisenberg manifold from Beals and Greiner \cite{Beals2016Heisenbergcalculus} and the groupoidal pseudodifferential calculus  of Van Erp and the second author \cite{Yuncken2019groupoidapproach}, developed in the case of filtered manifolds. We will show, see Theorems \ref{157} and \ref{foliations}  that the two theories coincide in the case of contact manifolds and foliations. From now on, we refer to these authors and their works, respectively as BG and vEY. The reader can find a nice exposition of Heisenberg manifolds in \cite{ponge2008heisenberg}, and in particular section 2.1 which illustrates the fact that these manifolds are very common.
The next definition is taken from BG \cite{Beals2016Heisenbergcalculus} Definition (10.1) p90.
\begin{definition} \label{97}
A Heisenberg manifold $M$ is a d+1 dimensional $C^\infty$ manifold equipped with a hyperplane bundle $\V$, that is, a subbundle $\V \subset TM$ such that for each $y \in M$, the fiber $\V_y$ has codimension one in the tangent space $T_yM$.
\end{definition} 

\textbf{Remark:} In the language of vEY, following the preprint of Melin \cite{melin1982lie} from 1982 Definition 2.1,  a Heisenberg manifold is a two-step  filtered manifold with the following filtration:

\begin{equation}
H^0=\{0\} \leq H^1=\V \leq H^2=TM.
\end{equation}

In the first few sections, we will deal only with the model situation.
We introduce the following vector fields on $\R^{d+1}$:

\begin{equation} \label{101}
\begin{cases}
    X_0=\frac{\partial}{\partial x_0}  \\
     X_j=\frac{\partial}{\partial x_j} + \frac{1}{2} \sum_{k=1}^d b_{jk} x_k \frac{\partial}{\partial x_0}, &   \forall ~ 1 \leq j \leq d  ,
\end{cases}
\end{equation}

where $B=(b_{jk})$ is an anti-symmetric matrix of order $d$.
Moreover we let $\mathcal{V}$ be the hyperplane bundle on  $M=\R^{d+1}$ generated  by $X_1,...,X_d$.
The vector fields $(X_j)$ are left-invariant for the following group law on $\R^{d+1}$:

\begin{equation} \label{99}
\begin{cases}
    (x.x')_0=x_0+x_0'+\frac{1}{2} \sum_{k,j=1}^d b_{jk} x_k x_k'  \\
     (x.x')_j=x_j+x_j', & ~ \forall ~ j \in \{1,...,d \},
\end{cases}
\end{equation}

where $x_j$ denotes the $j$ th coordinate of $x \in \R^{d+1}$.

\begin{exemple}
The case $d=2n$ and 
\begin{equation}
B= \begin{pmatrix}
0_n & -I_n \\
I_n & 0_n \\
\end{pmatrix},
\end{equation}
corresponds to the Heisenberg group. 
The case where $ b_{jk}=0, \forall ~ (j,k)$ yields the abelian group $\R^{d+1}$.
\end{exemple}







From now on, $M$ stands for $\R^{d+1}$ equipped with the above group law \eqref{99}. Then $M \cong \Hn \times \R^m$, for some $n,m$ with $2n+m=d$. The manifold $M$ is a filtered manifold in the sense described in \cite{melin1982lie}, \cite{choi2019tangent} or \cite{Yuncken2017tangentgroupoidfilteredmanifold}, \cite{Yuncken2019groupoidapproach}. In this particular case, we can equip this manifold with dilations and exponential charts as follows.

\begin{definition} \label{103}
We define the following family of dilations, called the Heisenberg dilations. For $s \in \R_+^*$:
\begin{equation} \label{147}
\delta_s : (v_0,v_1,...,v_d) \in \R^{d+1} \mapsto (s^2 v_0,sv_1,...,sv_d) \in \R^{d+1}.
\end{equation}

\end{definition}
\textbf{Remark:} This is a dilation as in \eqref{3}. One can also encounter this dilation in the original paper \cite{folland1974estimatesHeisenberg} of Folland and Stein, see section 6 p439. 
Dilations are associated to a notion of order for vector fields. In the case of \eqref{147}, the vector field $X_0$ has order 2 and the vector fields $(X_j)_{j \neq 0}$ have order 1.
\begin{exemple}
In the Heisenberg group $\mathbb{H}_1$, with vector fields $(X_0,X_1,X_2)$ such that $[X_1,X_2]=X_0$, the differential operator $P=X_1^2+X_2^2+X_0$ is homogeneous of order 2 in the sense of dilatation \ref{103}. Note that this would not be the case if $\mathbb{H}_1$ had been trivialy filtered.
\end{exemple}
\textbf{Some notation :} Given a family $\overline{X}=(X_1,...X_d)$  of vector fields on a smooth manifold $M$ and a vector $v \in \R^{d+1}$, we write $v.\overline{X}=\sum_{k=1}^d v_k X_k$. We denote $\exp(X).x$ to be the flow of the point $x \in M$ at time 1 along the vector field $X$.

At each point $y$ in a Heisenberg manifold, one can define an osculating group $\mathcal{T}_HM_y$, see \cite{Beals2016Heisenbergcalculus}. In the model case we are currently considering, we have a canonical identification $\mathcal{T}_HM_y \cong \Hn \times \R^n$. The Heisenberg tangent groupoid, generalising Alain Connes' famous tangent groupoid $\TM$, is denoted $\THM$. Recall briefly that in this case, see \cite{Yuncken2017tangentgroupoidfilteredmanifold}:

\begin{equation} \label{104}
\THM=\Big( \underbrace{M \times M}_{\text{pair groupoid}}  \times ~~ \R^* \Big) \bigcup  \Big( \hspace{-5mm} \underbrace{\mathcal{T}_HM}_{\text{osculating  groupoid}} \hspace{-5mm} \times ~~ \{ 0 \} \Big).
\end{equation}
We remind the reader that $\mathcal{T}_HM$ is a bundle of nilpotent Lie groups whose fibers are precisely the $\mathcal{T}_HM_y$. The groupoid $\THM$ is equipped with the following structure.
\begin{definition} \label{105}
With the fields defined at equation \eqref{101}, we define the following chart on the Heisenberg manifold $M$, called an expoential chart:

\begin{equation} \label{106}
\Exp : M \times \R^{d+1} \times \R \rightarrow \THM,  (y,v,t) \mapsto 
\begin{cases}
(y,\exp(\delta_t(-v).\overline{X}).y,t) & \mbox{if $t \neq 0$} \\
(y,v.\overline{X}|_y,0) & \mbox{if $t=0$.}
\end{cases}
\end{equation}

\end{definition}
These charts, for different choices of $\overline{X}$ a local frame compatible with the filtration, do indeed define a smooth structure on $\THM$.
 In the case of the vector fields $(X_0,...X_d)$ from equation \eqref{101}  we can compute the chart to obtain an explicit formula:

\begin{proposition} \label{140}
The charts $\Exp$ are given at $t \neq 0$ by:
\begin{equation} \label{107}
\Exp(y,v,t)=\Big(y,y',t \Big),
\end{equation}
where $y'= \Big(  - \frac{1}{2} \sum_{j=1}^d \sum_{k \neq j} v_j b_{jk}y_k t -v_0t^2 + y_0 , -v_1t +y_1, \ldots , -v_dt +y_d  \Big)$. In particular, 

\begin{equation} \label{143}
\Exp(y,v,1)=(y,\phi_y(v)+y,1),
\end{equation}

where we define the bijective linear  map $\phi_y(v)=\Big(-v_0 - \frac{1}{2} \sum_{j=1}^d \sum_{k \neq j} v_j b_{jk} y_k, -v_1, \ldots , -v_d \Big)$.
\end{proposition}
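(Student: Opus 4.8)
The plan is to compute the flow in \eqref{106} directly, which is feasible because the vector fields \eqref{101} are affine and hence have globally defined flows. Fix $(y,v,t)$ with $t\neq 0$ and abbreviate $w:=\delta_t(-v)$, so that $w_0=-t^2v_0$ and $w_j=-tv_j$ for $1\le j\le d$ by \eqref{147}. Using \eqref{101}, the vector field to be flowed is
\[
  Z:=\delta_t(-v).\overline{X}=w_0X_0+\sum_{j=1}^{d}w_jX_j
   =\Big(w_0+\frac{1}{2}\sum_{j,k=1}^{d}w_jb_{jk}x_k\Big)\frac{\partial}{\partial x_0}+\sum_{j=1}^{d}w_j\frac{\partial}{\partial x_j}.
\]
Let $\gamma$ be the integral curve of $Z$ with $\gamma(0)=y$; then $\exp(\delta_t(-v).\overline{X}).y=\gamma(1)$, and it remains to solve the associated ODE.

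First I would solve the equations for the coordinates $x_1,\dots,x_d$, namely $\dot\gamma_j(\tau)=w_j$, which give $\gamma_j(\tau)=y_j+\tau w_j$ for $1\le j\le d$. Substituting these into the equation for the $0$-th coordinate yields
\[
  \dot\gamma_0(\tau)=w_0+\frac{1}{2}\sum_{j,k=1}^{d}w_jb_{jk}\gamma_k(\tau)
   =w_0+\frac{1}{2}\sum_{j,k=1}^{d}w_jb_{jk}y_k+\frac{\tau}{2}\sum_{j,k=1}^{d}w_jb_{jk}w_k .
\]
The crucial point is that the $\tau$-term disappears: $\sum_{j,k}w_jb_{jk}w_k=0$ because $B$ is antisymmetric. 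Hence $\dot\gamma_0$ is constant in $\tau$, so $\gamma_0(\tau)=y_0+\tau\big(w_0+\frac{1}{2}\sum_{j,k=1}^{d}w_jb_{jk}y_k\big)$. Evaluating $\gamma$ at $\tau=1$ and substituting back $w_0=-t^2v_0$, $w_j=-tv_j$, together with $b_{jj}=0$ to restrict the double sum to $k\neq j$, gives exactly the point $y'$ of \eqref{107}.

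Specialising to $t=1$ immediately produces \eqref{143}, with $\phi_y(v)=\big(-v_0-\frac{1}{2}\sum_{j=1}^{d}\sum_{k\neq j}v_jb_{jk}y_k,\,-v_1,\dots,-v_d\big)$; this map is visibly linear in $v$, and it is a bijection because its matrix is triangular with every diagonal entry equal to $-1$. There is no real difficulty in the argument; the only step requiring care is the vanishing of the quadratic term $\sum_{j,k}w_jb_{jk}w_k$, which is precisely the antisymmetry of $B$ and is what makes the exponential chart affine (rather than merely polynomial) in this model situation.
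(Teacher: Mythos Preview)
Your argument is correct and follows essentially the same route as the paper's proof: both compute the time-$1$ flow of the vector field by solving the elementary ODE for coordinates $1,\ldots,d$, substituting into the $0$-th equation, and using the antisymmetry of $B$ to kill the quadratic term. The only cosmetic difference is that you substitute $w=\delta_t(-v)$ at the outset whereas the paper computes the flow of $v.\overline{X}$ first and substitutes afterwards; you also supply the (easy) justification of the bijectivity of $\phi_y$, which the paper merely asserts.
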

\begin{proof}
One can check that the flow of $y=(y_j)$ along the vector field $v.\overline{X}$ is given by:

\begin{align}
\gamma(s) &=\Big( y_0+v_0s+\frac{1}{2} \sum_{j,k=1}^d v_j b_{jk} (y_k s+v_k \frac{s^2}{2}) ),y_1+v_1s,...,y_d+v_d s \Big) \\
& = \Big( y_0+v_0s+\frac{1}{2} \sum_{j=1}^d \sum_{k \neq j} v_j b_{jk} y_k s, y_1+v_1s,...,y_d+v_d s \Big),
\end{align}

where in the second equality we have used the antisymmetry of $(b_{jk})$. Putting $s=1$ and replacing $v$ by $\delta_t(-v)=(-t^2v_0,-tv_1,...,-tv_d)$, we obtain the exponential map $\Exp$ from \eqref{106}. The result follows. 
\end{proof}

\subsection{The Heisenberg pseudo-differential calculus according to BG.}
In this subsection we recall the definition of the calculus of BG and show how the Fourier transform connects the symbols of their calculus with the exponential charts $\Exp$ above. 
The following properties can be found on page 26 of BG \cite{Beals2016Heisenbergcalculus} and we follow their notation.
The operators defined by the vector fields of equation \eqref{101} have symbols:

\begin{equation} \label{110}
\left\{
    \begin{array}{ll}
      \sigma_0(x,\eta)=\eta_0  \\
    \sigma_j(x,\eta)=\eta_j + \frac{1}{2} \sum_{k=1}^d b_{jk} x_k \eta_0,  ~~ \forall ~ 1 \leq j \leq d.  
    \end{array}
\right.
\end{equation}

Let us write $\sigma(x,\xi)=(\sigma_0(x,\xi),...,\sigma_d(x,\xi))$ and $\overline{\sigma}(x,\xi)=(x,\sigma(x,\xi))$.
We can define the inverse $\widetilde{\sigma}$ of $\sigma$, in the sense that $\sigma(x,\widetilde{\sigma}(x,\eta))=\eta$, by: 

\begin{equation} \label{111}
\left\{
    \begin{array}{ll}
      \widetilde{\sigma_0}(x,\eta)=\eta_0  \\
   \widetilde{\sigma_j}(x,\eta)=\eta_j - \frac{1}{2} \sum_{k=1}^d b_{jk} x_k \eta_0,  ~~ \forall ~ 1 \leq j \leq d.  
    \end{array}
\right.
\end{equation} 
Thus $\overline{\sigma}^{-1}(y,v)=(y,\widetilde{\sigma}(y,v))$.
The following definition of $\V$-pseudodifferential operator can be found page 91 of BG \cite{Beals2016Heisenbergcalculus}. The equation \eqref{113} that defines the pseudodifferential operators is extracted from Definition (9.1) p80 of \cite{Beals2016Heisenbergcalculus} and the expression of the kernel is given at equation (9.12) p81 of \cite{Beals2016Heisenbergcalculus}.
\begin{definition} \label{112}
Let $U \subset \R^{d+1}$ be an open set.
We define $S_{\V}^m(U \times \R^{d+1})$ to be the $\V$-symbol class. It consists of functions $q \in C^\infty(U \times \R^{d+1})$, which can be put in the following form 
\begin{equation}
q(x,\xi)=f(x,\sigma(x,\xi))=\overline{\sigma}^* f(x,\xi),
\end{equation}
for some $f \in S_{phg,G}^m(U \times \R^{d+1})$, see Definition \ref{12}, where we use the Heisenberg dilations of Definition \ref{103}.
Given $q \in S_{\V}^m(U \times \R^{d+1})$ we define, for $\phi \in C_c^\infty(\R^{d+1})$, the operator 

\begin{equation} \label{113}
Op(q)\phi(x)=\int_{\R^{d+1}} e^{i x .\xi} q(x,\xi) \hat{\phi}(\xi) \frac{d \xi}{(2 \pi)^{d+1}},
\end{equation}
and call it a $\V$-pseudodifferential operator with symbol $q$. Its Schwartz kernel $k \in D'(U \times U)$ is  the distribution 

\begin{equation} \label{114}
k(x,y)=\mathcal{F}_2^{-1}(q)(x,x-y) \in D'(U \times U),
\end{equation}
where $\mathcal{F}_2$ denotes the Fourier transform in the second variable, that is $\mathcal{F}_2^{-1}(q)(x,\xi)=\mathcal{F}^{-1}(v \mapsto q(x,v))(\xi)$.
We denote by $\bold{\Psi}^m_{\mathcal{V}}(U)$ the set of  $\V$-pseudodifferential operators.
\end{definition}
The first result we obtain is a simple link between $f$ and the Fourier transform of the Schwartz kernel of $Op(q)$ pushed forward by the exponential map. This result gives the conceptual explanation for the agreement between the calculus of BG and the groupoidal calculus.
Before that, let us note that the linear part $\phi_y$ from Proposition \ref{140} is related to $\widetilde{\sigma}$ by the following formula which follows directly from equations \eqref{143} and \eqref{111}.
\begin{proposition} \label{116}
The bijective linear  map $\phi_y$ satisfies:
$$^t \phi_y^{-1} (\eta) = \widetilde{\sigma}(y,-\eta).$$
\end{proposition}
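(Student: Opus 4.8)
The plan is to unwind the two definitions of the linear maps and check that they agree componentwise. Recall from Proposition \ref{140} that
\[
\phi_y(v)=\Big(-v_0 - \tfrac{1}{2} \sum_{j=1}^d \sum_{k \neq j} v_j b_{jk} y_k,\ -v_1,\ \ldots,\ -v_d \Big),
\]
while from \eqref{111},
\[
\widetilde{\sigma}(y,-\eta)=\Big(-\eta_0,\ -\eta_1 + \tfrac{1}{2}\sum_{k=1}^d b_{1k} y_k \eta_0,\ \ldots,\ -\eta_d + \tfrac{1}{2}\sum_{k=1}^d b_{dk} y_k \eta_0\Big).
\]
So the first step is simply to write down the matrix of $\phi_y$ with respect to the standard basis of $\R^{d+1}$: it is lower-block-triangular, acting as $-1$ on the coordinates $v_1,\dots,v_d$, and sending these into the $v_0$-coordinate via the row vector $c_k := -\tfrac{1}{2}\sum_{j\neq k} b_{jk} y_j$ in position $k$, together with $-1$ in the $(0,0)$ entry.

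Next I would transpose this matrix: the transpose is upper-block-triangular, acts as $-1$ on the $\eta_0$-coordinate, as $-1$ on each $\eta_k$ ($k\geq 1$), and contributes $c_k\,\eta_0$ to the $k$-th output coordinate for $k\geq 1$. Using antisymmetry of $B$, one has $c_k = -\tfrac{1}{2}\sum_{j\neq k} b_{jk} y_j = \tfrac{1}{2}\sum_{j\neq k} b_{kj} y_j = \tfrac{1}{2}\sum_{j=1}^d b_{kj} y_j$ (the $j=k$ term vanishes). Hence the $k$-th component of $\,{}^t\phi_y(\eta)\,$ is $-\eta_k + \tfrac{1}{2}\bigl(\sum_{j=1}^d b_{kj} y_j\bigr)\eta_0$, which matches the $k$-th component of $\widetilde{\sigma}(y,-\eta)$ displayed above; the $0$-th component is $-\eta_0$ on both sides. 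This shows ${}^t\phi_y = \widetilde{\sigma}(y,-\,\cdot\,)$, and since $\widetilde{\sigma}(y,\,\cdot\,)$ is inverse to the linear map $\sigma(y,\,\cdot\,)$ while one checks directly that $\phi_y$ is invertible, we get the claimed identity ${}^t\phi_y^{-1}(\eta) = \widetilde{\sigma}(y,-\eta)$ after inverting both sides.

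Alternatively — and this is probably the cleanest route — I would avoid matrices entirely and argue by duality of bilinear forms: from \eqref{143}, $\Exp(y,v,1) = (y, \phi_y(v)+y,1)$, and composing with \eqref{111} one sees that $\eta \mapsto \sigma(y,\,\cdot\,)$ and $\phi_y$ are, up to signs, adjoint with respect to the standard pairing, because $\sigma(x,\eta)_0 = \eta_0$, $\sigma(x,\eta)_j = \eta_j + \tfrac12\sum_k b_{jk}x_k\eta_0$ has exactly the transpose incidence pattern to $\phi_y$. The only genuine point requiring care is the bookkeeping of signs and the use of the antisymmetry $b_{jk}=-b_{kj}$ to reconcile the index ranges ($k\neq j$ versus $k=1,\dots,d$); there is no real obstacle here, just a short computation, which is why the paper states it as an immediate consequence of \eqref{143} and \eqref{111}.
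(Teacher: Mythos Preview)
Your matrix computation goes off the rails in two places. First, when you extract the coefficient $c_k$ of $v_k$ in the $0$-th component of $\phi_y(v)$, you must set $j=k$ in the double sum $-\tfrac12\sum_{j}\sum_{k'\neq j} v_j b_{jk'}y_{k'}$, which gives $c_k=-\tfrac12\sum_{j\neq k} b_{kj}y_j$, not $-\tfrac12\sum_{j\neq k} b_{jk}y_j$; your indices on $b$ are transposed, and this flips the sign once you invoke antisymmetry. Second, and more seriously, the intermediate identity ${}^t\phi_y=\widetilde{\sigma}(y,-\,\cdot\,)$ that you claim is false in general, and the sentence ``we get the claimed identity after inverting both sides'' is not a valid step: inverting $A=B$ yields $A^{-1}=B^{-1}$, and $\widetilde{\sigma}(y,-\,\cdot\,)$ is not its own inverse (compute $\widetilde{\sigma}(y,-\widetilde{\sigma}(y,-\eta))$ and you find a residual term $-\sum_k b_{jk}y_k\,\eta_0$ in the $j$-th slot). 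The two mistakes happen to conspire so that your displayed formula for the $k$-th component matches the target, but the logic connecting it to ${}^t\phi_y^{-1}$ is broken.

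The paper's route is precisely the direct verification you outline, and the repair is minor: with the corrected row $c_k=-\tfrac12\sum_{j} b_{kj}y_j$ one has
\[
\phi_y=\begin{pmatrix}-1 & c^T\\ 0 & -I_d\end{pmatrix},\qquad
\phi_y^{-1}=\begin{pmatrix}-1 & -c^T\\ 0 & -I_d\end{pmatrix},\qquad
{}^t\phi_y^{-1}=\begin{pmatrix}-1 & 0\\ -c & -I_d\end{pmatrix},
\]
so $({}^t\phi_y^{-1}\eta)_k=-\eta_k-c_k\eta_0=-\eta_k+\tfrac12\sum_j b_{kj}y_j\,\eta_0=\widetilde{\sigma}_k(y,-\eta)$, as required. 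Your alternative duality remark is reasonable as intuition, but as written it inherits the same sign confusion.
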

\begin{theoreme} \m \label{108}
Let $f \in S_{phg,G}^m(M \times \R^{d+1})$ where $M=\R^{d+1}$ as above. 
We denote by $k \in D'(M \times M)$ the Schwartz kernel of the operator $Op(q)$ from BG, where $q$ is defined in Definition \ref{112}.
With the map $\Exp$ from Definition \ref{105}, we set $\widetilde{k}:=(\Exp |_{t=1})_*^{-1} k$, the pushforward of the distribution $k$ by the exponential chart. Then, 
$$\mathcal{F}_2(\widetilde{k})(y,\xi)=f(y,\xi).$$
\end{theoreme}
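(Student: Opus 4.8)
The plan is to unwind all the definitions and reduce the statement to a single change-of-variables computation, keeping careful track of the Jacobians and of where the linear map $\phi_y$ enters. First I would recall that, by Definition \ref{112}, the Schwartz kernel of $Op(q)$ is $k(x,y)=\mathcal{F}_2^{-1}(q)(x,x-y)$ with $q(x,\xi)=f(x,\sigma(x,\xi))$, and that by Proposition \ref{140} the chart $\Exp|_{t=1}$ sends $(y,v)$ to $(y,\phi_y(v)+y)$. So the pushforward $\widetilde{k}=(\Exp|_{t=1})_*^{-1}k$ is obtained by expressing $k(x,x')$ in the coordinates $x=y$, $x'=\phi_y(v)+y$, i.e. $v=\phi_y^{-1}(x'-y)$; as a distribution this means $\widetilde{k}(y,v)$ is $k(y,\phi_y(v)+y)$ multiplied by the Jacobian $|\det \phi_y|$ of the substitution in the second variable.

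Next I would compute $\mathcal{F}_2(\widetilde{k})(y,\xi)$ directly. Writing $\mathcal{F}_2(\widetilde{k})(y,\xi)=\int e^{-iv\cdot\xi}\,|\det\phi_y|\,k(y,\phi_y(v)+y)\,dv$ and substituting $w=\phi_y(v)$ (so $dv=|\det\phi_y|^{-1}dw$ and $v=\phi_y^{-1}(w)$, hence $v\cdot\xi = w\cdot({}^t\phi_y^{-1}\xi)$) turns this into $\int e^{-iw\cdot({}^t\phi_y^{-1}\xi)}\,k(y,y+w)\,dw$. Now $k(y,y+w)=\mathcal{F}_2^{-1}(q)(y,-w)$, so this integral is precisely $\mathcal{F}_2\big(\mathcal{F}_2^{-1}(q)(y,-\,\cdot\,)\big)(y,{}^t\phi_y^{-1}\xi)$, and using the reflection property of the Fourier transform this equals $q(y,-\,{}^t\phi_y^{-1}\xi)$. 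Finally, by Proposition \ref{116} we have ${}^t\phi_y^{-1}(\eta)=\widetilde{\sigma}(y,-\eta)$, so $-\,{}^t\phi_y^{-1}\xi = -\widetilde{\sigma}(y,-\xi)$; plugging into $q(y,\eta)=f(y,\sigma(y,\eta))$ and using that $\widetilde{\sigma}(y,\cdot)$ is the inverse of $\sigma(y,\cdot)$ together with the linearity/oddness of $\widetilde\sigma$ in its second argument (from \eqref{111}), one gets $q(y,-{}^t\phi_y^{-1}\xi)=f(y,\xi)$, as claimed.

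The main obstacle I anticipate is purely bookkeeping rather than conceptual: getting all the signs and the transpose-inverse consistent between the two Fourier-transform conventions $\mathcal{F}_2$ and $\mathcal{F}_2^{-1}$, the reflection $w\mapsto -w$ coming from $k(x,y)=\mathcal{F}_2^{-1}(q)(x,x-y)$, and the relation $v\cdot\xi = w\cdot{}^t\phi_y^{-1}\xi$. A secondary point to treat carefully is that these manipulations are formally for functions but $k$ is only a distribution; I would justify each step (the linear change of variables, the Fourier inversion) at the level of tempered distributions in the second variable, with the first variable $y$ as a smooth parameter, which is legitimate since $q(x,\cdot)\in S^m$ and linear changes of variable and Fourier transform are continuous on $\mathcal{S}'$. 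It is also worth remarking that $\phi_y$ and hence $|\det\phi_y|$ and ${}^t\phi_y^{-1}$ depend smoothly on $y$, so the parametrised statement makes sense and $\mathcal{F}_2(\widetilde k)$ is again a symbol, consistent with the right-hand side $f(y,\xi)$.
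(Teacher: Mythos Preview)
Your proof is correct and follows essentially the same route as the paper: unwind $\Exp|_{t=1}=\tau\circ\Phi$ via Proposition~\ref{140}, apply the Fourier transform, and conclude using Proposition~\ref{116} together with the linearity (oddness) of $\widetilde\sigma(y,\cdot)$. The only difference is presentational: the paper packages the change of variables in the abstract identity $\mathcal{F}\circ\phi_* = {}^t\phi^*\circ\mathcal{F}$ for linear maps and the involution $(\tau\circ m)^{-1}=\tau\circ m$, whereas you write out the integrals and carry the Jacobian $|\det\phi_y|$ explicitly (which then cancels in the substitution $w=\phi_y(v)$; in fact $|\det\phi_y|=1$ here, since $\phi_y$ is triangular with $-1$'s on the diagonal). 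Your remark about justifying the manipulations at the level of tempered distributions in the second variable with $y$ as a smooth parameter is exactly the right way to make the formal integrals rigorous.
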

\textbf{Remark:}
We can sum up this result with the following commutative diagramme:

$$\xymatrix@C+=2cm{k \ar@{|->}[r]^{ (\Exp |_{t=1})_*^{-1} } \ar@{|->}[d]_{\mathcal{F}_2 \circ (\tau \circ m)_* } & \widetilde{k} \ar@{|->}[d]^{\mathcal{F}_2} \\ q \ar@{|->}[r]_{(\overline{\sigma}^{-1})^*} & f }$$

where $\tau : (y,v) \in  \R^{d+1} \times \R^{d+1} \mapsto (y,y+v)$ and $m : (y,v) \in  \R^{d+1} \times \R^{d+1} \mapsto (y,-v)$. 

The consequence of Theorem \ref{108} will be that we can apply Theorem \ref{2} to $f \in S_{phg}^m(U \times \R^{d+1})$. Therefore, there exists $u \in \mathcal{H} \mathcal{S}^m(U \times \R^{d+1} \times \R)$ such that $u|_{t=1}=\mathcal{F}_2(\widetilde{k})$.

\begin{proof}[\text{Proof of theorem \ref{108}}]
Introduce $\Phi : (x,w) \mapsto (x,\phi_x(w))$ where again, $\phi_x$ is the linear map from Proposition \eqref{140}.
Let us remind the reader that if $\phi : \R^d \rightarrow \R^d$ is a linear map and $\mathcal{F}$ denotes the Fourier transform on $\R^d$, then $\mathcal{F} \circ \phi_*= ~ ^t\phi^* \circ \mathcal{F}$, where $\phi_* : \mathcal{E}'(\R^d) \rightarrow \mathcal{E}'(\R^d)$ is the pushforward of distributions and $\phi^* : C^\infty(\R^d) \rightarrow C^\infty(\R^d)$ the pull-back of functions.
Also note that with Definition \ref{112}, $k(x,y)=\mathcal{F}_2^{-1}(q)(x,x-y)$ gives $k(y,y-v)=\mathcal{F}_2^{-1}(q)(y,v)$. Since $(\tau \circ m)^{-1} = \tau \circ m$ we get:

\begin{equation} \label{160}
\mathcal{F}_2 \Big( (\tau \circ m)_* k \Big)(y,\xi)=q(y,\xi).
\end{equation}


One can compute the following:
\begin{align*}
\mathcal{F}_2 \Big( (\Exp |_{t=1})_*^{-1}(k) \Big)(y,\xi) & \hspace{-7mm} \underbrace{=}_{equation ~ \eqref{143}} \hspace{-4mm} \mathcal{F}_2 \Big( (\tau \circ \Phi)_*^{-1}k \Big)(y,\xi) \\ &= \hspace{+2mm} \mathcal{F}_2 \Big( \Phi_*^{-1} \circ \tau_*^{-1} k \Big)(y,\xi) \\
&  =  \mathcal{F}_2 \Big( \tau_*^{-1} k \Big)(y,^t(\phi_y^{-1})(\xi)) \\ & \hspace{-7mm} \underbrace{=}_{proposition ~ \ref{116} } \hspace{-5mm} \mathcal{F}_2 \Big( \tau_*^{-1} k \Big)(y,\widetilde{\sigma}(y,-\xi)) \\
&=\hspace{5mm} \mathcal{F}_2 \Big( ( \tau \circ m)_* k \Big)(y,\widetilde{\sigma}(y,\xi)) \\
& \underbrace{=}_{\eqref{160}} \hspace{5mm} q(y,\widetilde{\sigma}(y,\xi)) \\
& = f(y,\xi),
\end{align*}

where the last equality is true because $\sigma(y,\widetilde{\sigma}(y,\xi))=\xi$, see just after equation \eqref{111}, and $q(y,\xi)=f(y,\sigma(y,\xi))$ by definition.
\end{proof}
\subsection{Equality of the two calculi.}
We recall the Debord-Skandalis action on $\THM$, see \cite{Yuncken2019groupoidapproach} (where it is called the zoom action) and \cite{debord2014adiabatic}:
\begin{definition} \label{121}
We definine the Debord-Skandalis action of $\R_+^*$ on $\THM$, $s \in \R_+^* \mapsto \alpha_s \in Aut(\THM)$ by:
\begin{equation}
\left\{
    \begin{array}{ll}
     \alpha_s(y,x,t)=(y,x,s^{-1}t) ~~ (x,y) \in M  \\
    \alpha_s(x,\xi,0)=(x,\delta_s(\xi),0) ~~ x \in M, ~ \xi \in\mathcal{T}_HM_x.
    \end{array}
\right.
\end{equation}

\end{definition}
\textbf{Remark:} Each $\alpha_s$ is a smooth Lie groupoid automorphism. Moreover, we can restrict this action to $\mathbb{T}_H U$, for any open subset $U \subset M$.

\begin{definition} 
Given a Lie groupoid $G$, we denote by $\Omega_r=\bigsqcup_{p \in G} |\Omega|^1(T_p G^{r(p)})$  the density bundle tangent with respect to the $r$-fibers, where $|\Omega|^1(T_p G^{r(p)})$ means the 1-density on the tangent space at $p$ of the fiber  $G^{r(p)}$. We denote by $C_p^\infty(G,\Omega_r)$ the space of properly supported smooth sections of this bundle. 
\end{definition}



We now recall the definition of an $r$-fibered distribution. For more informations on this topic the reader can read the article of Lescure-Manchon-Vassout \cite{lescure2017convolution}, see in particular theorem 2.1 (Schwartz kernel theorem for submersions)  and proposition 2.7 in \cite{lescure2017convolution}, and the articles of vEY \cite{Yuncken2019groupoidapproach},\cite{Yuncken2017tangentgroupoidfilteredmanifold}.
\begin{definition} \label{145} 
Given $G$ a Lie groupoid, we denote by $\mathcal{E}_r'(G)$ the set of properly supported $r$-fibered distributions on $G$, namely:
$$\mathcal{E}_r'(G):=\{u : C^\infty(G) \rightarrow C^\infty(\Gu),\text{ continuous and } C^\infty(\Gu)\text{-linear} \},$$
where the $C^\infty(\Gu)$-structure on $C^\infty(G)$ is induced by  the pull-back of the range map $r : G \rightarrow \Gu$. This means for all $ f \in C^\infty(G)$ and $ a \in C^\infty(\Gu)$ we have:
\begin{equation}
\langle u,r^*a.f \rangle= a \langle u,f \rangle.
\end{equation}
\end{definition}
Recall that a pseudo-differential operator $P$ on a manifold $M$ is said to be properly supported when its Schwartz kernel $k \in D'(M \times M)$ is a properly supported distribution in the sense of the pair groupoid $M \times M$ that is $r,s : \supp(k) \in M \times M \rightarrow M$ are proper maps. The Debord-Skandalis action is a key element in the following theorem concerning the classical pseudodifferential calculus, see \cite{Yuncken2019groupoidapproach}.
\begin{theoreme}{  vEY (2017).} \label{141}
Any properly supported classical pseudo-differential operator $P$ on a  trivially filtered  smooth manifold $M$ is the restriction to time $t=1$ of an properly supported $r$-fibered distribution $\mathbb{P}$, essentially homogeneous for the Debord-Skandalis action, see Definition \ref{121}, modulo properly supported smooth sections of the density bundle on the $r$-fibers of $\TM$. This means:
$$ \qquad \forall ~ s \in \R_+^*, ~~ \alpha_{s^*} \mathbb{P} - s^m\mathbb{P} \in C_p^\infty(\TM,\Omega_r).$$
\end{theoreme}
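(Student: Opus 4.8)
The plan is to reduce the statement to a local computation in a coordinate chart, invoke Theorem \ref{2} with the trivial grading $\rho_1=\dots=\rho_d=1$, and then reassemble the local pieces on the tangent groupoid. Since $P$ is properly supported, its Schwartz kernel is smooth and properly supported away from the diagonal, so modulo such a term one may work in a chart $U\subset\R^d$ where $P=\mathrm{Op}(a)$ up to a smoothing operator, for some $a\in S_{phg,G}^m(U\times\R^d)$. Theorem \ref{2} then furnishes $u\in\mathcal{H}\mathcal{S}_G^m(U\times\R^d\times\R)$ with $u(x,\xi,1)=a(x,\xi)$, and this $u$ will play the role of the ``full symbol in the groupoid direction''.

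Next I would build the candidate $r$-fibered distribution. Recall that $\mathbb{T} U=(U\times U\times\R^*)\cup(TU\times\{0\})$ carries the Debord--Skandalis smooth structure given near $t=0$ by the chart $(x,v,t)\mapsto(x,x-tv,t)$ for $t\neq0$ and $(x,v,0)$ for $t=0$. In these coordinates I would \emph{define} $\mathbb{P}\in\mathcal{E}_r'(\mathbb{T} U)$ by prescribing its fibrewise Fourier transform (Fourier transform in $v$ along the $r$-fibers, the exact analogue of the $\mathcal{F}_2$ of Definition \ref{112} and Theorem \ref{108}) to be $u$, namely $\mathcal{F}_2(\mathbb{P})(x,\xi,t)=u(x,\xi,t)$. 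For $t\neq0$ a linear change of variables identifies this with the Schwartz kernel of $\mathrm{Op}$ applied to the rescaled symbol $\xi\mapsto u(x,\xi,t)$, so the slice $t=1$ recovers $k$ modulo smoothing, while the slice $t=0$ gives the convolution operator on the osculating group $T_xM\cong\R^d$ with symbol $u(x,\xi,0)$, i.e. the principal part of $P$. The essential homogeneity is then immediate on the symbol side: the zoom action $\alpha_s$ of Definition \ref{121} is intertwined, under $\mathcal{F}_2$ and this chart, with the dilation $\tilde{\delta}_s(\xi,t)=(s\xi,st)$ of $\R^{d+1}$, so $\alpha_{s^*}\mathbb{P}-s^m\mathbb{P}$ has fibrewise Fourier transform $(x,\xi,t)\mapsto u(x,s\xi,st)-s^m u(x,\xi,t)$, which by Definition \ref{10} lies in $\mathcal{S}_G(U\times\R^{d+1})$; a symbol that is Schwartz in the whole variable $(\xi,t)$ inverse-Fourier-transforms to a smooth, properly supported section of $\Omega_r$, exactly the allowed remainder.

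The step I expect to be the main obstacle is verifying that the field $\{k_t\}_{t\neq0}$ genuinely glues with $k_0$ into a single distribution on $\mathbb{T} U$ that is smooth across $t=0$, so that $\mathbb{P}\in\mathcal{E}_r'(\mathbb{T} U)$ in the first place. Here I would use the structure Theorem \ref{1}: away from a compact set in $(\xi,t)$ write $u=u'+u''$ with $u'$ homogeneous on the nose for $\tilde{\delta}_s$ and $u''\in\mathcal{S}_G$. The part $u''$ contributes a genuine smooth section of $\Omega_r$, while the homogeneous part $u'$, under $\mathcal{F}_2^{-1}$ and the groupoid chart, produces precisely the ``conical'' $r$-fibered distribution whose smoothness across $t=0$ is built into the smooth structure of the tangent groupoid — the classical mechanism by which $\mathbb{T} M$ detects pseudodifferential operators — and checking it is a careful change-of-variables estimate using Lemma \ref{15} for the $\xi$-derivatives. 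Finally I would glue the local distributions $\mathbb{P}_\alpha$ with a locally finite partition of unity $(\varphi_\alpha)$ on $M$ pulled back along the range map, noting that on overlapping charts the two constructions differ by a smooth section of $\Omega_r$ (a change of coordinates acts on symbols by an operation preserving $\mathcal{H}\mathcal{S}_G^m$ modulo $\mathcal{S}_G$), arranging the cutoffs so that $\supp\mathbb{P}$ is proper for both $r$ and $s$, and adding a compactly supported correction supported near $t=1$ to remove the residual smoothing discrepancy, so that $\mathbb{P}|_{t=1}=k$ exactly.
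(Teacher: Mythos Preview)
The paper does not actually give a proof of Theorem \ref{141}: it is stated as a result of Van Erp and the second author from \cite{Yuncken2019groupoidapproach}, cited as background and motivation for Definition \ref{144}. So there is no ``paper's own proof'' to compare against in the strict sense.

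That said, your outline is correct and is essentially the trivially filtered specialisation of the paper's proof of Theorem \ref{157}. In that proof, for the inclusion $\bold{\Psi}^m_{\mathcal{V}}(M)\subset\bold{\Psi}^m(\THM)|_{t=1}$, the paper does exactly what you propose: apply Theorem \ref{2} to the symbol $f$ to obtain $u\in\mathcal{H}\mathcal{S}_G^m$, set $\kgt=\mathcal{F}_2^{-1}(u)$ and $\kg=\Exp_*(\kgt)$, multiply by a cutoff $\chi$ proper along the $r$-fibers to obtain $\kg_0$ (equation \eqref{131}), and then check essential homogeneity via the splitting \eqref{132} together with the intertwining relation of Proposition \ref{123}. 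When the filtration is trivial one has $b_{jk}=0$, $\overline{\sigma}=\mathrm{id}$, and $\Exp(y,v,t)=(y,y-tv,t)$, which is precisely your chart, so your argument and the paper's coincide.

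Two small points where the paper is more economical than your sketch. First, your ``main obstacle'' (smoothness of $\kgt$ across $t=0$) is dispatched in one line by Lemma \ref{124}, which already packages the decomposition from Theorem \ref{1} you intended to use. Second, the paper does not add a correction supported near $t=1$; it simply observes that $(\kg-\kg_0)|_{t=1}$ has smooth kernel and hence lies in $\Psi^{-\infty}(M)\subset\bold{\Psi}^m(\THM)|_{t=1}$, so $k=\kg_0|_{t=1}+(\kg-\kg_0)|_{t=1}$ already exhibits $k$ in the groupoid calculus. Your partition-of-unity globalisation is the standard way to pass from $U$ to $M$ and is implicit in the locality discussion preceding Theorem \ref{foliations}.
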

This theorem and its converse, which show that the operators of the calculus of vEY, see Definition \ref{144} below, are precisely the classical polyhomogeneous pseudodifferential operators, were the anchorage point from where vEY defined their pseudodifferential calculus on a filtered manifold. Here, we will show the analogous result for the BG calculus on a model Heisenberg manifold $M$, see Theorem \ref{157}. The following definition is extracted from \cite{Yuncken2017tangentgroupoidfilteredmanifold}.
\begin{definition}  \label{144} 
Given $M$ a filtered manifold, an element $k \in \mathcal{E}_r'(M \times M)$ is called an $H$-pseudodifferential kernel of order $\leq m$ if $k=\mathbb{P}|_{t=1}$ for some $\mathbb{P} \in \mathcal{E}_r'(\THM)$ essentially homogeneous for the Debord-Skandalis action on the filtered tangent groupoid, see Definition \ref{121}, modulo the properly supported smooth sections of the density bundle $\Omega_r$, meaning :
$$(\blacksquare)  \qquad \forall ~ s \in \R_+^*, ~~ \alpha_{s^*} \mathbb{P} - s^m\mathbb{P} \in C_p^\infty(\THM,\Omega_r).$$
We denote by $\bold{\Psi}^m(\THM)$ the set of essentially homogeneous properly supported $r$-fibered distribution on the filtered tangent groupoid $\mathbb{P} \in \mathcal{E}_r'(\THM)$ satisfying $(\blacksquare)$. 
The properly supported operators $P : C_c^\infty(M) \rightarrow D'(M)$ on a filtered manifold $M$ with such a Schwartz kernel $k$ are called  $H$-pseudodifferential operators. Such elements form  the set $\bold{\Psi}^m(\THM)|_{t=1}$.
\end{definition}
Morally, the $H$-tangent groupoid $\THM$ adds a supplementary dimension to the manifold, namely the $\R$-axis, to glue together the pair groupoid $M \times M$ (encoding the information of the kernel of a pseudodifferential operator) and the osculating groupoid (encoding the information of the co-symbol, that is the Fourier transform of the symbol of the pseudodifferential operator). This is analogous to the extra dimension which appeared in Theorem \ref{2}. 

In order to prove the equivalence of the calculi of BG and vEY, the remark after Theorem \ref{108} indicates that we need to consider the relation between the following two actions of $\R_+^*$ on $M \times \R^{d+1} \times \R$:
\begin{equation} \label{122}
\beta_s : M \times \R^{d+1} \times \R  \rightarrow M \times \R^{d+1} \times \R ,(x,v,t) \mapsto (x,\delta_s(v),st)=(x,\widetilde{\delta}_s(v,t)).
\end{equation}
\begin{equation} \label{142}
\widetilde{\alpha}_{s}:=(\Exp)^{-1} \circ \alpha_s \circ \Exp : M \times \R^{d+1} \times \R \rightarrow M \times \R^{d+1} \times \R,
\end{equation}
where $\alpha_s$ denotes the Debord-Skandalis action on $\mathbb{T}_H M$, see Definition \ref{121}.
The Fourier transform intertwines these two actions in the following way, see section 7.3 in \cite{Yuncken2019groupoidapproach}.
\begin{proposition} \label{123}
One can show that
$$\widetilde{\alpha}_{s}(x,v,t)=(x,\delta_s(v),s^{-1}t).$$ 
Moreover, the Fourier transform in the second variable $\mathcal{F}_2$ intertwines the actions $\beta_s$ and $\widetilde{\alpha}_{s}$,
$$\mathcal{F}_2 \circ \widetilde{\alpha}_{s*} \circ \mathcal{F}_2^{-1} = \beta_s^*.$$
\end{proposition}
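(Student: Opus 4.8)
The plan is to compute $\widetilde\alpha_s=(\Exp)^{-1}\circ\alpha_s\circ\Exp$ explicitly, and then to obtain the intertwining from the behaviour of $\mathcal{F}_2$ under a linear change of variable in the second slot, together with the $r$-fibre density structure.

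First I would determine $\widetilde\alpha_s$. Since $\Exp$ is a diffeomorphism of $M\times\R^{d+1}\times\R$ onto $\THM$, it suffices, given $(y,v,t)$, to exhibit the (necessarily unique) $\Exp$-preimage of $\alpha_s(\Exp(y,v,t))$. For $t\neq0$ we have $\Exp(y,v,t)=(y,\exp(\delta_t(-v).\overline{X}).y,t)$, so by Definition \ref{121}, $\alpha_s(\Exp(y,v,t))=(y,\exp(\delta_t(-v).\overline{X}).y,s^{-1}t)$. Because $(\delta_s)$ is a one-parameter group of linear maps, $\delta_a\circ\delta_b=\delta_{ab}$, whence
\[
\delta_{s^{-1}t}\bigl(-\delta_s v\bigr)=\delta_{s^{-1}t}\,\delta_s(-v)=\delta_t(-v),
\]
so that $\exp\bigl(\delta_{s^{-1}t}(-\delta_s v).\overline{X}\bigr).y=\exp(\delta_t(-v).\overline{X}).y$; that is, $\Exp(y,\delta_s v,s^{-1}t)=\alpha_s(\Exp(y,v,t))$, giving $\widetilde\alpha_s(y,v,t)=(y,\delta_s v,s^{-1}t)$. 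For $t=0$ we have $\Exp(y,v,0)=(y,v.\overline{X}|_y,0)$, and in the frame $\overline{X}$ the graded dilation of the osculating fibre $\mathcal{T}_HM_y$ is exactly the Heisenberg dilation $\delta_s$ of $\R^{d+1}$ ($X_0$ has order $2$, the $X_j$ with $j\neq0$ have order $1$); hence $\alpha_s(\Exp(y,v,0))=(y,(\delta_s v).\overline{X}|_y,0)=\Exp(y,\delta_s v,0)$, and since $s^{-1}\cdot0=0$ both cases give $\widetilde\alpha_s(x,v,t)=(x,\delta_s v,s^{-1}t)$.

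Then I would deduce the intertwining. Writing the formula as $\widetilde\alpha_s(y,\cdot\,,st)=(y,\delta_s(\cdot),t)$ shows that $\widetilde\alpha_s$ carries the $r$-fibre over $(y,st)$ onto the $r$-fibre over $(y,t)$ by the linear automorphism $\delta_s$ of $\R^{d+1}$. Hence $\widetilde\alpha_{s*}\mathbb{P}$, restricted to the base point $(y,t)$, is the $\delta_s$-pushforward of the fibrewise distribution-density that $\mathbb{P}$ assigns to $(y,st)$. Applying $\mathcal{F}_2$ and using the identity $\mathcal{F}\circ\phi_*={}^t\phi^*\circ\mathcal{F}$ for a linear map $\phi$ (recalled in the proof of Theorem \ref{108}) with $\phi=\delta_s$, which is diagonal so ${}^t\delta_s=\delta_s$, the fibrewise pushforward by $\delta_s$ turns into pullback by $\xi\mapsto\delta_s\xi$ — the $1$-density Jacobian cancelling against the Jacobian of the substitution in the Fourier integral — while the base parameters undergo the shift $(y,t)\mapsto(y,st)$. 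Altogether, for any symbol $g$,
\[
\bigl(\mathcal{F}_2\circ\widetilde\alpha_{s*}\circ\mathcal{F}_2^{-1}\bigr)g\,(y,\xi,t)=g(y,\delta_s\xi,st)=\beta_s^*g(y,\xi,t),
\]
which is the asserted relation; the careful bookkeeping of the $r$-fibre densities is exactly the one performed in \cite[section~7.3]{Yuncken2019groupoidapproach}.

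The main obstacle is the case $t\neq0$ in the first step: one must recognise that the flow identity reduces, via the group law $\delta_a\delta_b=\delta_{ab}$, to a purely algebraic statement, and one must keep track of the fact that the Debord--Skandalis action \emph{inverts} the time parameter ($t\mapsto s^{-1}t$) rather than dilating it. This inversion is precisely why $\mathcal{F}_2$ conjugates the pushforward $\widetilde\alpha_{s*}$ into the pullback $\beta_s^*$, and not into a pushforward.
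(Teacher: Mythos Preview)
Your proof is correct. The paper itself gives no proof of Proposition~\ref{123}; it merely refers the reader to \cite[section~7.3]{Yuncken2019groupoidapproach}. Your argument supplies exactly the details that the paper omits: the first part is the direct computation using the semigroup property $\delta_a\circ\delta_b=\delta_{ab}$ of the graded dilations (which is indeed the only thing needed, together with the definition of $\alpha_s$), and the second part is the standard identity $\mathcal{F}\circ\phi_*={}^t\phi^*\circ\mathcal{F}$ applied fibrewise with $\phi=\delta_s={}^t\delta_s$, combined with the base-point shift $t\mapsto st$. There is nothing to compare against in the paper proper, and your write-up is faithful to the argument one finds in the cited reference.
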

We will need the following definition.

\begin{definition} \label{146} 
Given $U$ an open subset of $\R^n$. We set:
$$\mathcal{S}_G'(U \times \R^d):=\Hom_{C^\infty(U)}(S_G(U \times \R^d),C^\infty(U)) \cong C^\infty(U,S'(\R^{d})),$$
where $\Hom_{C^\infty(U)}$ denotes linearity with respect to $C^\infty(U)$.
\end{definition}

\begin{lemma} \label{124}
Given $U$ an open subset of $\R^n$ and $u \in \mathcal{H} \mathcal{S}^m( U \times \R^d)$, we have:
$$\mathcal{F}^{-1}(v \mapsto u(x,v)) \in C^\infty(U \times (\R^d \setminus \{ 0 \}) ) \cap \mathcal{S}_G'(U \times \R^d).$$
\end{lemma}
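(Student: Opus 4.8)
The plan is to peel $u$ apart using Theorem \ref{1} into a part that is compactly supported in the second variable, a part that is Schwartz in the second variable, and a genuinely homogeneous part, and to treat the two assertions of the lemma (smoothness off the zero section, and membership in $\mathcal{S}_G'$) for each piece separately. Concretely, fix a compact neighbourhood $K$ of $0$ in $\R^d$ and write $u = u' + u''$ on $U \times K^c$ with $u' \in \mathcal{H}_G^m(U \times \R^d)$ and $u'' \in \mathcal{S}_G(U \times \R^d)$ (Theorem \ref{1}). Pick $\chi \in C^\infty(\R^d)$ vanishing on $K$ and equal to $1$ outside a larger compact set, as in Lemma \ref{42}. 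Then $u = \chi u' + \chi u'' + (1-\chi)u$ on all of $U \times \R^d$, where $\chi u'$ extends smoothly by $0$ across $v = 0$, $\chi u'' \in \mathcal{S}_G(U \times \R^d)$, and $(1-\chi)u$ is smooth with support compact in $v$, locally uniformly in $x$.

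Applying $\mathcal{F}_2^{-1} = \mathcal{F}^{-1}(v \mapsto \cdot)$, the Schwartz part gives $\mathcal{F}_2^{-1}(\chi u'') \in \mathcal{S}_G(U \times \R^d) \subset C^\infty(U \times \R^d)$, and the compactly supported part gives $\mathcal{F}_2^{-1}((1-\chi)u) \in C^\infty(U \times \R^d)$ (entire in $\xi$ by Paley--Wiener, smoothly in $x$ by differentiation under the integral over a compact set). So both claims are reduced to the single term $g := \chi u'$. For the distributional claim, note that by Proposition \ref{92} one has $\chi u' \in S_G^m(U \times \R^d)$, and since $\chi u'' \in \mathcal{S}_G \subset S_G^m$ and $(1-\chi)u \in S_G^m$ as well, $u \in S_G^m(U \times \R^d)$; hence $u$ defines an element of $\mathcal{S}_G'(U \times \R^d) \cong C^\infty(U, \mathcal{S}'(\R^d))$ by integration against test functions, with the $x$-regularity following from local uniformity of the symbol estimates. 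As $\mathcal{F}^{-1}$ maps $\mathcal{S}_G(U \times \R^d)$ to itself, it maps $\mathcal{S}_G'(U \times \R^d)$ to itself by duality (equivalently it acts fibrewise on $C^\infty(U, \mathcal{S}'(\R^d))$), so $\mathcal{F}_2^{-1}(v \mapsto u(x,v)) \in \mathcal{S}_G'(U \times \R^d)$.

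The heart of the matter — and the step I expect to be the main obstacle — is showing $\mathcal{F}_2^{-1}(g) \in C^\infty(U \times (\R^d \setminus \{0\}))$; this is the graded-dilations version of the classical fact that the Fourier transform of a homogeneous function is smooth away from the origin. Let $Q := \rho_1 + \dots + \rho_d$ be the homogeneous dimension. Since $\chi$ does not depend on $x$, one has $D_x^\gamma g = \chi\,D_x^\gamma u'$ with $D_x^\gamma u' \in \mathcal{H}_G^m(U \times \R^d)$, and for any $\xi$-multi-index $\beta$,
\[
D_x^\gamma D_\xi^\beta \mathcal{F}_2^{-1}(g)(x,\xi) = c_\beta\, \mathcal{F}_2^{-1}\bigl(v \mapsto v^\beta \chi(v) D_x^\gamma u'(x,v)\bigr)(x,\xi)
\]
as distributions in $(x,\xi)$, the function $v^\beta \chi D_x^\gamma u'$ being smooth, vanishing near $v = 0$, and graded-homogeneous of order $m + |\beta|$ for $|v|$ large (Lemma \ref{15}, with $|\beta| = \rho\cdot\beta$). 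Now fix $i$ with $\xi_i \neq 0$ and apply $\partial_{v_i}^N$: up to a constant this multiplies the left side by $\xi_i^N$, and the resulting $v$-function is (modulo Leibniz terms in $\partial_{v_i}^k\chi$, $k \geq 1$, which are compactly supported in $v$ and hence harmless) graded-homogeneous of order $m + |\beta| - N\rho_i$ for $|v|$ large. Choosing $N$ large enough that this order is $< -Q$ makes it $L^1$ in $v$ — this uses a polar-coordinate decomposition for the dilations $\delta_s$, whose Jacobian scales by $s^Q$, as in \cite{FischerRuzhansky2016Quantization} — locally uniformly in $x$ and with continuous $x$-dependence, so that $\xi_i^N D_x^\gamma D_\xi^\beta \mathcal{F}_2^{-1}(g)$ is a jointly continuous function of $(x,\xi)$. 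Dividing by $\xi_i^N$ on the open set $\{\xi_i \neq 0\}$ shows that there every distributional derivative $D_x^\gamma D_\xi^\beta \mathcal{F}_2^{-1}(g)$ agrees with a continuous function, hence $\mathcal{F}_2^{-1}(g)$ is $C^\infty$ on $\{\xi_i \neq 0\}$; covering $\R^d \setminus \{0\}$ by the sets $\{\xi_i \neq 0\}$, $i = 1,\dots,d$, completes the proof. The only genuine technical care lies in the integrability estimate for graded-homogeneous functions of order below $-Q$, the bookkeeping of the Leibniz terms where $\chi$ is non-constant, and the justification of differentiation under the integral sign in the parameters $(x,\xi)$.
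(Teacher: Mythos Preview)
Your proof is correct and follows exactly the same decomposition as the paper's own proof: split $u$ via Theorem \ref{1} into a truncated homogeneous piece $\chi u'$ and a Schwartz remainder, and treat each separately. The paper's proof is in fact much terser than yours --- it simply declares the result for the term $\chi u'$ to be ``well-known'' and stops there --- whereas you actually supply that argument (integration by parts in $v_i$ to gain decay, then integrability of graded-homogeneous functions of order below $-Q$), which is correct and is the standard justification.
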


\begin{proof}
If $u $ is of the form:
$$u(x,v)=\chi(v)u'(x,v),$$
where $u' \in \mathcal{H}^m(U \times \R^d)$ and $\chi \in C_c^\infty(\R^d)$ is a smooth cut-off function which is 0 on a compact neighbourhood of 0 and 1 near infinity, the result is well-known.
More generally, thanks to Theorem \ref{1}, one can write the now familiar decomposition for any compact $K$ containing 0:
\begin{equation} \label{126}
u:=\chi_K u' + u'' ,
\end{equation}
where, $\chi_K \in C_c^\infty(\R^d)$, $u' \in \mathcal{H}^m(U \times \R^{d} )$ and $u'' \in \mathcal{S}_G(U \times \R^{d})$. The result follows.
 \end{proof}

\begin{theoreme} \label{157}
Let $M=\R^{d+1}$ with the model Heisenberg structure from \eqref{101}. Then

\begin{equation}
\bold{\Psi}^m_{\mathcal{V}}(M)=\bold{\Psi}^m(\THM)|_{t=1}.
\end{equation}

\end{theoreme}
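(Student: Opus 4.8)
The plan is to translate both sides into the language of \emph{cosymbols} on $M \times \R^{d+1} \times \R$, using the global chart $\Exp$ of $\THM$ together with a partial Fourier transform, and then to recognise the passage between $t=1$ and general $t$ as exactly the content of Theorems \ref{2} and \ref{108}. For an $r$-fibered distribution $\mathbb{P}$ on $\THM$ I set $\mathcal{T}(\mathbb{P}) := \mathcal{F}_2\big((\Exp)_*^{-1}\mathbb{P}\big)$, a distribution on $M \times \R^{d+1} \times \R$ depending $C^\infty(M \times \R)$-linearly on the covariable.

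The first step is a dictionary lemma: $\mathbb{P} \in \bold{\Psi}^m(\THM)$ if and only if $\mathcal{T}(\mathbb{P}) \in \mathcal{H}\mathcal{S}_G^m(M \times \R^{d+1} \times \R)$, where $\R^{d+1} \times \R$ carries the graded dilations $\widetilde{\delta}_s$ built from the Heisenberg dilations $\delta_s$ of Definition \ref{103}. I would prove this by transporting the essential homogeneity condition $(\blacksquare)$ through $\Exp$ into a condition for $\widetilde{\alpha}_s$ and then, via Proposition \ref{123}, into $\beta_s^*\mathcal{T}(\mathbb{P}) - s^m\mathcal{T}(\mathbb{P}) \in \mathcal{S}_G(M \times \R^{d+1} \times \R)$, which is precisely Definition \ref{10}; for the ``if'' direction Lemma \ref{124} makes $\mathcal{F}_2^{-1}$ of a homogeneous-modulo-Schwartz function into a legitimate $r$-fibered distribution on the chart. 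One also records that $\mathcal{T}$ commutes with restriction to $t=1$, so that $\mathbb{P}|_{t=1} = k$ becomes $\mathcal{F}_2\big((\Exp|_{t=1})_*^{-1}k\big) = \mathcal{T}(\mathbb{P})|_{t=1}$.

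Granting the dictionary, both inclusions are short. For $\bold{\Psi}^m_{\mathcal{V}}(M) \subseteq \bold{\Psi}^m(\THM)|_{t=1}$: given $P = Op(q)$ with $q = \overline{\sigma}^*f$, $f \in S_{phg,G}^m(M \times \R^{d+1})$, and kernel $k$, Theorem \ref{2} produces $u \in \mathcal{H}\mathcal{S}_G^m(M \times \R^{d+1} \times \R)$ with $u|_{t=1} = f$; the dictionary gives $\mathbb{P} \in \bold{\Psi}^m(\THM)$ with $\mathcal{T}(\mathbb{P}) = u$, so $\mathcal{F}_2\big((\Exp|_{t=1})_*^{-1}(\mathbb{P}|_{t=1})\big) = f$. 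But Theorem \ref{108} also gives $\mathcal{F}_2\big((\Exp|_{t=1})_*^{-1}k\big) = f$, and since $\mathcal{F}_2 \circ (\Exp|_{t=1})_*^{-1}$ is injective we get $\mathbb{P}|_{t=1} = k$, i.e. $P \in \bold{\Psi}^m(\THM)|_{t=1}$. For the reverse inclusion: given $\mathbb{P} \in \bold{\Psi}^m(\THM)$ with $\mathbb{P}|_{t=1} = k$, set $u := \mathcal{T}(\mathbb{P}) \in \mathcal{H}\mathcal{S}_G^m(M \times \R^{d+1} \times \R)$ and $f := u|_{t=1}$, which is smooth; by the implication $2) \Rightarrow 1)$ of Theorem \ref{2}, $f \in S_{phg,G}^m(M \times \R^{d+1})$, and putting $q := \overline{\sigma}^*f$, Theorem \ref{108} shows the kernel $k'$ of $Op(q)$ satisfies $\mathcal{F}_2\big((\Exp|_{t=1})_*^{-1}k'\big) = f = \mathcal{F}_2\big((\Exp|_{t=1})_*^{-1}k\big)$, whence $k' = k$ and $P \in \bold{\Psi}^m_{\mathcal{V}}(M)$.

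The hard part will be the dictionary lemma, and within it two points. First, the implication ``$\mathbb{P} \in \bold{\Psi}^m(\THM) \Rightarrow \mathcal{T}(\mathbb{P})$ is genuinely smooth, not merely an $r$-fibered distribution'': an essentially homogeneous $r$-fibered distribution has a smooth cosymbol. This is the Heisenberg-model analogue of the argument behind Theorem \ref{141} in \cite{Yuncken2019groupoidapproach}, it is where the real analytic work of the groupoid side sits, and it is used crucially in the reverse inclusion. Second, the precise matching of function spaces: identifying $C_p^\infty(\THM,\Omega_r)$, transported through $\Exp$ and partially Fourier-transformed, with $\mathcal{S}_G(M \times \R^{d+1} \times \R)$, and reconciling ``properly supported'' on the groupoid side with the rapid off-diagonal decay of the kernel of $Op(q)$ on the Beals--Greiner side; this I would handle, as in \cite{Yuncken2019groupoidapproach}, by working modulo smoothing operators, which lie in both classes. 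The remaining verifications (that $\Exp_*$ preserves $r$-fibredness, that $\mathcal{T}$ commutes with $t = 1$ restriction, injectivity of $\mathcal{F}_2 \circ (\Exp|_{t=1})_*^{-1}$) are routine.
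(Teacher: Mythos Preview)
Your strategy is the same as the paper's: transport everything to the cosymbol side via $\mathcal{F}_2\circ(\Exp)_*^{-1}$, then invoke Theorems \ref{2} and \ref{108}. The paper, however, does not attempt to package this as a clean if-and-only-if dictionary lemma, because such a lemma cannot be made literally true; instead it proves the two inclusions separately, each with its own repair.

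The obstruction is exactly the function-space mismatch you flag at the end, and it is not a side issue. The transform $\mathcal{F}_2\circ(\Exp)_*^{-1}$ does \emph{not} carry $C_p^\infty(\THM,\Omega_r)$ onto $\mathcal{S}_G(M\times\R^{d+1}\times\R)$: a properly supported smooth density has compact support in each $r$-fibre, so its partial Fourier transform is Schwartz in $\xi$ locally uniformly in $(y,t)$, but there is no decay whatsoever in $t$. Conversely, $\Exp_*\circ\mathcal{F}_2^{-1}$ of an element of $\mathcal{H}\mathcal{S}_G^m$ is not properly supported. The paper handles these two failures asymmetrically. In the forward inclusion it multiplies $\kgt=\mathcal{F}_2^{-1}(u)$ by a cutoff $\chi\in C_p^\infty$ equal to $1$ on $M\times\{0\}\times\R$; then $\kgt_0=\chi\kgt$ is properly supported, $\kgt-\kgt_0=(1-\chi)\kgt$ is smooth by Lemma \ref{124}, and the essential-homogeneity defect of $\kgt_0$ splits as a $(\spadesuit)+(\blacksquare)$ decomposition with both pieces smooth. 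In the reverse inclusion it invokes Proposition~42 of \cite{Yuncken2019groupoidapproach} to replace $\kg$ by a representative homogeneous \emph{on the nose} for $|t|>1$; then $\alpha_{s*}\kg-s^m\kg$ vanishes outside $|t|\le 1$, which supplies the missing $t$-decay and makes $\beta_s^*u-s^mu$ genuinely Schwartz in $(\xi,t)$. Without this normalization your conclusion $u\in\mathcal{H}\mathcal{S}_G^m(M\times\R^{d+1}\times\R)$ does not follow from $(\blacksquare)$ alone. You should name both devices explicitly rather than defer them to ``working modulo smoothing operators''.
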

\begin{proof}
We start with the inclusion $\bold{\Psi}^m_{\mathcal{V}}(M) \subset \bold{\Psi}^m(\THM)|_{t=1}$, where we remind the reader that the last set is defined in Definition \ref{144}.

Let $Op(q) \in \bold{\Psi}^m_{\mathcal{V}}(M)$ be a pseudodifferential operator in the BG calculus, where its symbol $q=\overline{\sigma}^*f$ and its Schwartz kernel $k$ are as in  Definition \ref{112}. Let $u \in \mathcal{H} \mathcal{S}^m(M \times \R^{d+1} \times \R)$ be the extension of $f$ as mentioned in the remark after Theorem \ref{108}. Put  $\kgt(y,\xi,t)=\mathcal{F}^{-1} \Big(v \mapsto u(y,v,t) \Big)(\xi)$ and $\kg=\Exp_*(\kgt)$. By Lemma \ref{124}, we have
\begin{equation}
\kgt \in C^\infty(M \times \R,  \mathcal{S}'( \R^{d+1})).
\end{equation}
Note also that  $\kgt|_{t=1}=\mathcal{F}_2^{-1}(u|_{t=1})=\mathcal{F}_2^{-1}(f)=\tilde{k}$, by Theorem \ref{108}. Using the exponential chart, this is  equivalent to $\kg|_{t=1}=k$.

Take a cut-off function $\chi \in C_p^\infty(M \times \R^{d+1} \times \R)$ which is equal to 1 on $M \times \{0 \} \times \R$. We define the following $r$-fibered distributions:
\begin{equation} \label{131}
\kgt_0:= \chi \kgt,~~ \kg_0=\Exp_*(\kgt_0).
\end{equation}
We are going to show that for all $s>0$:
\begin{equation} \label{133}
\tilde{\alpha}_{s^*} \kgt_0 - s^m \kgt_0 \in  C_p^\infty(M \times \R^{d+1} \times \R,\Omega_r).
\end{equation}
To do this, first compute the following for all $s>0$:
\begin{equation} \label{132}
\tilde{\alpha}_{s^*} \kgt_0 - s^m \kgt_0 =\underbrace{ \tilde{\alpha}_{s^*}(\kgt_0 - \kgt ) - s^m (\kgt_0 - \kgt)}_{(\spadesuit)}+  \underbrace{\tilde{\alpha}_{s^*}\kgt - s^m \kgt}_{ (\blacksquare)}.
\end{equation}
The term $(\spadesuit)$ is $C^\infty(M \times \R^{d+1} \times \R,\Omega_r)$ because of Lemma \ref{124} and of the calculation  $\kgt_0 - \kgt=(\chi -1  ) \kgt$. Now we deal with term $(\blacksquare)$. We have $\beta_s^*u - s^m u \in \mathcal{S}_G(M \times \R^{d+1} \times \R)$ for all $s>0$, by hypothesis on $u$.
Using Proposition \ref{123} this implies:
\begin{equation} \label{159}
\tilde{\alpha}_{s^*} \kgt - s^m \kgt \in \mathcal{S}_G(M \times \R^{d+1} \times \R) \subset C^\infty(M \times \R^{d+1} \times \R,\Omega_r).
\end{equation}
This prove that the term $(\blacksquare)$ on the right side of \eqref{132} is smooth and therefore so is the term on the left of \eqref{132}. Moreover $\kg_0$ is properly supported since $supp(\kgt_0) \subset supp(\chi)$. This proves \eqref{133}.

Finally, $( \kg - \kg_0 ) |_{t=1} \in  \Psi^{- \infty}(M)$ because this operator has a smooth kernel on $M \times M$. We therefore have
\begin{equation}
k=\kg_0|_{t=1}+ (\kg - \kg_0 ) |_{t=1},
\end{equation} 
which is what we needed to prove.
Now we prove the other inclusion $\bold{\Psi}^m(\THM)|_{t=1} \subset \bold{\Psi}^m_{\mathcal{V}}(M)$.

Let $P \in \bold{\Psi}^m(\THM)|_{t=1}$ with Schwartz kernel $k$ so that there exists $\kg \in \bold{\Psi}^m(\THM)$ that restricts at time $t=1$ to $P$.  Following Proposition 42 of \cite{Yuncken2019groupoidapproach} one may even choose $\kg$ to be homogeneous on the nose of order $m$ outside $[-1,1]$. 
 Define $\kgt=(\Exp)_*^{-1}(\kg)$ and $u=\mathcal{F}_2(\kgt)$.  We have that $\alpha_{s*} \kg - s^m \kg \in C_p^\infty(\THM,\Omega_r)$ and is $0$ outside $[-1,1]$. This implies that:
\begin{equation}
\tilde{\alpha_{s^*}} \kgt - s^m \kgt \in C_p^\infty(M \times \R^{d+1} \times \R,\Omega_r), ~~ and ~ is ~ 0 ~ outside ~ [-1,1].
\end{equation}
By Proposition \ref{123}, this leads to:
\begin{equation}
\beta_s^*u - s^m u \in C^\infty(M \times \R,\mathcal{S}(\R^{d+1})) , ~~ and ~ is ~ 0 ~ outside ~ [-1,1].
\end{equation}
Then the fact that this last expression is Schwartz in $\xi\in  \R^{d+1}$ and the condition on $t$ together imply that $u \in \mathcal{H} \mathcal{S}^m(M \times \R^{d+1} \times \R)$ and therefore, thanks to Theorem \ref{2}, $u|_{t=1} \in S_{phg,G}^m(M \times \R^{d+1})$. 
By the commuting diagramme following Theorem \ref{108}, we have:
\begin{align}
\mathcal{F}_2^{-1} \Big( \overline{\sigma}^{*} u|_{t=1}  \Big)(x,y) & =(\tau \circ m)_* k(x,y) \\
&= k(x,x-y),
\end{align}
so:
\begin{equation}
\mathcal{F}_2^{-1} \Big( \overline{\sigma}^{*} u|_{t=1}  \Big)(x,x-y)=k(x,y).
\end{equation}
By Definition \ref{112} we are done by putting $q(x,\xi):=\overline{\sigma}^{*} u|_{t=1}(x,\xi)$.

\end{proof}


\subsection{Application to contact manifolds and foliations}

Theorem \ref{157} can be easily extended to contact manifolds and foliations of codimension 1 which are both examples of Heisenberg manifolds. Note that CR-manifolds, appearing in the papers \cite{folland1974estimatesHeisenberg}, \cite{folland1973fundamentalsolutionsubOp} of Folland-Stein and in other papers from the 70's concerning the $\overline{\partial_b}$ Von-Neumann problem (\cite{folland1972tangentialCR}, \cite{folland2016NeumannPbCRcomplex}), are particular cases of contact manifolds. Such applications strongly motivate the desire of exploring and understanding the Heisenberg calculus and subelliptic operators on such manifolds. 

Recall that a continuous linear operator $P$ on $\mathcal{E}'(M)$ is called \emph{pseudolocal} if $\mathrm{singsupp}(Pf) \subseteq \mathrm{singsupp}(f)$ for every $f\in\mathcal{E}'(M)$, where $suppsing$ denotes the singular support of a distribution.  
The two definitions of the Heisenberg calculus in  \cite{Beals2016Heisenbergcalculus} and \cite{Yuncken2019groupoidapproach} are local in the following sense: a continuous linear operator $P$ on $\mathcal{E}'(M)$ is a Heisenberg pseudodifferential operator (in the sense of either \cite{Beals2016Heisenbergcalculus} or \cite{Yuncken2019groupoidapproach}) if and only if it is pseudolocal and for any open set $U$ and smooth function $\chi$ supported in $U$, the operator $M_\chi \circ P \circ M_\chi$ is a Heisenberg pseudodifferential operator on $U \subset M$ (in the sense of \cite{Beals2016Heisenbergcalculus} or \cite{Yuncken2019groupoidapproach}, respectively).  Here, $M_\chi$ denotes the operator of multiplication by $\chi$.  

Let $M$ be a contact manifold of dimension $2d+1$, that is a smooth manifold with a 1-form $\omega \in \Omega^1(M)$ such that $w \wedge \underbrace{d \omega \wedge ... \wedge d \omega}_{2d ~ times}$ is nowhere zero. We write $\mathcal{V}=\mathrm{ker}(\omega)$ for the contact hyperplane bundle. By Darboux's Theorem \cite[Appendix 4]{Arnold1978mechanics}, around any point $p\in M$ we can find an open neighbourhood $U$ and a contactomorphism $\varphi: U \to \mathbb{H}_d$, to the Heisenberg group with its standard contact structure. This means that around any point $p \in M$ and for any vector field $X \in \Gamma^\infty(\mathcal{V})$ :
\begin{equation}
\varphi_*(X)=\sum_{i=1}^{2d} f_i X_i,
\end{equation}
where $X_0,...,X_{2d}$ are the model fields for $\mathbb{H}_d$ and $f_i$ are smooth functions on $\R^{2d+1}$.

Let $P$ be a pseudolocal operator on $\mathcal{E}'(M)$.   Let $\chi$ be a bump function at $p$ with support in $U$.  Using Theorem \ref{157}, the push-forward of $M_\chi \circ P \circ M_\chi$ to   $\mathbb{H}_d$ via $\varphi$ is in the Beals-Greiner calculus if and only if it is in the groupoid calculus.  Since $p$ was arbitrary, we obtain Theorem \ref{contact_manifold}.

The same general argument applies to foliations.  If $M$ is a $d+1$-dimensional manifold equipped with a codimension $1$ foliation, then at every point $p$ we can find a foliation chart $\varphi : U\to \R^{d+1}$ from a neighbourhood $U$ of $p$ to the abelian group $\R^{d+1}$ with its standard folation.  Applying the same reasoning as above, we obtain the following theorem.

\begin{theoreme}
\label{foliations}
Let $M$ be a manifold equipped with a codimension $1$ foliation.  Beals and Greiner's algebra of Heisenberg pseudodifferential operators on $M$ \cite{Beals2016Heisenbergcalculus} coincides with the algebra of pseudodifferential operators defined via the Heisenberg tangent groupoid of $M$ in \cite{Yuncken2019groupoidapproach}. That is, we again have:
\begin{equation}
\bold{\Psi}^m_{\mathcal{V}}(M)=\bold{\Psi}^m(\THM)|_{t=1}.
\end{equation}

\end{theoreme}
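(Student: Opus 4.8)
The plan is to transcribe the argument just used for contact manifolds (Theorem \ref{contact_manifold}), replacing Darboux's theorem by the existence of foliation charts. First I would invoke the locality of both calculi recalled above: a continuous linear operator $P$ on $\mathcal{E}'(M)$ lies in the Beals--Greiner calculus, resp. in the groupoid calculus of \cite{Yuncken2019groupoidapproach}, if and only if it is pseudolocal and, for every chart domain $U$ and every bump function $\chi$ supported in $U$, the operator $M_\chi \circ P \circ M_\chi$ lies in the corresponding calculus on $U$. Hence it suffices to prove the equality $\bold{\Psi}^m_{\mathcal{V}}(U) = \bold{\Psi}^m(\mathbb{T}_H U)|_{t=1}$ for $U$ ranging over the domains of a suitable atlas.

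Next, given $p \in M$, I would choose a foliation chart $\varphi : U \to \R^{d+1}$ around $p$, that is, a diffeomorphism from a neighbourhood of $p$ carrying the leaves of the codimension $1$ foliation to the affine hyperplanes of $\R^{d+1}$; such charts exist by the Frobenius theorem. Under $\varphi$ the hyperplane bundle $\mathcal{V} = T\mathcal{F}$ is carried to the span of the coordinate vector fields $\partial_{x_1}, \dots, \partial_{x_d}$, which is exactly the model hyperplane bundle of \eqref{101} in the degenerate case $B = 0$; then the group law \eqref{99} is ordinary addition, so the local model is the abelian group $\R^{d+1}$ with its standard foliation. Equivalently, $\mathcal{V}$ being integrable, its Levi bracket $\mathcal{V} \times \mathcal{V} \to TM/\mathcal{V}$ vanishes and the osculating groups are abelian.

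Then Theorem \ref{157}, applied with $B = 0$, gives $\bold{\Psi}^m_{\mathcal{V}}(\R^{d+1}) = \bold{\Psi}^m(\mathbb{T}_H\R^{d+1})|_{t=1}$. Since $\varphi$ is a foliated, hence filtered, diffeomorphism, it induces an isomorphism of filtered tangent groupoids intertwining the Debord--Skandalis actions and both pseudodifferential calculi; transporting the model identity along $\varphi^{-1}$ yields $\bold{\Psi}^m_{\mathcal{V}}(U) = \bold{\Psi}^m(\mathbb{T}_H U)|_{t=1}$. As $p$ was arbitrary, the locality principle of the first step assembles these local identifications into the global equality $\bold{\Psi}^m_{\mathcal{V}}(M) = \bold{\Psi}^m(\THM)|_{t=1}$, which is Theorem \ref{foliations}.

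I do not expect a genuine obstacle here: all the analytic weight rests in Theorem \ref{157} (and, upstream of it, in Theorem \ref{2}), while the reduction to the model uses only soft ingredients --- the Frobenius theorem and the functoriality of both calculi under filtered diffeomorphisms. The only point deserving explicit justification is the locality of the two calculi, but this is precisely the property stated before Theorem \ref{foliations} and already used in the contact case, so nothing new needs to be proved.
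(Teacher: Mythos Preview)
Your proposal is correct and follows essentially the same approach as the paper: reduce to local charts via the locality of both calculi, use foliation charts (Frobenius) to identify the local model with the abelian case $B=0$ of \eqref{101}, and invoke Theorem \ref{157}. The paper's own argument is in fact even more terse---it simply says ``the same general argument applies'' and points to the foliation chart---so your version spells out a few points (functoriality under filtered diffeomorphisms, vanishing Levi bracket) that the paper leaves implicit.
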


\bibliographystyle{plain}


\bibliography{bibarticle}

\end{document}